\documentclass{amsart}
\usepackage{amsmath,amsthm,amssymb}
\usepackage{tikz-cd}
\usepackage{thmtools}
\usepackage{enumitem}
\usepackage{quiver}
\usepackage[hidelinks]{hyperref}
\usepackage{accents}
\usepackage{enumitem}
\setlist{listparindent = \parindent, parsep=0pt,}
\setenumerate[1]{label = (\roman*), ref = (\roman*)}
\setenumerate[2]{label = (\alph*), ref = (\alph*)}
\usepackage[noabbrev,capitalise]{cleveref}
\crefformat{equation}{(#2#1#3)}
\crefformat{enumi}{#2#1#3}
\crefformat{enumii}{#2#1#3}

\title{Representation of abstract pseudogroups}
\author{J. L. Wrigley}
\thanks{
\paragraph{Acknowledgements}
I thank Mark Lawson for the warm encouragement to write this paper.  I acknowledge the support of Marie Sk{\l}odowska-Curie Grant No.\ 101273434.
}
\address[Joshua L. Wrigley]{Department of Mathematics and Statistics, Faculty of Sciences, Masaryk University, Kotlářská 2, 611 37 Brno, Czech Republic.}
\email{wrigley@math.muni.cz}
\urladdr{https://jlwrigley.github.io/}
\address{\includegraphics[height=1.0cm]{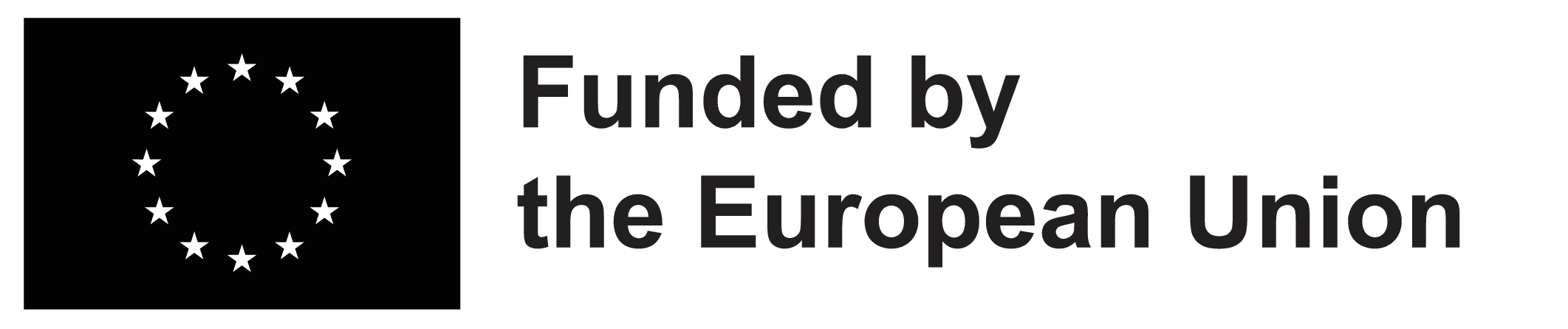}}
\theoremstyle{plain}

\newtheorem{thm}{Theorem}[section]
\newtheorem{lem}[thm]{Lemma}
\newtheorem{coro}[thm]{Corollary}
\newtheorem{prop}[thm]{Proposition}

\theoremstyle{definition}

\newtheorem{df}[thm]{Definition}

\newtheorem{rem}[thm]{Remark}

\newtheorem{ex}[thm]{Example}

\usepackage[
backend=biber,
sorting=nyt,
style=ieee
]{biblatex}
\renewcommand{\phi}{\varphi}

\newcommand{\id}{\text{id}}

\newcommand{\lrset}[2]{\left\{{#1}\,\middle\vert\,{#2}\right\}}
\newcommand{\sett}[2]{\{{#1}\,\vert\,{#2}\}}
\newcommand{\lrangle}[1]{\left\langle{#1}\right\rangle}

\newcommand{\Homeo}{\mathcal{H}}

\newcommand{\Idl}{\mathrm{Idl}}
\newcommand{\I}{\mathcal{I}}
\renewcommand{\H}{\mathcal{H}}
\renewcommand{\P}{\mathcal{P}}
\newcommand{\opens}{\mathcal{O}}
\newcommand{\src}{\mathsf{src}}
\newcommand{\tgt}{\mathsf{tgt}}
\newcommand{\nbhd}[1]{{{#1}}^\circ}
\newcommand{\eqr}[1]{\overline{{#1}}}
\newcommand{\2}{\mathbf{2}}

\usepackage[a4paper,
left=1.5in,
right=1.5in,
]{geometry}
\begin{document}
	
	\begin{abstract}
		The Wagner-Preston theorem states that every inverse semigroup can be embedded into an inverse semigroup of partial bijections on some set.  However, this embedding does not respect lattice operations in the natural ordering.  In this paper, we show that an abstract pseudogroup can be faithfully represented by partial bijections on a set if and only if its frame of idempotents is spatial.  More generally, we show that any abstract pseudogroup can be faithfully represented by partial homeomorphisms on a locale.
		
		\smallskip
		\noindent \textsc{\keywordsname.}{
			Pseudogroup, pseudogroup of transformations, inverse semigroup, Wagner-Preston theorem.
		}
		
		\smallskip
		\noindent \textsc{\subjclassname.}{
			20M18, 20M20, 20M30, 06D22, 18F70.
		}
	\end{abstract}
	\maketitle
	\section{Introduction}
	If group theory is the study of total symmetries, then inverse semigroup theory is the study of partial symmetries.  This slogan for inverse semigroup theory is justified by the Wagner-Preston theorem, proved independently on either side of the Iron Curtain in \cite{wagner} and \cite{preston}, which asserts that every inverse semigroup can be faithfully represented by partial bijections.  In construction and consequence, the Wagner-Preston theorem resembles Cayley's theorem for groups.
	
	However, the Wagner-Preston representation does not preserve many of the derived operations an inverse semigroup might possess.  These additional operations are natural ones to include when reasoning about partial symmetries, and formalise the following ways to combine partial symmetries:
	\begin{enumerate}
		\item The identity transformation is a partial symmetry, as is the partial symmetry whose domain is empty;
		\item Given a pair pair of partial symmetries $\alpha$ and $\beta$ with domains $U$ and $V$, then by restricting to the subset of $U \cap V$ on which $\alpha$ and $\beta$ agree, we obtain a new partial symmetry $\alpha \cap \beta$ that is both a restriction of $\alpha$ and $\beta$;
		\item Given a set of partial symmetries $\sett{\alpha_i}{i \in I}$, where each $\alpha_i$ has domain $U_i$, if each pair $\alpha_i, \alpha_j$ agree on the intersection $U_i \cap U_j$, then by `gluing' the partial symmetries together, we obtain a new partial symmetry $\alpha$ with domain $\bigcup_i U_i$.
	\end{enumerate}
	These additional operations are captured by the notion of an \emph{abstract pseudogroup} (as found in, e.g., \cite[\S 2.6]{lawson_txtbk}, and recalled in \cref{df:psdgrp}), which expresses these operations through a combination of algebraic and order-theoretic properties.
	
	The question we are concerned with in this paper is: when does an abstract pseudogroup admit a faithful representation by partial bijections?  We provide the following complete characterisation, thus giving a parallel of the Wagner-Preston theorem for abstract pseudogroups.
	\begin{thm}\label{mainthm:enough_pts}
		An abstract pseudogroup $A$ admits a faithful representation by partial bijections on a set if and only if the frame of idempotents has enough points.
	\end{thm}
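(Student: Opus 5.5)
The plan is to prove the two directions separately. Throughout, a \emph{faithful representation} means an injective morphism of abstract pseudogroups $\theta \colon A \to \I(X)$ into the symmetric inverse monoid on a set $X$. Such a morphism preserves compatible joins (in particular all joins of idempotents) and finite meets of idempotents.

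\emph{Necessity.} Suppose $\theta \colon A \to \I(X)$ is faithful. Restricting $\theta$ to idempotents gives a map $E(A) \to E(\I(X)) \cong \P(X)$, $e \mapsto \mathrm{dom}\,\theta(e)$. This map is an injective frame homomorphism, since $\theta$ preserves arbitrary joins of idempotents and binary meets (products) of idempotents. Each $x \in X$ yields a point of $E(A)$, namely the frame homomorphism $E(A) \to \2$ sending $e \mapsto [x \in \mathrm{dom}\,\theta(e)]$. If $e \not\le f$ in $E(A)$, then $ef \neq e$, so by injectivity $\theta(e) \neq \theta(ef)$. Hence some $x$ lies in $\mathrm{dom}\,\theta(e)$ but not in $\mathrm{dom}\,\theta(f)$, and the corresponding point separates $e$ from $f$. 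So $E(A)$ has enough points.

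\emph{Sufficiency.} Suppose $E(A)$ is spatial. I would mimic the Wagner--Preston construction, but replace elements of $E(A)$ by points.
\begin{enumerate}
\item Let $X$ be the set of pairs $(p, \ldots)$, or more simply the set of points $p \colon E(A) \to \2$, i.e.\ completely prime filters $P \subseteq E(A)$.
\item Let $a \in A$ act by: $P \in \mathrm{dom}(a)$ iff $a^{-1}a \in P$, and then $a \cdot P = \{ f : f \ge a e a^{-1} \text{ for some } e \in P\}$.
\item Check that $a \cdot P$ is a completely prime filter. Filteredness comes from $aea^{-1}\,afa^{-1} = aefa^{-1}$. For complete primeness, if $\bigvee f_i \ge aea^{-1}$, conjugate back by $a^{-1}$ to get $\bigvee a^{-1}f_ia \ge e$, which lies in $P$, so some $a^{-1} f_i a \in P$, and this gives $f_i \ge a(a^{-1}f_i a)a^{-1}$.
\item Verify that $a \mapsto \theta(a)$ is a homomorphism, with $\theta(a^{-1}) = \theta(a)^{-1}$, and that it preserves compatible joins. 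Joins reduce to domains: $\bigvee a_i$ has $\mathrm{dom}$ idempotent $\bigvee a_i^{-1}a_i$, which lies in $P$ iff some $a_i^{-1}a_i \in P$, and on such $P$ the action of $\bigvee a_i$ agrees with that of $a_i$.
\end{enumerate}

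The main obstacle is faithfulness, because points only see idempotents and so cannot distinguish elements that act identically on all germs. I therefore expect the plain action on points to fail to be faithful; an example is a nontrivial group $G$ viewed as a pseudogroup with frame $\{0,1\}$. The fix is to take $X$ to be the set of \emph{germs} $[a, P]$ with $a^{-1}a \in P$, where $[a,P] = [b,P]$ iff $ae = be$ for some $e \in P$. Then $A$ acts by left multiplication, $c \cdot [a,P] = [ca, P]$, defined when $cc^{-1} \in aPa^{-1}$.

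Faithfulness then reduces to the following claim: if $a \neq b$, then there is a point $P$ containing $a^{-1}a$ at which the germs of $a$ and $b$ differ. To prove it, consider the idempotent
\[
  u = \bigvee \{ e \le a^{-1}a \wedge b^{-1}b : ae = be \},
\]
which is the largest idempotent on which $a$ and $b$ agree. Its existence uses infinite distributivity and the fact that compatible joins exist, since the $ae$ are pairwise compatible. Since $a \neq b$, either $a^{-1}a \not\le u$ or $b^{-1}b \not\le u$. Spatiality then provides a point $P$ containing, say, $a^{-1}a$ but not $u$. At this $P$ the germs $[a,P]$ and $[b,P]$ are different, and this distinguishes $\theta(a)$ from $\theta(b)$ when they act on the germ $[1,P]$ or $[a^{-1}a, P]$.
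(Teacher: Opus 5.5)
Your necessity argument is the paper's. Your sufficiency construction is also the paper's, up to relabelling. The paper's pairs $(x,\eqr{a})$, with $\tgt(a)\in x$ and $a\sim_x a'$ iff $\tgt(a\wedge a')\in x$, correspond to your germs $[a,a^{-1}(x)]$. Your condition ``$ae=be$ for some $e\in P$'' is equivalent to $\src(a\wedge b)\in P$. The paper's $\rho_c(x,\eqr{a})=(c(x),\eqr{ca})$ is your left multiplication.

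You genuinely differ on injectivity. You exhibit a separating germ directly, using $u$ (which is just $\src(a\wedge b)$) and the test germ $[1,P]$. The paper instead proves that $\rho$ preserves binary meets and the strict order, then invokes \cref{lem:strict_ineq_implies_inject}. Both use spatiality the same way, to separate $\src(a)$ from $\src(a\wedge b)$. Your version is more self-contained. The paper's modular version has the advantage that it is reused verbatim in \cref{sec:localic_rep}, where there are no germs to evaluate on.

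Two things need fixing.

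\textbf{The domain of the action.} The domain of $c$ acting on $[a,P]$ must be $c^{-1}c\in a\cdot P$ (equivalently $\src(ca)\in P$), not $cc^{-1}\in a\cdot P$. As written, $[ca,P]$ need not be a germ.

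\textbf{Meet preservation is never checked.} The paper's notion of representation is a pseudogroup homomorphism, which must preserve the unit and \emph{arbitrary} binary meets, not just meets of idempotents. You weakened the definition and never check $\theta(c\wedge c')=\theta(c)\cap\theta(c')$. Your step (iv) also checks multiplication and joins only for the point action, not for the germ action you actually use; those checks do carry over. The meet check is short:
\begin{enumerate}
\item Suppose $[a,P]$ lies in both domains and $[ca,P]=[c'a,P]$.
\item Shrinking the witnessing $e$, you get $\src(ca\wedge c'a)\in P$.
\item Since $ca\wedge c'a=(c\wedge c')a$ by \cref{prop:basic_props}\cref{enum:mult_dist_over_meet}, this says $a^{-1}\src(c\wedge c')a\in P$, i.e.\ $\src(c\wedge c')\in a\cdot P$.
\item Hence $[a,P]$ lies in the domain of $\theta(c\wedge c')$.
\end{enumerate}
Without this step you have only proved the weaker, join-preserving version of the theorem.
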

	Recall that a frame `has enough points' if it is isomorphic to the lattice of open subsets of a space.  By combining the above representation result with the \emph{ideal completion} from \cite[\S 3.1]{lawson_lenz}, we will deduce that to check if an equation, involving only the finitary operations of a pseudogroup, holds in an arbitrary pseudogroup, it suffices to consider whether the equation holds for partial bijections (\cref{coro:part_bij_are_fin_complete}), formalising the practical intuition of pseudogroup theorists.  %For the interested reader, we note as well that partial bijections can be faithfully represented by partial isometries on a Hilbert space \cite[\S 4]{heunen}, and so combined with the above representation we deduce that every pseudogroup whose frame of idempotents has enough points admits a faithful representation by partial isometries.
	
	Although we cannot faithfully represent a pseudogroup by partial bijections if the frame of idempotents lacks enough points, we can nonetheless obtain a representation result by replacing any mention of sets by the next best thing: \emph{locales}, in the sense of `point-free' topology.
	\begin{thm}\label{mainthm:no_pts}
		Every pseudogroup $A$ admits a faithful representation by partial homeomorphisms on a locale.
	\end{thm}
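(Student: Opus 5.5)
The plan is to mimic the Wagner–Preston construction, replacing the set of idempotents by the locale whose frame of opens is the frame of idempotents $E(A)$. Write $X$ for this locale, so $\opens(X) = E(A)$. A partial homeomorphism on $X$ will mean a frame isomorphism $\opens(U) \to \opens(V)$ between open sublocales, with $U,V \in E(A)$. Since $\opens(U) \cong {\downarrow} U$, such a map is simply an order isomorphism ${\downarrow} e \to {\downarrow} f$ of principal downsets of $E(A)$. These partial homeomorphisms form a pseudogroup $\mathcal{I}(X)$. In it, composition is composition of isomorphisms restricted to the meet of a domain with a codomain, and compatible joins are obtained by gluing isomorphisms along the frame, using that $\opens$ of a join of open sublocales is the corresponding limit of downsets. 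The first step is to verify this carefully, in particular that arbitrary compatible families glue, which uses frame distributivity in $E(A)$.

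Next I will define $\theta \colon A \to \mathcal{I}(X)$ by sending $a$ to the map ${\downarrow} a^{-1}a \to {\downarrow} aa^{-1}$, $e \mapsto aea^{-1}$, which is exactly the Munn-type conjugation action. This map is an order isomorphism with inverse $f \mapsto a^{-1}fa$. I will then check the following properties in turn:
\begin{enumerate}
\item $\theta(ab) = \theta(a)\theta(b)$, by the standard Munn calculation;
\item $\theta$ preserves zero and identity;
\item $\theta$ preserves arbitrary compatible joins. For this, $(\bigvee a_i) e (\bigvee a_i)^{-1} = \bigvee_{i,j} a_i e a_j^{-1}$ by infinite distributivity in a pseudogroup, and compatibility kills the cross terms, since for $e \le \bigvee a_i^{-1}a_i$ we have $a_i e a_j^{-1} \le a_i a_i^{-1}a_i\, e\, a_j^{-1}a_j a_j^{-1}$, which reduces to $a_i(e a_i^{-1}a_i a_j^{-1}a_j) a_i^{-1}$.
\end{enumerate}

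The main obstacle is faithfulness. The Munn representation is not injective on a non-fundamental inverse semigroup: it only detects $A$ modulo the maximal idempotent-separating congruence. Localic partial homeomorphisms of $X$ alone therefore cannot suffice when $A$ is not fundamental. My first attempt at a fix is to enlarge the locale so that it also remembers elements of $A$, not just idempotents, in the style of the Wagner–Preston left regular representation. Concretely, I will take the locale $Y$ whose frame is the frame of \emph{order ideals of $A$ closed under compatible joins}. Equivalently, this is the frame of compatibly-closed downsets of $A$, whose elements are "sheaves" over $E(A)$. On $Y$ I will let $a$ act by left multiplication, $S \mapsto aS$, from the opens contained in ${\downarrow}(a^{-1}a)A$-type regions to those below $aa^{-1}$. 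This is the localic analogue of the Wagner–Preston map $x \mapsto ax$ on $\{x : xx^{-1} \le a^{-1}a\}$.

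With this choice, faithfulness follows as in Wagner–Preston: if $\theta(a) = \theta(b)$, then evaluating at the principal ideal ${\downarrow} a^{-1}$, which is compatibly closed, gives ${\downarrow} aa^{-1} = {\downarrow} ba^{-1}$. Combined with equal domains, this forces $a = b$. Preservation of products and identities is as before. Preservation of compatible joins requires that left multiplication distribute over compatible joins inside $A$, which is where the pseudogroup axioms, rather than mere inverse semigroup axioms, are needed. I expect the real work to lie in checking that $Y$ is a frame and that $S \mapsto aS$, after closing under compatible joins, is a frame isomorphism between the appropriate open sublocales.
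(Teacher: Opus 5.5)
Your final construction works, and it is the paper's locale described differently. The assignment $\eqr{a} \mapsto {\downarrow}a$, $\nbhd{e} \mapsto eA = \{c : \tgt(c) \leqslant e\}$ respects every relation of \cref{df:presentation_of_Y}. It identifies the paper's presented $Y$ with your frame of compatibly closed downsets, which is Resende's $\mathcal{L}^{\vee}(A)$. Under this identification, the paper's $\rho_b^\ast(\eqr{a}) = \eqr{b^{-1}a}$ is exactly the inverse image of your map $S \mapsto bS$ on $\src(b)A$. So the two proofs differ in how the locale is described and in how faithfulness is proved.

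Working with the concrete frame simplifies several steps:
\begin{enumerate}
\item That $Y$ is a frame is the standard argument for $C$-ideals of a meet-stable coverage, using \cref{prop:consequences}\cref{enum:binary_meets_in_psdgrp}.
\item For $S \subseteq \src(b)A$, the set $bS$ is already a compatibly closed downset, with inverse $T \mapsto b^{-1}T$, so no closure step is needed.
\item Faithfulness is your one-line Wagner--Preston evaluation at ${\downarrow}a^{-1}$.
\end{enumerate}
The paper instead gets injectivity indirectly: through the section $s$ of $q$ (\cref{prop:q_has_section}), strict-order preservation and \cref{lem:strict_ineq_implies_inject}. Moreover, $s$ rests on \cref{coro:ineq_for_idempotents}, which the paper proves via the spatial case and hence, as written, the ultrafilter lemma. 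Your argument is choice-free. What the presentation buys in return is a transparent parallel with the germ pairs $(x,\eqr{a})$ of \cref{df:underlying_set}. The Munn detour through the locale of $E(A)$ is not needed.

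The one genuine omission is binary meets. A faithful representation here is an injective pseudogroup homomorphism, so it must preserve $\land$. Your checklist (products, identities, joins, injectivity) never addresses this, and it is where the paper spends \cref{lem:localic_rho_preserves_meets}. In $\Homeo(Y)$ the meet $\theta(b) \land \theta(b')$ is the restriction to the largest open on which the two maps agree, so you must show this open is $\src(b \land b')A$. In your framework this is short:
\begin{enumerate}
\item Suppose $\theta(b)$ and $\theta(b')$ agree on an open $U \subseteq \src(b)A \cap \src(b')A$, and let $c \in U$. Evaluating at ${\downarrow}c$ gives ${\downarrow}(bc) = {\downarrow}(b'c)$, so $bc = b'c$.
\item Hence $b\tgt(c) = b'\tgt(c) \leqslant b \land b'$. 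Since $\tgt(c) \leqslant \src(b)$, this gives $\tgt(c) = \src(b\tgt(c)) \leqslant \src(b \land b')$, so $U \subseteq \src(b \land b')A$.
\item Conversely, on $\src(b \land b')A$ both maps are $S \mapsto (b \land b')S$, because $b\,\src(b \land b') = b \land b' = b'\,\src(b \land b')$.
\end{enumerate}
Add this, together with the routine direct check that $a(bS) = (ab)S$ on domain $\src(ab)A$ (rather than ``as before'', which referred to the discarded Munn map), and the proof is complete.
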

	Our result completes some terminological housekeeping: initially, `pseudogroups' were defined `extensionally', as collections of homeomorphisms between open subsets of a topological space that are closed under certain operations (see \cref{ex:trans_psdgrp}); see \cite[\S I.1]{reinhart} and \cite[\S 2.8]{lawson_txtbk} for further discussion on the history of pseudogroups.  We follow the terminology of \cite{lawson_txtbk} and call these \emph{pseudogroups of transformations}.  In contrast to the extensional definition, abstract pseudogroups are defined `intensionally' as inverse semigroups with additional (partial) operations.  Our abstract pseudogroups are also called `abstract complete pseudogroups', for instance, in \cite{resende_adv}.  Our characterisation theorem therefore relates these two notions of `pseudogroup', showing that they coincide precisely when the frame of idempotents has enough points.
	
	We remark, for interest, that at least as far as the history of the ideas are concerned, the appearance of frame-theoretic concepts in \cref{mainthm:enough_pts} and \cref{mainthm:no_pts} represents a return to conceptual origin.  As explained in \cite{johnstone_pointless} and \cite[\S 2.8]{lawson_txtbk}, it was Ehresmann's work motivated by pseudogroups of transformations that, through his student Benabou, eventually led to the development of the frame-theoretic approach to point-free topology (see \cite{ehresmann_ger,benabou} for examples of this early development).  It is therefore perhaps unsurprising to see notions fundamental to point-free topology, such as `having enough points', reappearing in the study of abstract pseudogroups.
	
	Although the explicit statements of \cref{mainthm:enough_pts} and \cref{mainthm:no_pts} are seemingly missing in the literature, there are nonetheless enough clues in extant work to suggest that such representation results might be expected.  In particular, the particular representations we will construct in \cref{subsec:subquotient} and \cref{subsec:locale_of_subqts} bear a resemblance to the \'etale topological and localic groupoids associated with a pseudogroup, as studied in \cite{lawson_lenz,resende_notes,resende_adv}, a kinship that is deserving of further study.
	\subsection*{Overview}
	We proceeds as follows.
	First, we recall the definition of an abstract pseudogroup, and enough preliminary material regarding pseudogroups and frames to understand \cref{sec:spatial}, in which we prove our main result, \cref{mainthm:enough_pts}.
	
	We open \cref{sec:spatial} by observing that the condition identified in \cref{mainthm:enough_pts}, that the frame of idempotents has enough points, is a necessary condition (\cref{prop:psdgrp_emb_implies_enough_pts}).  Next, we construct two semigroup representations of a pseudogroup $A$ by partial bijections, with the first aiding in the construction of the latter.  The first semigroup representation is neither a pseudogroup representation nor faithful, but the second both preserves the pseudogroup structure and is faithful if the frame of idempotents of $A$ has enough points, completing the proof of \cref{mainthm:enough_pts}.  In \cref{subsec:dist}, as an application, we show that every \emph{distributive inverse semigroup} is faithfully represented by partial bijections, and thus deduce that to check if a (finitary) equation holds in an arbitrary pseudogroup, it suffices to look at partial bijections.
	
	Finally, in \cref{sec:localic_rep}, we describe what can be salvaged from \cref{mainthm:enough_pts} in the case where the frame of idempotents lacks enough points: we prove \cref{mainthm:no_pts}, that a faithful representation result is still possible using partial homeomorphisms of locales.  We recall, at the beginning of \cref{sec:localic_rep}, the additional locale-theoretic material we will need in the proof of \cref{mainthm:no_pts}.
	%%%%%%%%%%%%%%%%%%%%%%%%%%%%%%%%%%%%%%%%%%%%%%%%%%
	\section{Preliminaries}\label{sec:prelims}
	We first recall inverse semigroups and pseudogroups, before reviewing the aspects of frame theory we will use in \cref{sec:spatial}.  The reader is directed to \cite{lawson_txtbk} for more reading on the former, and \cite{picado_pultr,johnstone_stone} for more concerning frames.
	\subsection{Pseudogroups}
	An \emph{inverse semigroup} is a set $A$ with an associative binary operation ${\,\cdot\,} \colon A^2 \to A$ (we write $ab$ for $a \cdot b$) such that, for every $a \in A$, there is a unique element $a^{-1} \in A$ satisfying the equations $aa^{-1}a = a$ and $a^{-1}aa^{-1} = a^{-1}$.  We write $E(A) \subseteq A$ for the subset of \emph{idempotent} elements.  In an inverse semigroup, all idempotents commute (\cite[Theorem 1.2.8]{lawson_txtbk}).  For an element $a \in A$, we write $\src(a)$ for the idempotent $a^{-1}a$ and call this element the \emph{source} of $a$; similarly, we write $\tgt(a)$ for $aa^{-1}$ and call this the \emph{target}.  The reason for this terminology will become apparent in \cref{ex:part_bijs}.
	
	The \emph{natural order} is a partial order on any inverse semigroup, defined by $a \leqslant b$ if and only if $a = be$ for some idempotent $e \in E(A)$, equivalently $a = fb$ for a (different) idempotent $f \in E(A)$ (see \cite[Lemma 2.1.1]{lawson_txtbk} for further equivalent conditions).  As is standard, we will write $a < b$ if $a \leqslant b$ and $a \neq b$, and we will refer to this relation as the \emph{strict} natural order.  We are concerned with inverse semigroups where the natural order is particularly rich.
	\begin{df}\label{df:psdgrp}
		Let $A$ be an inverse semigroup with natural ordering $\leqslant$.
		\begin{enumerate}
			\item A subset $S \subseteq A$ is said to be \emph{jointly compatible} if, for any pair $a,b \in S$, both $ab^{-1}$ and $a^{-1}b$ are idempotent elements.
			\item We say that $A$ is an \emph{abstract pseudogroup} if every jointly compatible subset $S \subseteq A$ has a join $\bigvee S$ in the natural ordering $(A,\leqslant)$ and multiplication distributes over joins on both the left and the right, that is:
			\[\textstyle a\left(\bigvee S\right) = \bigvee aS \quad \text{and} \quad \left(\bigvee S\right)a = \bigvee Sa,\]
			where $aS = \sett{as}{s \in S}$ and $Sa = \sett{sa}{s \in S}$.  Since we will be primarily concerned with abstract pseudogroups, we will often drop the prefix `abstract'.
		\end{enumerate}
	\end{df}
	\begin{rem}
		In a pseudogroup, we ask for joins of jointly compatible subsets.  In fact, these are the only possible joins: if $A$ is an inverse semigroup with a subset $S \subseteq A$ for which the join $\bigvee S $ exists, then $S$ is jointly compatible (see \cite[Lemma 2.2.1]{lawson_txtbk}).
	\end{rem}
	We record some consequences of the definition of a pseudogroup:
	\begin{prop}\label{prop:consequences}
		Let $A$ be a pseudogroup.  
		\begin{enumerate}
			\item\label{enum:zero_exists} Then $A$ has a \emph{zero element} $0$, i.e.\ an element such that $0a = a0 = 0$ for all $a \in A$, and a \emph{unit} $1$, i.e.\ $A$ is a monoid.
			\item\label{enum:binary_meets_in_psdgrp} Additionally, $A$ has all binary meets with respect to the natural ordering and moreover meets distribute over joins, i.e.\ $a \land \bigvee_I b_i = \bigvee_I a \land b_i$.
		\end{enumerate}
	\end{prop}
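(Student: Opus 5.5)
The plan is to build each piece of structure as a join of a suitable jointly compatible set, and then to use the distributivity of multiplication over joins from \cref{df:psdgrp} to verify the required identities. Throughout I use standard facts about the natural order from \cite{lawson_txtbk}. First, it is compatible with multiplication: $a \leqslant b$ implies $ac \leqslant bc$ and $ca \leqslant cb$. Second, any set of elements bounded above by a common element is jointly compatible, and hence so is any set of elements each lying below some member of a jointly compatible set. Third, idempotents are pairwise compatible, and a join of idempotents is idempotent.

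For \cref{enum:zero_exists}, I take $0 = \bigvee \emptyset$, which exists because the empty set is vacuously jointly compatible. Distributivity then gives $a0 = \bigvee a\emptyset = \bigvee \emptyset = 0$, and likewise $0a = 0$. For the unit, $E(A)$ is jointly compatible, so I set $1 = \bigvee E(A)$. By distributivity, $a1 = \bigvee \sett{af}{f \in E(A)}$. Each $af \leqslant a$, and the term $a\,\src(a) = a$ appears among them, so this join equals $a$. Symmetrically, $1a = \bigvee \sett{fa}{f\in E(A)} = a$, using $\tgt(a)a = a$.

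For \cref{enum:binary_meets_in_psdgrp}, given $a,b$, I let $L = \sett{c}{c \leqslant a,\ c\leqslant b}$. This set is bounded above by $a$, so it is jointly compatible and $\bigvee L$ exists. Since $a$ and $b$ are both upper bounds of $L$, we get $\bigvee L \leqslant a,b$, and hence $\bigvee L = a\land b$.

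For distributivity, let $\{b_i\}$ be jointly compatible with $s=\bigvee_I b_i$. The family $\{a\land b_i\}$ lies under the compatible family $\{b_i\}$, so it is compatible, and clearly $\bigvee_I (a\land b_i) \leqslant a\land s$. For the reverse inequality, put $c = a\land s$. Since $c\leqslant s$, we have $c = s\,\src(c) = \bigvee_I b_i\,\src(c)$ by right distributivity. Each term satisfies $b_i\,\src(c) \leqslant b_i$, and also $b_i\,\src(c)\leqslant s\,\src(c) = c\leqslant a$ by compatibility of the order with multiplication. Hence $b_i\,\src(c)\leqslant a\land b_i$, so $c\leqslant \bigvee_I(a\land b_i)$.

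The only step I expect to need care is this last inequality: the key is to rewrite $c$ as a join of restrictions of the $b_i$. The rest is routine bookkeeping with the natural order.
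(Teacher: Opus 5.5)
Your proposal is correct. Part (i) is the paper's argument: $0=\bigvee\emptyset$, $1=\bigvee E(A)$, and $a1=\bigvee aE(A)=a$ because every $af\leqslant a$ and $a=a\,\src(a)$ itself occurs in the family.

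For part (ii) your route differs from the paper's. The paper gives no argument of its own and simply cites Resende's note and Lemmas 2.5.2 and 2.5.4 of Lawson's book. You prove it directly in two steps:
\begin{enumerate}
\item You obtain $a\land b$ as the join of the set of common lower bounds. This set is compatible because it is bounded above by $a$.
\item You get the nontrivial inequality $a\land\bigvee_I b_i\leqslant\bigvee_I(a\land b_i)$ by writing $c=a\land s$ as $c=s\,\src(c)=\bigvee_I b_i\,\src(c)$. Each $b_i\,\src(c)$ is a common lower bound of $a$ and $b_i$.
\end{enumerate}
Your approach makes the proposition self-contained. It also shows exactly what part (ii) needs: existence of joins of compatible sets, right distributivity of multiplication over joins, and elementary facts about the natural order. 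Those facts are $c\leqslant s\iff c=s\,\src(c)$, compatibility of $\leqslant$ with multiplication, and compatibility of sets bounded above. The paper's citation is shorter, but it sends the reader to the literature for what you show is a few-line argument.
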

	\begin{proof}
		For the zero element, take $0$ to be the join of the emptyset $\bigvee \emptyset$; then $a0 = a \left(\bigvee \emptyset\right) = \bigvee \emptyset = 0$ (and similarly $0a=0$).  For the unit, take $1$ to be the join $\bigvee E(A)$ over all the idempotents of $A$, which is clearly a jointly compatible subset.  We need to show that $a 1 = a\left(\bigvee E(A)\right) = \bigvee aE(a)$ is equal to $a$.  Note that each element $ae \in aE(a)$ is less than or equal to $a$, and so $\bigvee aE(A) \leqslant a$; conversely, $a = aa^{-1}a \in aE(a)$, and so $a \leqslant \bigvee aE(A)$.  Thus, $a1 = a$ (and similarly $1a=a$).  For \cref{enum:binary_meets_in_psdgrp}, see \cite{resende_inf_dist} or \cite[Lemmas 2.5.2 \& 2.5.4]{lawson_txtbk}.
	\end{proof}
	It is easily observed that for a pair of idempotents $e, f \in E(A)$, their meet $e \land f$ is precisely their product $ef$.
	\begin{ex}[Pseudogroup of partial bijections]\label{ex:part_bijs}
		Let $X$ be a set.  The partial bijections $\I(X)$ of $X$ is the archetypal example of a pseudogroup.  The pseudogroup structure on $\I(X)$ is most apparent when a partial bijection on $X$ is viewed in terms of its graph as a subset of $X^2$.
		
		Multiplication is given by relational composition.  The natural ordering is set-theoretic inclusion, or equivalently $f \leqslant g$ if $f$ is a restriction of $g$.  The unit is the identity function on $X$ while the zero element is the empty-set.  The meet of two partial bijections $f$ and $g$ is given by the intersection of their graphs $f \cap g$.  A subset of partial bijections $S \subseteq \I(X)$ is compatible if, given $f, g \in S$, both $f$ and $g$ agree on the intersection of their domain.  In this case, the join of $S$ is given by the set-theoretic union $\bigcup S$.
		
		For this reason, when discussing the pseudogroup $\I(X)$, we will denote a meet by intersection `$\cap$', and a join by union `$\bigcup$'.  Note also that a partial bijection is idempotent in $\I(X)$ if and only if it is a restriction of the identity, and so the subset of idempotents $E(\I(X))$ is order isomorphic to the powerset lattice $\P(X)$.  Under this isomorphism, $\src(f)$ is precisely the source, or domain, of the partial bijection $f$, and similarly for $\tgt(f)$.
	\end{ex}
	\begin{ex}[Pseudogroup of transformations]\label{ex:trans_psdgrp}
		We now give the classical definition of a pseudogroup (of transformations), as found, for instance, in \cite[Definition 1.1]{reinhart}.  Let $X$ be a topological space.  A \emph{pseudogroup of transformations over $X$} is a collection $A$ of homeomorphisms $a \colon U \xrightarrow{\sim} V$ between open subsets of $X$ such that:
		\begin{enumerate}
			\item $A$ is closed under composition and inverses of partial homeomorphisms;
			\item the identity on $X$ belongs to $A$;
			\item if $a \in A$, then the restriction of $a$ to any open subset of its domain is also contained in $A$;
			\item if $\sett{a_i}{i \in I} \subseteq A$ is a subset of partial homeomorphisms $a_i \colon U_i \xrightarrow{\sim} V_i$, where each $a_i$ and $a_j$ agree on the intersection of their domains, i.e.\ $S$ is jointly compatible, then $A$ contains a homeomorphism $a \in A$ whose domain is $\bigcup_I U_i$ and the restriction of $a$ to $U_i$ is precisely $a_i$.
		\end{enumerate}
		It is easy to check that $A$ is an abstract pseudogroup in the sense of \cref{df:psdgrp}, and that $E(A)$ is order isomorphic to the lattice $\opens(X)$ of open subsets of $X$.
		
		In particular, the collection of all homeomorphisms between open subsets of $X$ is a pseudogroup of transformations, which we denote by $\Homeo(X)$.
	\end{ex}
	\begin{ex}\label{ex:G+0}
		A group is an inverse semigroup but not a pseudogroup.  Each group $G$ yields a pseudogroup by augmenting $G$ with a zero element $0$, i.e.\ by taking the disjoint union $G + \{0\}$ with the multiplication
		\[
		g \cdot h : = \begin{cases}
			0 & \text{if $g = 0$ or $h=0$,} \\
			gh & \text{if $g, h \in G$.}
		\end{cases}
		\]		
		With this multiplication, $G+\{0\}$ is a pseudogroup; the natural order is easy to describe: $0 \leqslant g$ for all $g \in G$, and all other pairs $g, h \in G$ are incomparable.
	\end{ex}
	Even though the definition of a pseudogroup only asked for the existence of certain joins, the homomorphisms we will consider are required to preserve the additional structure we can derive from the definition of a pseudogroup (\cref{prop:consequences}).
	\begin{df}
		Given pseudogroups $A$ and $B$, a \emph{pseudogroup homomorphism} $\theta \colon A \to B$ is a function that preserves multiplication, the unit, all joins and binary meets in the natural ordering.
	\end{df}
	By a \emph{representation of a pseudogroup $A$ by partial bijections} (as in \cref{mainthm:enough_pts}), we mean an injective pseudogroup homomorphism $A \hookrightarrow \I(X)$ for some set $X$.  A \emph{pseudogroup of transformations} can then be understood as a pseudogroup equipped with such a faithful representation -- indeed, given a faithful representation $\theta \colon A \hookrightarrow \I(X)$ for a set $X$, then $A$ is a pseudogroup of transformations over the space where $X$ has been endowed with the topology whose open subsets are those subsets in the image of the restriction $\theta|_{E(A)} \colon E(A) \to E(\I(X)) \cong \P(X)$.
	\begin{rem}
		Note that our pseudogroup homomorphisms preserve binary meets, whereas the morphisms considered in \cite[\S 2.5]{lawson_txtbk} preserve only joins.
	\end{rem}
	Before continuing, we list some basic properties of inverse semigroups that we will use in \cref{sec:spatial}.  The reader may wish to retrospectively review these results in light of \cref{coro:part_bij_are_fin_complete}.
	\begin{prop}\label{prop:basic_props}
		Let $A$ be an inverse semigroup.
		\begin{enumerate}
			\item\label{enum:inv_of_product} For all $a, b \in A$, $(ab)^{-1} = b^{-1}a^{-1}$.
			\item\label{enum:a_inv_a_and_b} If $a \leqslant b$, then $b^{-1}a = a^{-1}a$.
			\item\label{enum:mult_dist_over_meet} If the meet $a \land b$ exists, then $(a \land b)c = ac \land bc$ and $c(a \land b) = ca \land cb$ for all $c \in A$.
			\item\label{enum:src_of_join} If the join $\bigvee_I b_i$ exists, then $(\bigvee_I b_i)^{-1} = \bigvee_I b_i^{-1}$ and $\src\left(\bigvee_I b_i\right) = \bigvee_I \src(b_i)$.
		\end{enumerate}
	\end{prop}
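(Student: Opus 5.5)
We will check the four items one at a time, using only the inverse semigroup axioms, the fact that idempotents commute, and the characterisations of the natural order ($a \leqslant b$ iff $a = be$ for some idempotent $e$, iff $a = fb$ for some idempotent $f$, iff $a = b\,\src(a)$, iff $a = \tgt(a)\,b$). These equivalences are \cite[Lemma 2.1.1]{lawson_txtbk}.

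For \cref{enum:inv_of_product}, we show that $b^{-1}a^{-1}$ satisfies the two defining equations for $(ab)^{-1}$ and then invoke uniqueness of inverses. Since $a^{-1}a$ and $bb^{-1}$ are idempotents, they commute. Hence
\[ab\,b^{-1}a^{-1}\,ab = a(a^{-1}a)(bb^{-1})b = aa^{-1}a\,bb^{-1}b = ab,\]
and symmetrically $b^{-1}a^{-1}\,ab\,b^{-1}a^{-1} = b^{-1}a^{-1}$.

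For \cref{enum:a_inv_a_and_b}, write $a = be$ with $e = \src(a)$. Then
\[b^{-1}a = b^{-1}be = \src(b)\,\src(a).\]
Separately, $a^{-1}a = e\,b^{-1}b\,e = \src(b)\,e$, because idempotents commute and $e^2 = e$. The two expressions agree, which gives $b^{-1}a = a^{-1}a$.

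For \cref{enum:mult_dist_over_meet}, multiplication is monotone, so $(a\wedge b)c$ is a lower bound of $\{ac, bc\}$. For the converse, let $d \leqslant ac$ and $d \leqslant bc$. The natural move is to set $x = d c^{-1}$. Then $x \leqslant ac c^{-1} \leqslant a$ and likewise $x \leqslant b$, so $x \leqslant a \wedge b$ and therefore $xc \leqslant (a\wedge b)c$. It remains to check that $xc = d$, that is, $d\,c^{-1}c = d$. Since $d \leqslant ac$, we have $d = ac\,\src(d)$, and $\src(d) \leqslant \src(ac) \leqslant \src(c)$. Hence $\src(d)\,c^{-1}c = \src(d)$, and so $d\,\src(c) = ac\,\src(d)\,\src(c) = d$. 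This verification is the one mildly delicate point. The statement for left multiplication follows by the symmetric argument, or by applying inversion, which is an order automorphism.

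For \cref{enum:src_of_join}, inversion is an order isomorphism of $(A,\leqslant)$, because $a = be$ implies $a^{-1} = eb^{-1}$. Since it preserves order in both directions, it preserves all existing joins, which gives $(\bigvee b_i)^{-1} = \bigvee b_i^{-1}$. For the source, write $b = \bigvee b_i$. Monotonicity gives $\src(b_i) \leqslant \src(b)$, so $\src(b)$ is an upper bound of the $\src(b_i)$.

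The main obstacle is showing that $\src(b)$ is the least upper bound in a general inverse semigroup, where distributivity is not assumed. The plan is to use the result that if $\bigvee b_i$ exists then $\bigvee \src(b_i)$ exists and equals $\src(\bigvee b_i)$ \cite[Proposition 1.4.17/Lemma 2.2.1]{lawson_txtbk}. To see why, suppose $f$ is an upper bound of the $\src(b_i)$ among idempotents. Then $b f \geqslant b_i\,\src(b_i) = b_i$ for each $i$, so $bf \geqslant b$. Since also $bf \leqslant b$, we get $bf = b$, hence $\src(b) = f\,\src(b) \leqslant f$. An arbitrary upper bound $u \in A$ can be replaced by the idempotent $u\,\src(u)$-part: $e \leqslant u$ with $e$ idempotent implies $e \leqslant \src(u)$. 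This reduces the general case to the idempotent case.
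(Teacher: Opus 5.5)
Your arguments for \cref{enum:inv_of_product}, \cref{enum:a_inv_a_and_b}, \cref{enum:mult_dist_over_meet} and for the inversion half of \cref{enum:src_of_join} are correct. Your \cref{enum:a_inv_a_and_b} is the paper's computation, with $a = be$ in place of $a = eb$. For the other items the paper only cites Lawson, so your direct argument for \cref{enum:mult_dist_over_meet} (taking $x = dc^{-1}$ and checking $d\,\src(c) = d$) is a self-contained addition.

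The gap is the last step for $\src$. Your argument with an idempotent upper bound $f$ is fine, but the reduction of an arbitrary upper bound $u \in A$ to that case fails. From $\src(b_i) \leqslant u$ you correctly get $\src(b_i) \leqslant \src(u)$, and hence $\src(b) \leqslant \src(u)$ by the idempotent case. What you need, however, is $\src(b) \leqslant u$, and $g \leqslant \src(u)$ does not imply $g \leqslant u$. For instance, in $\I(\{1,2\})$ with $u$ the transposition, $\src(u) = \id$, but the only idempotent below $u$ is the empty map. Note also that $u\,\src(u) = u$, so there is no `idempotent part' of $u$ to replace it by. As written, you have shown only that $\src(b)$ is the join of the $\src(b_i)$ in $E(A)$, not in $(A,\leqslant)$.

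The repair is to run your own argument with $u$ in place of $f$. Since $b_i \leqslant b$ and $\src(b_i) \leqslant u$, compatibility of the natural order with multiplication gives $b_i = b_i\,\src(b_i) \leqslant bu$ for every $i$, so $b \leqslant bu$. Multiplying on the left by $b^{-1}$ gives $\src(b) \leqslant \src(b)\,u$. Since $\src(b)$ is idempotent, $\src(b)\,u \leqslant u$, and hence $\src(b) \leqslant u$. This handles every upper bound at once, and for $u = f$ idempotent it reduces to your argument.

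The paper only applies \cref{enum:src_of_join} to joins in the frame $E(A)$, in \cref{lem:rho_preserves_joins} and \cref{lem:localic_rho_preserves_joins}. So your weaker version would suffice for those uses, but it does not prove the item as stated.
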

	\begin{proof}
		See \cite[Lemma 1.2.11]{lawson_txtbk} for \cref{enum:inv_of_product}.  For \cref{enum:a_inv_a_and_b}, since $a  \leqslant b$, there exists some idempotent $e$ such that $a = eb$, and so 
		\[a^{-1}a = b^{-1}e^{-1}eb = b^{-1}e^2b = b^{-1}eb = b^{-1}a.\]
		For \cref{enum:mult_dist_over_meet} and \cref{enum:src_of_join}, see \cite[Lemmas 2.3.3, 2.4.3 \& 2.4.4]{lawson_txtbk}.
	\end{proof}
	We will also make use of the following lemma.  Recall from \cite[\S 2.3]{lawson_txtbk} that an inverse semigroup $A$ is called a \emph{meet semigroup} if $(A,\leqslant)$ has binary meets, and a semigroup homomorphism is a \emph{meet semigroup} homomorphism if it preserves binary meets in addition to multiplication.
	\begin{lem}\label{lem:strict_ineq_implies_inject}
		Let $A$ and $B$ be meet semigroups.  Suppose that $\theta \colon A \to B$ is a meet semigroup homomorphism that preserves the strict natural ordering, that is
		\[
		a < b \implies \theta(a) < \theta(b)
		\]
		for all $a, b \in A$.  Then $\theta$ is injective.
	\end{lem}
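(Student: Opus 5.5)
The idea is to reduce injectivity to the strict-order hypothesis by passing through meets. Suppose $a, b \in A$ satisfy $\theta(a) = \theta(b)$; we want $a = b$. Since $A$ is a meet semigroup, the meet $a \land b$ exists, and $a \land b \leqslant a$ and $a \land b \leqslant b$. Because $\theta$ is a meet semigroup homomorphism,
\[
\theta(a \land b) = \theta(a) \land \theta(b) = \theta(a) \land \theta(a) = \theta(a),
\]
and likewise $\theta(a \land b) = \theta(b)$.

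Next we compare $a \land b$ with $a$. Suppose $a \land b \neq a$. Then $a \land b < a$ in the strict natural order, so by hypothesis $\theta(a \land b) < \theta(a)$. In particular $\theta(a \land b) \neq \theta(a)$, which contradicts the equality just obtained. Hence $a \land b = a$.

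Running the same argument with $b$ in place of $a$ gives $a \land b = b$. Therefore $a = a \land b = b$, and $\theta$ is injective.

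No step here is a genuine obstacle. The only points to check are that preserving binary meets really means $\theta(a\land b)=\theta(a)\land\theta(b)$, with the meet on the right taken in $(B,\leqslant)$, and that idempotence of meets gives $\theta(a) \land \theta(a) = \theta(a)$. Multiplicativity of $\theta$ is never used; only meet preservation and strict monotonicity are needed.
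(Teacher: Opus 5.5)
Your proof is correct and is essentially the paper's argument: the paper assumes $a \neq b$, takes without loss of generality $a \land b < a$, and concludes $\theta(a) \land \theta(b) = \theta(a \land b) < \theta(a)$, so $\theta(a) \neq \theta(b)$. You run the same reasoning in contrapositive form, and you are right that multiplicativity of $\theta$ plays no role.
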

	\begin{proof}
		If $a \neq b$, then either $a \land b < a$ or $a \land b < b$.  Without loss of generality, assume that $a \land b < a$.  Then $\theta(a) \land \theta(b) = \theta(a \land b) < \theta(a)$, and so $\theta(a) \neq \theta(b)$.
	\end{proof}
	\subsection{Frames}
	A pseudogroup in which every element is idempotent is more commonly known as a \emph{frame}, that is, a complete lattice $(A,\leqslant)$ satisfying the infinite distributivity law $a \land \bigvee_I b_i = \bigvee_I a \land b_i$ for all $a \in A$ and $\sett{b_i}{ i \in I} \subseteq A$.  A \emph{frame homomorphism} $f \colon A \to B$ is a map that preserves all joins and all finite meets, i.e.\ a pseudogroup homomorpism between pseudogroups of exclusively idempotent elements.
	\begin{ex}
		For any pseudogroup $A$, the set of idempotents $E(A)$, endowed with the restriction of the natural order, describes a frame.  In fact, for any inverse semigroup $B$, the idempotents $E(B)$ form a frame if and only if $B$ is a pseudogroup (\cite[Lemma 2.5.1]{lawson_txtbk}).  For any pseudogroup homomorphism $\theta \colon A \to B$, the restriction to idempotents $\theta|_{E(A)} \colon E(A) \to E(B)$ is a frame homomorphism.
	\end{ex}
	\begin{ex}
	Frames typically appear in the `point-free' approach to topology, where open subsets are treated as primitive, and points are a derived notion (for more on this philosophy, see \cite{johnstone_pointless}).  For any topological space $X$, the lattice of open subsets $\opens(X)$ is a frame, and similarly every continuous map $f \colon X \to Y$ between topological spaces defines a frame homomorphism $f^{-1} \colon \opens(Y) \to \opens(X)$ that maps an open subset of $Y$ to its inverse image.  This connection to topology will be pursued with greater earnest in \cref{subsec:frame_pres}.
	\end{ex}
	We now describe how points are recovered from the notion of a frame.  The frame of opens of the one-point space $\{\ast\}$ consists of just the bottom and top elements, and so it is often denoted by $\2 = \{0,1\}$.  As the points of a topological space $X$ correspond to continuous maps $\{\ast\} \to X$, by analogy we define a \emph{point} of a frame $A$ to be a frame homomorphism $p \colon A \to \2$.  As long as the space $X$ satisfies reasonable separation conditions, e.g.\ if $X$ is Hausdorff, the points of the frame $\opens(X)$ correspond to the points of $X$.
	
	Points can alternatively be described as \emph{completely prime filters} of $A$, i.e.\ subsets $F \subseteq A$ satisfying the following conditions:
	\begin{enumerate}
		\item $F$ is non-empty and upwards closed, i.e.\ if $a \in F$ and $a \leqslant b$, then $b \in F$;
		\item if $a$ and $b$ are both in $F$, then $a \land b \in F$;
		\item if $\bigvee_I a_i$ is contained in $F$, then $a_{i_0} \in F$ for some $i_0 \in I$.
	\end{enumerate}
	This description of points generalises the \emph{neighbourhood filter} of a point of a topological space.  Points as frame homomorphisms $p \colon A \to \2$ bijectively correspond to points as completely prime filters, as witnessed by sending $p$ to $p^{-1}(1) \subseteq A$.
	
	Although frames behave in many ways like topological spaces, there are strictly more frames than there are frames of opens on a topological space.
	\begin{df}
		A frame $A$ is \emph{spatial}, or \emph{has enough points}, if $A \cong \opens(X)$ for some topological space.
	\end{df}
	A non-spatial frame is described in \cite[Example C1.2.8]{elephant}.  Since frames are also the Lindenbaum-Tarski algebras for theories of \emph{geometric propositional logic} (see \cite[Remark D1.4.14]{elephant}), the existence of non-spatial frames can be understood as the failure of the `completeness theorem' for this logic.  We list some equivalent criteria for spatiality that we will use interchangeably below.
	\begin{prop}[cf.\ \S II.5 \cite{picado_pultr}]\label{prop:equiv_spatiality}
		The following are equivalent for a frame $A$:
		\begin{enumerate}
			\item $A$ is spatial;
			\item for every pair $a, b \in A$ with $a \neq b$, there is a point $p \colon A \to \2$ such that $p(a) \neq p(b)$;
			\item there is an injective frame homomorphism $A \hookrightarrow \P(X)$ to the powerset frame on some set $X$.
		\end{enumerate}
	\end{prop}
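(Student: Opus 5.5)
I will prove the equivalences cyclically, (i) $\Rightarrow$ (iii) $\Rightarrow$ (ii) $\Rightarrow$ (i). Throughout, points are frame homomorphisms $A \to \2$, or equivalently completely prime filters. The central object is the space of points $\mathrm{pt}(A)$. Its opens are the sets $U_a = \sett{p}{p(a) = 1}$ for $a \in A$, and the assignment $a \mapsto U_a$ is always a frame homomorphism $A \to \opens(\mathrm{pt}(A))$. Preservation of finite meets and arbitrary joins is immediate, because each $p$ preserves them and $\2$ computes them pointwise. Surjectivity onto the topology also holds by definition.

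\textbf{(i) $\Rightarrow$ (iii).} Given an isomorphism $A \cong \opens(X)$, compose it with the inclusion $\opens(X) \subseteq \P(X)$. This inclusion preserves finite intersections and arbitrary unions, so it is an injective frame homomorphism $A \hookrightarrow \P(X)$.

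\textbf{(iii) $\Rightarrow$ (ii).} Let $\iota \colon A \hookrightarrow \P(X)$ be injective and take $x \in X$. Define $p_x \colon A \to \2$ by $p_x(a) = 1$ if and only if $x \in \iota(a)$. This is a frame homomorphism, since it is the composite of $\iota$ with the evaluation $\P(X) \to \2$ at $x$, and evaluation preserves unions and finite intersections. If $a \neq b$, then $\iota(a) \neq \iota(b)$, so some $x$ lies in exactly one of them, and then $p_x(a) \neq p_x(b)$.

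\textbf{(ii) $\Rightarrow$ (i).} Consider the canonical map $a \mapsto U_a$ into $\opens(\mathrm{pt}(A))$. It is a surjective frame homomorphism. Hypothesis (ii) says precisely that $a \neq b$ implies $U_a \neq U_b$, so the map is injective. A bijective frame homomorphism is an order isomorphism, so $A \cong \opens(\mathrm{pt}(A))$.

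No step is really an obstacle. The only verifications needing care are that $a \mapsto U_a$ is a frame homomorphism and that its image is closed under arbitrary unions and finite intersections, so that it genuinely forms a topology. Both follow from the equalities $U_{\bigvee a_i} = \bigcup U_{a_i}$ and $U_{a \land b} = U_a \cap U_b$, which hold because points preserve joins and meets.
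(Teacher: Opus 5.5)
Your proof is correct, and it is the standard argument. The paper gives no proof of its own and defers to Picado--Pultr \S II.5, where the proof runs the same way: the spectrum map $a \mapsto U_a$ onto $\opens(\mathrm{pt}(A))$ is always a surjective frame homomorphism, condition (ii) is exactly its injectivity, and the remaining implications are your routine checks via the inclusion $\opens(X) \subseteq \P(X)$ and evaluation at points.
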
	
	Further aspects of frame theory will be introduced when needed in \cref{subsec:frame_pres} and \cref{subsec:locale_prelims}.
	%%%%%%%%%%%%%%%%%%%%%%%
	\section{Faithful representation of pseudogroups with enough points}\label{sec:spatial}
	In this section, we prove our main theorem, \cref{mainthm:enough_pts} -- a characterisation of those pseudogroups with a faithful representation by partial bijections via the following property.
	\begin{df}
		We say that a pseudogroup \emph{has enough points} if the frame of idempotents $E(A)$ has enough points.
	\end{df}
	We can immediately see that this is a necessary condition as follows.
	\begin{prop}\label{prop:psdgrp_emb_implies_enough_pts}
		If a pseudogroup $A$ admits a faithful representation by partial bijections, then the frame of idempotents $E(A)$ has enough points.
	\end{prop}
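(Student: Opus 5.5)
Suppose $\theta \colon A \hookrightarrow \I(X)$ is an injective pseudogroup homomorphism. The plan is to restrict $\theta$ to the idempotents and then invoke criterion (iii) of \cref{prop:equiv_spatiality}. That criterion asks for an injective frame homomorphism $E(A) \hookrightarrow \P(X)$.

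First, as observed in the example following the definition of frames, the restriction $\theta|_{E(A)}$ is a frame homomorphism $E(A) \to E(\I(X))$. The one point to check is that $\theta$ sends idempotents to idempotents, and this is immediate since $\theta$ preserves multiplication: $\theta(e) = \theta(e^2) = \theta(e)^2$. Preservation of arbitrary joins and binary meets is inherited from $\theta$. The top element is handled by the fact that the unit $1$ of $A$ is $\bigvee E(A)$, the top of $E(A)$, and $\theta$ preserves the unit, so $\theta(1) = \id_X$, the top of $E(\I(X))$.

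Second, $\theta|_{E(A)}$ is injective because it is a restriction of the injective map $\theta$. We then compose with the order isomorphism $E(\I(X)) \cong \P(X)$ from \cref{ex:part_bijs}, which sends a partial identity to its domain. An order isomorphism of complete lattices preserves all joins and meets, so it is a frame isomorphism. The composite
\[E(A) \to E(\I(X)) \cong \P(X)\]
is therefore an injective frame homomorphism, and criterion (iii) gives that $E(A)$ is spatial.

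There is no real obstacle in this argument. The only step that needs care is confirming that the restriction of $\theta$ preserves the frame structure, in particular the top element and finite meets. This is guaranteed because pseudogroup homomorphisms are defined to preserve the unit and binary meets, and because meets of idempotents are their products.
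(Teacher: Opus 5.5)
Your proof is correct and follows the paper's argument: you restrict the injective pseudogroup homomorphism to idempotents, obtaining an injective frame homomorphism $E(A) \hookrightarrow E(\I(X)) \cong \P(X)$, and then apply criterion (iii) of \cref{prop:equiv_spatiality}. Your additional checks (that idempotents go to idempotents, and that the top element and finite meets are preserved) spell out what the paper takes from its earlier example on restricting pseudogroup homomorphisms to idempotents.
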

	\begin{proof}
		Let $\rho \colon A \hookrightarrow \I(X)$ be such an injective pseudogroup homomorphism.  Restricting to idempotents, we obtain an injective frame homomorphism $\rho|_{E(A)} \colon E(A) \hookrightarrow E(\I(X))$.  Recall that the frame of idempotents of $\I(X)$ is order isomorphic to the powerset frame $\P(X)$.  Thus, as $E(A)$ embeds into a powerset frame, it has enough points (\cref{prop:equiv_spatiality}).
	\end{proof}
	\begin{rem}\label{ex:non_spatial_frame}
		Thus, any non-spatial frame gives an example of an abstract pseudogroup that cannot be faithfully represented by partial bijections.
	\end{rem}
	The majority of the remainder of this section is devoted to proving that having enough points is a sufficient condition; namely, we construct a faithful representation by partial bijections.  We first begin by describing the canonical action of a pseudogroup on the points of its frame of idempotents (which does not describe a faithful representation).  We then show how to augment each point by a subquotient of the pseudogroup $A$ to obtain the desired faithful representation.  The assumption that $A$ has enough points is only used in the final steps of the proof, in \cref{prop:rho_preserves_strict_ineq} and \cref{prop:rho_is_injective}.
	
	Finally, in \cref{subsec:dist} we explore some consequences of \cref{mainthm:enough_pts}, including the aforementioned faithful representation by partial bijections of any distributive inverse semigroup.
	\subsection{The action on points of the idempotents}\label{sec:action_on_points}
	Let $A$ be a pseudogroup and let $X$ denote the set of points of its frame of idempotents $E(A)$.  Recall that each point $x \in X$ is a completely prime filter $x \subseteq E(A)$, i.e.\ $x$ is a subset of idempotents.  However, it is beneficial to think of the elements of $x$ as the open neighbourhoods of the point.  To avoid confusion, for an idempotent $e \in E(A)$ we will write $\nbhd{e}$ for the subset $\sett{x}{e \in x} \subseteq X$, i.e.\ $x \in \nbhd{e}$ if and only if $e \in x$.
	\begin{df}
		Let $a$ be an element of $A$, and let $x \in \nbhd{\src(a)}$ be a point of $X$.  We denote by $a(x)$ the subset
		\[
		a(x) = {\uparrow}\sett{aea^{-1}}{e \in x} \subseteq E(A),
		\]
		that is the set of all idempotents $f \in E(A)$ for which $f \geqslant aea^{-1}$ for some $e \in x$, and we call this the \emph{image} of $x$ under $a$.
	\end{df}
	\begin{lem}
		For each point $x \in \nbhd{\src(a)} \subseteq X$, the subset $a(x) \subseteq E(A)$ is a completely prime filter of $E(A)$ that contains $\tgt(a)$, i.e.\ $a(x) \in \nbhd{\tgt(a)} \subseteq X$.
	\end{lem}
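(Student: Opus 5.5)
We verify the conditions for a completely prime filter one at a time. Throughout, $x$ is a completely prime filter of $E(A)$ containing $\src(a)$. First note that for each idempotent $e$, the element $aea^{-1}$ is idempotent, since
\[
(aea^{-1})^2 = ae(a^{-1}a)ea^{-1} = aea^{-1},
\]
using that idempotents commute. So $a(x)$ is indeed a subset of $E(A)$, and it is upward closed by definition.

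\textbf{Non-emptiness and $\tgt(a) \in a(x)$.} Since $\src(a) \in x$, we have $a\src(a)a^{-1} = aa^{-1}aa^{-1} = \tgt(a) \in a(x)$. This gives both non-emptiness and the final claim.

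\textbf{Closure under binary meets.} Suppose $f \geqslant aea^{-1}$ and $f' \geqslant ae'a^{-1}$ with $e, e' \in x$. Then $ee' \in x$, so it suffices to show
\[
aea^{-1}\cdot ae'a^{-1} = a\,e\,\src(a)\,e'\,a^{-1} = a\,(ee')\,\src(a)\,a^{-1} = a(ee')a^{-1}.
\]
Here we commute idempotents and use $a\src(a) = a$. Hence $ff' \geqslant a(ee')a^{-1} \in a(x)$.

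\textbf{Complete primeness.} This is the main step. Suppose $\bigvee_I f_i \in a(x)$, so $\bigvee_I f_i \geqslant aea^{-1}$ for some $e \in x$. We pull back along $a$ by conjugating with $a^{-1}$. Using distributivity of multiplication over joins (\cref{df:psdgrp}), the joins $f_i$ of idempotents are jointly compatible, and
\[
a^{-1}\Big(\bigvee_I f_i\Big)a = \bigvee_I a^{-1}f_i a \;\geqslant\; a^{-1}aea^{-1}a = \src(a)\, e\, \src(a) = e\,\src(a).
\]
Now $e\,\src(a) \in x$ because both $e$ and $\src(a)$ lie in $x$, and $x$ is upward closed. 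Therefore $\bigvee_I a^{-1}f_i a \in x$. Each $a^{-1}f_ia$ is idempotent, so complete primeness of $x$ gives some $i_0$ with $g := a^{-1}f_{i_0}a \in x$.

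It remains to push $g$ forward and show $f_{i_0} \in a(x)$:
\[
aga^{-1} = aa^{-1}f_{i_0}aa^{-1} = \tgt(a)\, f_{i_0}\, \tgt(a) = \tgt(a) f_{i_0} \leqslant f_{i_0}.
\]
Since $g \in x$, this shows $f_{i_0} \in a(x)$.

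The only delicate points are the two conjugation identities. The first is $a^{-1}(aea^{-1})a = e\,\src(a)$. The second is that conjugating back yields an element below $f_{i_0}$. Both follow from commutation of idempotents together with $a\src(a) = a$ and $\tgt(a)a = a$.
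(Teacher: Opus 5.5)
Your proof is correct and takes essentially the same route as the paper: you show $aea^{-1}$ is idempotent, note $\tgt(a) = a\,\src(a)\,a^{-1} \in a(x)$, get closure under meets via $a(ee')a^{-1}$, and obtain complete primeness by conjugating with $a^{-1}$, distributing over the join, applying complete primeness of $x$, and pushing back via $\tgt(a)f_{i_0} \leqslant f_{i_0}$. One small wording slip is that $e\,\src(a) \in x$ holds because $x$ is closed under binary meets (products of idempotents), not by upward closure, which is only needed afterwards to get $\bigvee_I a^{-1}f_ia \in x$; your final conclusion $f_{i_0} \in a(x)$ is the right one, and the paper's text has the typo $f_{i_0} \in x$ there.
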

	\begin{proof}
		First note that $\{aea^{-1} \mid e \in x\}$ is in fact a subset of idempotents, via the calculation:
		\begin{align*}
			aea^{-1}aea^{-1} & = aa^{-1}ae^2a^{-1} & \text{(since idempotents commute),} \\
			& = aea^{-1} & \text{(using that $aa^{-1}a = a$ and $e^2 = e$)}.
		\end{align*}
		Second, note that as $a^{-1}a \in x$, we have $\tgt(a) = aa^{-1} = aa^{-1}aa^{-1} \in a(x)$.
		
		Next, we show that $a(x)$ is closed under meets, which recall that for idempotents is the same thing as being closed under multiplication.  If $f \geqslant aea^{-1}$ and $f' \geqslant ae'a^{-1}$, where $e$ and $e'$ are both contained in $x$, then $ee' \in x$.  Hence, by a similar calculation as above, we have that $ff' \geqslant aea^{-1}ae'a^{-1} = a ee' a^{-1}$, and so $ff' \in a(x)$ too.
		
		Finally, we show that if $\bigvee_I f_i \in a(x)$, then $f_{i_0} \in x$ for some $i_0 \in I$.  Suppose that $aea^{-1} \leqslant \bigvee_I f_i $ for some $e \in x$.  Then we have $a^{-1}aea^{-1}a \leqslant a^{-1}\left(\bigvee_I f_i\right)a$.  Note that $a^{-1}aea^{-1}a = a^{-1}ae$ is contained in $x$, since $a^{-1}a \in x$, $e \in x$, and $x$ is closed under multiplication.  Hence, $ a^{-1}\left(\bigvee_I f_i\right)a$ is contained in $x$ as well.  By distributivity of multiplication with joins, we have
		\[
		\textstyle  a^{-1}\left(\bigvee_I f_i\right)a = \bigvee_I a^{-1}f_ia, 
		\]
		and so, as $x$ is completely prime, $a^{-1}f_{i_0}a$ is contained in $x$ for some $i_0 \in I$.  Thus, as $a a^{-1}f_{i_0}aa^{-1} \in a(x)$ and $a a^{-1}f_{i_0}aa^{-1}  \leqslant f_{i_0}$, we have that $f_{i_0} \in x$ as desired.
	\end{proof}
	Thus, each element $a$ of $A$ acts on the points of $E(A)$ by a partial map:
	\[
	\tau_a \colon \nbhd{\src(a)} \to \nbhd{\tgt(a)}.
	\]
	Note that, by taking the inverse, we also have a map $\tau_{a^{-1}} \colon \nbhd{\tgt(a)} \to \nbhd{\src(a)}$. 
	\begin{lem}
		The map $\tau_{a^{-1}}$ is inverse to $\tau_a \colon \nbhd{\src(a)} \to \nbhd{\tgt(a)}$; in particular, each $\tau_a$ is a partial bijection on $X$.
	\end{lem}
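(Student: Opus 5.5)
We show that $\tau_{a^{-1}}(\tau_a(x)) = x$ for every $x \in \nbhd{\src(a)}$. Applying the same statement to $a^{-1}$ in place of $a$ (using $(a^{-1})^{-1} = a$, $\src(a^{-1}) = \tgt(a)$ and $\tgt(a^{-1}) = \src(a)$) then gives $\tau_a(\tau_{a^{-1}}(y)) = y$ for every $y \in \nbhd{\tgt(a)}$. So the two maps are mutually inverse bijections between $\nbhd{\src(a)}$ and $\nbhd{\tgt(a)}$, and $\tau_a$ is a partial bijection on $X$. By the preceding lemma, $\tau_a(x) = a(x)$ lies in $\nbhd{\tgt(a)}$, so $\tau_{a^{-1}}$ can be applied to it.

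Fix $x \in \nbhd{\src(a)}$. By definition,
\[
a^{-1}(a(x)) = {\uparrow}\sett{a^{-1}fa}{f \in a(x)}.
\]

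For the inclusion $a^{-1}(a(x)) \subseteq x$, take $f \in a(x)$. Then $f \geqslant aea^{-1}$ for some $e \in x$. Conjugation by $a^{-1}$ is order preserving on idempotents: if $f \geqslant g$ then $f = g'$ with $g = fg$, so $a^{-1}ga = a^{-1}fga \leqslant a^{-1}fa$, using that $\leqslant$ is compatible with multiplication. Hence
\[
a^{-1}fa \geqslant a^{-1}aea^{-1}a = \src(a)\,e,
\]
where we use that idempotents commute and $a^{-1}aa^{-1}a = a^{-1}a$. Since $\src(a) \in x$ and $e \in x$, and $x$ is a filter, $\src(a)e \in x$, and so $a^{-1}fa \in x$. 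Upward closure of $x$ then gives $a^{-1}(a(x)) \subseteq x$.

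For the reverse inclusion $x \subseteq a^{-1}(a(x))$, take $e \in x$. Then $aea^{-1} \in a(x)$, so
\[
a^{-1}aea^{-1}a = \src(a)\,e \in a^{-1}(a(x)).
\]
Since $\src(a)\,e \leqslant e$, upward closure gives $e \in a^{-1}(a(x))$.

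No step here is a real obstacle. The only points needing care are:
\begin{itemize}
\item that conjugation is monotone on idempotents, as checked above;
\item the identity $a^{-1}aea^{-1}a = a^{-1}ae$, which was already used in the previous lemma;
\item that we use $\src(a) \in x$ to pass from $\src(a)e$ back into $x$.
\end{itemize}
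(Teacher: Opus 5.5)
Your proof is correct and follows the paper's argument. The paper also reduces $a^{-1}(a(x))$ to ${\uparrow}\sett{a^{-1}aea^{-1}a}{e \in x}$, uses $\src(a) \in x$ together with closure of $x$ under products and upward closure to identify this set with $x$, and then appeals to symmetry for the other composite. Your two-inclusion version simply makes explicit the monotonicity of conjugation that the paper uses silently; the stray ``$f = g'$'' there is a harmless slip, since $g = fg$ and compatibility of $\leqslant$ with multiplication suffice.
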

	\begin{proof}
		Let $x \in \nbhd{\src(a)}$.  Observe that 
		\begin{align*}a^{-1}a(x) & = {\uparrow}\sett{a^{-1}e'a}{e' \geqslant aea^{-1} \text{ where } e \in x}, \\
			& = {\uparrow}\sett{a^{-1}aea^{-1}a}{e \in x}.
		\end{align*}
		Since $x$ contains $a^{-1}a$ and is both closed under multiplication and upwards closed, we thus have that ${\uparrow}\sett{a^{-1}aea^{-1}a}{e \in x} = {\uparrow}\sett{e}{e \in x} = x$, i.e.\ $a^{-1}a(x) = x$ as desired.  By symmetry, $aa^{-1}(y) = y$ for all $y \in \nbhd{\tgt(a)}$, completing the proof.	
	\end{proof}
	\begin{prop}\label{prop:tau_is_homomorphism}
		Let $A$ be a pseudogroup and $X$ the set of points of the frame of idempotents $E(A)$.  The assignment $\tau \colon A \to \I(X)$ is a semigroup homomorphism.
	\end{prop}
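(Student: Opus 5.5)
We need to show that $\tau_{ab} = \tau_a \circ \tau_b$ as partial bijections on $X$, where composition in $\I(X)$ is relational composition. So two things must hold. First, the domains must agree: $\nbhd{\src(ab)}$ should equal $\tau_b^{-1}(\nbhd{\src(a)} \cap \nbhd{\tgt(b)})$, that is, the set of $x \in \nbhd{\src(b)}$ with $b(x) \in \nbhd{\src(a)}$. Second, on this common domain we need $(ab)(x) = a(b(x))$ as filters. Preservation of inverses then comes for free, since a semigroup homomorphism between inverse semigroups automatically preserves inverses; it is also already visible from the preceding lemma.

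The key computation is the identity on the common domain. Take $x \in \nbhd{\src(b)}$ with $b(x) \in \nbhd{\src(a)}$. By definition,
\[
a(b(x)) = {\uparrow}\sett{afa^{-1}}{f \in b(x)} = {\uparrow}\sett{abeb^{-1}a^{-1}}{e \in x}.
\]
The second equality holds because conjugation $f \mapsto afa^{-1}$ is order preserving, and $b(x)$ is the up-closure of the elements $beb^{-1}$. By \cref{prop:basic_props}\cref{enum:inv_of_product} we have $b^{-1}a^{-1} = (ab)^{-1}$, so the right-hand side is exactly $(ab)(x)$. Therefore, once the domains are shown to match, the images agree automatically.

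The main step is the domain comparison, which I would prove by two inclusions.

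\textbf{Inclusion $\supseteq$.} Let $x \in \nbhd{\src(b)}$ with $\src(a) \in b(x)$. Then $beb^{-1} \leqslant a^{-1}a$ for some $e \in x$. Conjugating by $b$ gives $b^{-1}beb^{-1}b \leqslant b^{-1}a^{-1}ab = \src(ab)$. The left-hand side equals $\src(b)e$, which lies in $x$ because $x$ is a filter containing both $\src(b)$ and $e$. Since $x$ is upward closed, $\src(ab) \in x$.

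\textbf{Inclusion $\subseteq$.} Let $\src(ab) = b^{-1}a^{-1}ab \in x$.
\begin{itemize}
\item To see $\src(b) \in x$: idempotents commute, so $b^{-1}a^{-1}ab = b^{-1}a^{-1}ab\,b^{-1}b \leqslant b^{-1}b$. Upward closure gives $\src(b) \in x$.
\item To see $\src(a) \in b(x)$: taking $e = b^{-1}a^{-1}ab \in x$, we compute $beb^{-1} = \tgt(b)\,\src(a)\,\tgt(b) = \src(a)\tgt(b) \leqslant \src(a)$. Hence $\src(a) \in b(x)$.
\end{itemize}

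This domain bookkeeping is the only place needing care; everything else is routine manipulation with commuting idempotents. With both inclusions established, $\tau_{ab} = \tau_a\tau_b$, so $\tau$ is a semigroup homomorphism.
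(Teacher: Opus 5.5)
Your proof is correct and follows essentially the same route as the paper: you compare the domains of $\tau_{ab}$ and $\tau_a\tau_b$ by two inclusions using the same idempotent computation (e.g.\ $b\,\src(ab)\,b^{-1} = \src(a)\tgt(b) \leqslant \src(a)$), and then observe that the images agree. The differences are minor. For the reverse inclusion, you conjugate the witness $beb^{-1} \leqslant \src(a)$ back by $b$ instead of invoking $b^{-1}(b(x)) = x$ from the preceding lemma. You also explicitly check $\src(b) \in x$, and you write out the equality $a(b(x)) = (ab)(x)$, which the paper calls immediate.
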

	\begin{proof}
		It remains to show that $\tau_{ab}$, for $a, b \in A$, is the composite of the partial bijections $\tau_a$ and $\tau_b$.  We first show that the domains of $\tau_{ab}$ and $\tau_a \tau_b$ coincide; recall that the domain of the former is $\sett{x \in X}{x \in \src(ab)}$ while the domain of the latter is the set $\sett{x \in X}{a(x) \in \nbhd{\src(b)}}$.  Suppose that $x \in X$ is contained in $\nbhd{\src(ab)}$, i.e.\ $b^{-1}a^{-1}ab \in x$, then $bb^{-1}a^{-1}abb^{-1} \in b(x)$, and since $bb^{-1}a^{-1}abb^{-1} \leqslant a^{-1}a$, we have $b(x)\in \nbhd{\src(a)}$ as desired.  Conversely, suppose that $b(x) \in \nbhd{\src(a)}$, so that $a^{-1}a \in b(x)$, in addition to $bb^{-1} \in b(x)$.  Thus, $bb^{-1}a^{-1}a \in b(x)$ and so $ b^{-1}b(x) = x$ contains $b^{-1}bb^{-1}a^{-1}ab = b^{-1}a^{-1}ab$, i.e.\ $x \in \nbhd{\src(ab)}$ as desired.  Now that we have that the domains coincide, it is immediate to see that $\tau_a \tau_b$ and $\tau_{ab}$ act in the same way on points in the domain.
	\end{proof}
	Note that the semigroup homomorphism $\tau \colon A \to \I(X)$ does not need to be a pseudogroup homomorphism nor faithful, and so it cannot serve alone as a representation of the pseudogroup $A$ by partial bijections.  For example, consider the case where $A$ is taken to be the pseudogroup $G + \{0\}$ induced from a (non-trivial) group $G$ (as in \cref{ex:G+0}).  In this case, the idempotents form the two-element frame, which has a unique point, which we denote by $\ast$.  Every non-zero element of $G + \{0\}$ has source and target $\ast$, and so every non-zero element of $G + \{0\}$ is sent by $\tau$ to the same map (the identity on $\ast$).  This is clearly not faithful, and moreover it can be checked that $\tau$ does not preserve meets.
	%%%%%%%%%%%%%%%%%%%%%%%%%%%%%%%%%%%%%%%%%%%%%%%%%%%%%%%%%%%
	\subsection{Adding a subquotient}\label{subsec:subquotient}
	We now rectify these defects to obtain a genuine pseudogroup embedding $A \hookrightarrow \I(Y)$ for some larger set $Y$.  Our strategy is to `duplicate' the points of $X$ so that we are able to distinguish between elements of $A$ who have the same source and target, thus avoiding the problem encountered in \cref{sec:action_on_points}.  As a result, our space $Y$ will automatically come equipped with a projection $q \colon Y \to X$; indeed, it is easier to intuit our construction of $Y$ in terms of the fibres of this map.  Informally, for each point $x \in X$, we wish the fibre $q^{-1}(x) \subseteq Y$ to represent all the other points of $X$ that map to $x$ using one of the partial bijections $\tau_a$, where $a \in A$.  This is captured by the following construction.
	\begin{df}\label{df:underlying_set}
		Let $A$ be a pseudogroup.
		\begin{enumerate}
			\item For each point $x$ of its frame of idempotents, we define $\sim_x$ to be the partial equivalence relation on $A$ given by $a \sim_x a'$ if and only if $x \in \nbhd{\tgt(a \land a')}$.
			\item Let $Y$ be the set consisting of pairs $(x,\eqr{a})$, where $\eqr{a}$ is an equivalence class for the partial equivalence relation $\sim_x$.
		\end{enumerate}
	\end{df}
	By a partial equivalence relation, we mean a relation on $A$ that is symmetric and transitive.  Thus, each partial equivalence relation $\sim_x$ defines an equivalence relation on the subset $\sett{a}{a \sim_x a} \subseteq A$, which is precisely the subset $\sett{a}{x \in \nbhd{\tgt(a)}} \subseteq A$.
%	\begin{lem}
%		The relation $\sim_x$ is, in fact, a partial equivalence relation.
%	\end{lem}
%	\begin{proof}
%		Symmetry is immediate, while transitivity follows from the inequality 
%		\begin{equation}\label{eq:trans}
%			\tgt(a \land a')\tgt(a' \land a'') \leqslant \tgt(a \land a' \land a'')
%		\end{equation}
%		(which is in fact an equality, as is easily shown), since $ \tgt(a \land a' \land a'') \leqslant \tgt(a \land a'')$ by \cite[Proposition 2.1.2]{lawson_txtbk}.  To show \cref{eq:trans}, we have that
%		\begin{align*}
%			& \tgt(a \land a')\tgt(a' \land a'')(a \land a') \land \tgt(a \land a')\tgt(a' \land a'')(a' \land a'')  \\
%			= \ & \tgt(a \land a')\tgt(a' \land a'')(a \land a' \land a'') \\
%			\leqslant \ & (a \land a' \land a''),
%		\end{align*}
%		from which, using \cite[Lemma 1.2.13 \& Proposition 2.1.2]{lawson_txtbk}, we deduce that
%		\begin{align*}
%		& \tgt(\tgt(a \land a')\tgt(a' \land a'')(a \land a') \land \tgt(a \land a')\tgt(a' \land a'')(a' \land a''))  \\
%		\leqslant \ & \tgt(\tgt(a \land a')\tgt(a' \land a'')\tgt(a \land a'))\tgt(\tgt(a \land a')\tgt(a' \land a'')\tgt(a' \land a'')) \\
%		= \ & 
%		\end{align*}
%	\end{proof}
	\begin{rem}
		The above partial equivalence relation should be familiar: it is modelled on the germ construction found in \cite[\S I]{resende_notes}.
	\end{rem}
	\begin{df}
		For each element $b \in A$, we define $\rho_b \colon Y \rightharpoondown Y$ as the partial function whose domain is the subset $\sett{(x,\eqr{a})}{x \in\nbhd{ \src(b)}} \subseteq Y$, and which acts by
		\[(x,\eqr{a}) \mapsto (b(x),\eqr{ba}).\]
	\end{df}
	\begin{lem}
		The partial function $\rho_b \colon Y \rightharpoondown Y$ is well-defined, i.e.\ if $a \sim_x a'$ then $ba \sim_{b(x)} ba'$, and moreover it is inverse to $\rho_{b^{-1}}$.
	\end{lem}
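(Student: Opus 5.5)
The plan has three steps: show that $\rho_b$ lands in $Y$, show that it respects the equivalence classes, and show that $\rho_{b^{-1}}$ undoes it.

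\emph{Step 1: $\rho_b$ lands in $Y$.} Take a point $(x,\eqr{a})$ in the domain, so $x \in \nbhd{\src(b)}$ and $x \in \nbhd{\tgt(a)}$. We need $b(x) \in \nbhd{\tgt(ba)}$. Since
\[\tgt(ba) = ba a^{-1} b^{-1} = b\,\tgt(a)\, b^{-1},\]
and $\tgt(a) \in x$, the element $b\,\tgt(a)\,b^{-1}$ belongs to $b(x)$ by the definition of the image. This settles the case $a = a'$.

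\emph{Step 2: well-definedness.} Suppose $a \sim_x a'$, i.e.\ $\tgt(a\land a') \in x$. We want $\tgt(ba \land ba') \in b(x)$. By \cref{prop:basic_props}\cref{enum:mult_dist_over_meet} we have $ba \land ba' = b(a \land a')$. Hence, as in Step 1,
\[\tgt(ba \land ba') = b\,\tgt(a\land a')\,b^{-1} \in b(x).\]
So $ba \sim_{b(x)} ba'$, and the class $\eqr{ba}$ is independent of the representative. Also $b(x) \in \nbhd{\tgt(b)}$ by the earlier lemma, so the image lies in the domain of $\rho_{b^{-1}}$, because $\src(b^{-1}) = \tgt(b)$.

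\emph{Step 3: $\rho_{b^{-1}}$ is inverse to $\rho_b$.} Compute
\[\rho_{b^{-1}}\rho_b(x,\eqr{a}) = (b^{-1}(b(x)),\ \eqr{b^{-1}ba}).\]
The earlier lemma shows that $\tau_{b^{-1}}$ is inverse to $\tau_b$, so $b^{-1}(b(x)) = x$. It remains to show $b^{-1}ba \sim_x a$, i.e.\ $\tgt(b^{-1}ba \land a) \in x$. Since $b^{-1}ba = \src(b)\,a \leqslant a$, the meet is $\src(b)\,a$ itself. Its target is
\[\src(b)\,a a^{-1}\src(b) = \src(b)\,\tgt(a),\]
using that idempotents commute. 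This lies in $x$ because $x$ contains both $\src(b)$ and $\tgt(a)$ and is closed under products.

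The same argument with $b$ and $b^{-1}$ swapped (using $(b^{-1})^{-1} = b$) gives the other composite. Hence $\rho_b$ and $\rho_{b^{-1}}$ are mutually inverse partial bijections between $\sett{(x,\eqr{a})}{x\in\nbhd{\src(b)}}$ and $\sett{(y,\eqr{c})}{y\in\nbhd{\tgt(b)}}$.

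The only point needing care is Step 2, the identity $b(a \land a') = ba \land ba'$, which is exactly \cref{prop:basic_props}\cref{enum:mult_dist_over_meet}. It also uses $\tgt(bc) = b\,\tgt(c)\,b^{-1}$, which is immediate from \cref{prop:basic_props}\cref{enum:inv_of_product}. Everything else reduces to routine filter manipulations already used for $\tau$.
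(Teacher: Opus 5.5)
Your proposal is correct and follows essentially the same route as the paper. It uses \cref{prop:basic_props}\cref{enum:mult_dist_over_meet} to get $\tgt(ba \land ba') = b\,\tgt(a\land a')\,b^{-1} \in b(x)$, and reduces $b^{-1}ba \sim_x a$ to $\src(b)\,\tgt(a) \in x$ via $b^{-1}ba \leqslant a$, exactly as the paper does; your separate Step 1 is just the case $a = a'$ of Step 2, which the paper leaves implicit.
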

	\begin{proof}
		Suppose that $x \in \nbhd{\tgt(a \land a')}$, i.e.\ $(a \land a')(a \land a')^{-1} \in x$.  Via the calculation
		\begin{align*}
			(ba \land ba')(ba \land ba')^{-1} & = (b(a \land a'))(b(a \land a'))^{-1} & \text{(using \cref{prop:basic_props}\cref{enum:mult_dist_over_meet}),} \\
			& = b(a \land a')(a \land a')^{-1} b^{-1} & \text{(using \cref{prop:basic_props}\cref{enum:inv_of_product}),}
		\end{align*}
		we deduce that $b(x) \in \nbhd{\tgt(ba \land ba')}$.
		
		Next, we need to show that $\rho_{b^{-1}} \circ \rho_b$ acts as the identity on all $(x,\eqr{a})$ with $x \in \nbhd{\src(b)}$; that is, we wish to show that $(x,\eqr{a}) = (b^{-1}b(x),\eqr{b^{-1}ba})$.  Equality in the first component, $x = b^{-1}b(x)$, follows from \cref{prop:tau_is_homomorphism}, so it remains to show that $a \sim_x b^{-1}ba$, i.e.\ $x \in \nbhd{\tgt(b^{-1}ba \land a)}$.  The first simplification to be made is that it suffices to show $x \in \nbhd{\tgt(b^{-1}ba)}$, as clearly $b^{-1}ba \leqslant a$ and so $b^{-1}ba \land a = b^{-1}ba$.  Note that, since $x \in \nbhd{\tgt(a)}$ in addition to $x \in \nbhd{\src(b)}$, we have that $aa^{-1} \in x$ and $b^{-1}b \in x$, and so, as $x$ is closed under multiplication, $ b^{-1}baa^{-1}b^{-1}b \in x$, i.e.\ $x \in \nbhd{\tgt(b^{-1}ba)}$ as desired.  By swapping $b^{-1}$ and $b$ in the above argument, we obtain the reverse equality: $\rho_b \circ \rho_{b^{-1}} (x,\eqr{a}) = (x,\eqr{a})$ for all $x \in \nbhd{\src(b^{-1})}$.
	\end{proof}
	%The first of the two main results of this paper is that the above defined assignment is the representation we seek.
	\begin{thm}\label{thm:rho_is_psdgrp_embedding}
		If $A$ is a pseudogroup with enough points, the assignment $\rho \colon A \to \I(Y)$ is an injective pseudogroup homomorphism.
	\end{thm}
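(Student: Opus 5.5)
The plan is to check the pseudogroup-homomorphism axioms one at a time, using the partial bijections $\tau_a$ of \cref{sec:action_on_points} for the first coordinate and the relations $\sim_x$ for the second. Injectivity is then deduced from \cref{lem:strict_ineq_implies_inject}, and only that last step uses the hypothesis that $A$ has enough points.

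\emph{Multiplication, unit and order.} First I show $\rho_{bc} = \rho_b\rho_c$. The domain of $\rho_{bc}$ is $\sett{(x,\eqr{a})}{x \in \nbhd{\src(bc)}}$. The domain of $\rho_b\rho_c$ is $\sett{(x,\eqr{a})}{x \in \nbhd{\src(c)},\ c(x) \in \nbhd{\src(b)}}$. These coincide by the domain computation in the proof of \cref{prop:tau_is_homomorphism}. On this common domain both maps send $(x,\eqr{a})$ to $(bc(x),\eqr{bca})$, again by \cref{prop:tau_is_homomorphism} together with associativity.

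For the unit: $1(x) = {\uparrow}\sett{e}{e\in x} = x$ and $1a=a$, so $\rho_1 = \id_Y$. For the zero: $\src(0)=0$ lies in no completely prime filter, so $\rho_0 = \emptyset$. Since $\rho$ is a semigroup homomorphism it is monotone for the natural order, because $a = be$ gives $\rho_a = \rho_b\rho_e$ with $\rho_e$ idempotent.

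\emph{Joins.} Let $S$ be jointly compatible with join $s$. Monotonicity gives $\bigcup_{b\in S}\rho_b \subseteq \rho_s$, so it suffices to compare domains. By \cref{prop:basic_props}\cref{enum:src_of_join}, $\src(s) = \bigvee_{b\in S}\src(b)$. Hence if $x \in \nbhd{\src(s)}$, complete primality of $x$ gives some $b \in S$ with $x \in \nbhd{\src(b)}$. Therefore every point of $\operatorname{dom}\rho_s$ already lies in $\operatorname{dom}\rho_b$, and on it $\rho_b$ agrees with $\rho_s$.

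\emph{Meets (the main obstacle).} Monotonicity gives $\rho_{b\land c} \subseteq \rho_b \cap \rho_c$. For the converse, suppose $\rho_b(x,\eqr{a}) = \rho_c(x,\eqr{a})$. Then in particular $ba \sim_{b(x)} ca$, that is, $\tgt(ba\land ca) \in b(x)$. By \cref{prop:basic_props}\cref{enum:mult_dist_over_meet}, $ba \land ca = (b\land c)a$. Since $x = b^{-1}b(x)$, conjugating by $b$ puts
\[
b^{-1}(b\land c)aa^{-1}(b\land c)^{-1}b
\]
into $x$. Because $b\land c \leqslant b$, \cref{prop:basic_props}\cref{enum:a_inv_a_and_b} gives $b^{-1}(b\land c) = \src(b\land c)$. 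So this idempotent equals $\src(b\land c)\,aa^{-1}\,\src(b\land c) \leqslant \src(b\land c)$. Upward closure of $x$ then gives $x \in \nbhd{\src(b\land c)}$, so $(x,\eqr{a})$ lies in the domain of $\rho_{b\land c}$, and there it agrees with $\rho_b$. This is exactly where the second coordinate is essential: as the $G+\{0\}$ example shows, $\tau$ alone fails here. The care needed is in the conjugation step and in checking $b^{-1}(b\land c)=\src(b\land c)$.

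\emph{Injectivity.} By \cref{lem:strict_ineq_implies_inject} it suffices to show that $a<b$ implies $\rho_a<\rho_b$. From $a = b\,\src(a)$ we get $\src(a) < \src(b)$, since equality would force $a=b$. By \cref{prop:equiv_spatiality}, using that $E(A)$ has enough points, there is a point $x$ with $\src(b)\in x$ and $\src(a)\notin x$. Since $\tgt(\src(b)) = \src(b) \in x$, the pair $y = (x,\eqr{\src(b)})$ is an element of $Y$. Then $\rho_b$ is defined at $y$ but $\rho_a$ is not, so $\rho_a \ne \rho_b$, and together with monotonicity this gives $\rho_a < \rho_b$.
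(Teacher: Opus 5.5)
Your proposal is correct and follows essentially the same route as the paper. It checks multiplication and unit via the domain computation of \cref{prop:tau_is_homomorphism}, then monotonicity, joins via complete primality of $x$, meets by pulling $\tgt((b\land c)a)$ back along $b$ and using $b^{-1}(b\land c)=\src(b\land c)$, and finally injectivity through \cref{lem:strict_ineq_implies_inject} with a separating point. The differences are cosmetic: you conjugate by $b^{-1}$ on the first coordinate instead of applying $\rho_{b^{-1}}$ to the pair, you use the witness $(x,\eqr{\src(b)})$ rather than $(x,\eqr{b^{-1}})$, and you spell out why $a<b$ forces $\src(a)<\src(b)$, which the paper leaves implicit.
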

	We break down the proof of this theorem into the following smaller steps: we first show that $\rho$ preserves multiplication and the unit, then that $\rho$ preserves binary meets, then that joins are also preserved, and finally that $\rho$ is injective when $A$ has enough points.
	\begin{prop}\label{prop:rho_is_semigrp_homo}
		The assignment $\rho \colon A \to \I(Y)$ is a unit-preserving semigroup homomorphism.
	\end{prop}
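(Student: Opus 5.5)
Two things must be checked: that $\rho_{bc} = \rho_b \circ \rho_c$ for all $b, c \in A$ as partial bijections on $Y$, and that $\rho_1$ is the identity on $Y$. Each $\rho_b$ is already a partial bijection, by the lemma showing $\rho_{b^{-1}}$ is its inverse, so the homomorphism property is all that remains.

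\emph{Domains.} The domain of $\rho_{bc}$ is $\sett{(x,\eqr{a})}{x \in \nbhd{\src(bc)}}$. The domain of $\rho_b \circ \rho_c$ is the set of $(x,\eqr{a})$ with $x \in \nbhd{\src(c)}$ and $c(x) \in \nbhd{\src(b)}$. Since the first component of $\rho_c(x,\eqr{a})$ is $\tau_c(x) = c(x)$, this second set depends only on $x$. It is exactly the domain of the composite $\tau_b\tau_c$ in $\I(X)$. By \cref{prop:tau_is_homomorphism} that domain equals the domain of $\tau_{bc}$, namely $\nbhd{\src(bc)}$. So the two domains agree, and I would simply cite the domain computation already carried out in the proof of \cref{prop:tau_is_homomorphism}.

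\emph{Action.} For $(x,\eqr{a})$ in this common domain,
\[
\rho_b\rho_c(x,\eqr{a}) = \rho_b(c(x),\eqr{ca}) = (b(c(x)),\eqr{b(ca)}),
\]
while $\rho_{bc}(x,\eqr{a}) = ((bc)(x),\eqr{(bc)a})$. The first components agree by \cref{prop:tau_is_homomorphism}. The second components agree by associativity: $b(ca) = (bc)a$ as elements of $A$, so their classes coincide. One should also confirm that the class $\eqr{(bc)a}$ is taken with respect to the correct relation $\sim_{(bc)(x)}$. This holds because $(bc)(x) = b(c(x))$, so the relation is the same one in both expressions.

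\emph{Unit.} $\src(1) = 1$ lies in every completely prime filter, so the domain of $\rho_1$ is all of $Y$. For any point $x$, $1(x) = {\uparrow}\sett{1e1}{e \in x} = x$, and $\eqr{1a} = \eqr{a}$. Hence $\rho_1 = \id_Y$.

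I expect no real obstacle here. The only point needing care is the domain equality, and it reduces entirely to the domain equality already established for $\tau$, because membership in the domain of $\rho_b$ depends only on the first coordinate.
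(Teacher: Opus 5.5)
Your proposal is correct and follows essentially the same route as the paper. You reduce the domain equality to the one already established for $\tau$ in \cref{prop:tau_is_homomorphism}, check the action componentwise using associativity, and verify $\rho_1 = \id_Y$ directly; your remarks that domain membership depends only on the first coordinate, and that both classes are taken with respect to the same relation $\sim_{(bc)(x)}$, just make explicit what the paper leaves implicit.
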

	\begin{proof}
		 We need to show that $\rho_{bb'} = \rho_b \rho_{b'}$, where the latter denotes the composition as partial bijections.
		 %, i.e.\ the composite $\rho_b \circ \rho_{b'}|_{\rho_{{b'}}^{-1}(\src(\rho_b))}$ where we have restricted the domain of $\rho_{b'}$ to those pairs $(x,\eqr{a})$ whose image under $\rho_{b'}$ lands in the domain of $\rho_b$.  
		 It is clear that if $(x,\eqr{a})$ is in the domain of $\rho_{bb'}$ and $\rho_b \rho_{b'}$, then $\rho_{bb'}(x,\eqr{a}) = (bb'(x),\eqr{bb'a}) = \rho_b \rho_{b'}(x,\eqr{a})$, and so it suffices to check that the domains of $\rho_{bb'}$ and $\rho_b \rho_{{b'}}$ agree.  This follows from the proof given in \cref{prop:tau_is_homomorphism}.
		 
		 Finally, it is easy to check that $\rho_1(x,\eqr{a}) = (1(x),\eqr{1a}) = (x,\eqr{a})$ and $x \in \nbhd{\src(1)}$ for all $x \in X$, i.e.\ that the unit is preserved.
	\end{proof}
	\begin{coro}[Lemma 2.1.6 \cite{lawson_txtbk}]\label{coro:monotone}
		The function $\rho \colon A \to \I(Y)$ is monotone with respect to the partial orderings.
	\end{coro}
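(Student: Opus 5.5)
The statement follows from the general fact, cited as \cite[Lemma 2.1.6]{lawson_txtbk}, that every semigroup homomorphism between inverse semigroups preserves the natural partial order. \cref{prop:rho_is_semigrp_homo} shows that $\rho$ is such a homomorphism, so I only need to spell out why that fact holds and apply it to $\rho$.

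Suppose $a \leqslant b$ in $A$. By definition of the natural order, $a = be$ for some idempotent $e \in E(A)$. Applying \cref{prop:rho_is_semigrp_homo} gives
\[
\rho_a = \rho_{be} = \rho_b \rho_e .
\]
Since $e = e^2$, we also have $\rho_e = \rho_e\rho_e$, so $\rho_e$ is an idempotent of $\I(Y)$. By \cref{ex:part_bijs}, this means $\rho_e$ is a restriction of the identity on $Y$. It follows that $\rho_a$ is the partial bijection $\rho_b$ precomposed with a partial identity. In other words, $\rho_a$ is a restriction of $\rho_b$, which is exactly the statement $\rho_a \leqslant \rho_b$ in $\I(Y)$.

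There is no real obstacle here. The only point to check is that the image of an idempotent is again idempotent. This uses only multiplicativity of $\rho$, not any preservation of the pseudogroup structure, so the argument needs nothing beyond \cref{prop:rho_is_semigrp_homo}.
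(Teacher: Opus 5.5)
Your proposal is correct and is essentially the paper's own argument: the paper gives no separate proof and simply cites \cite[Lemma 2.1.6]{lawson_txtbk} (semigroup homomorphisms between inverse semigroups preserve the natural order), applied via \cref{prop:rho_is_semigrp_homo}. You have spelled out the standard proof of that lemma directly in $\I(Y)$, and your composition convention, with $\rho_{be} = \rho_b \circ \rho_e$ a restriction of $\rho_b$, matches the paper's.
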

	\begin{lem}\label{lem:rho_preserves_meets}
		The semigroup homomorphism $\rho \colon A \to \I(Y)$ preserves binary meets.
	\end{lem}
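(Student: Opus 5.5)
The plan is to show that $\rho_{b \land b'} = \rho_b \cap \rho_{b'}$ as partial bijections on $Y$, i.e.\ as graphs. One inclusion is free. Since $b \land b' \leqslant b$ and $b \land b' \leqslant b'$, \cref{coro:monotone} gives $\rho_{b\land b'} \subseteq \rho_b$ and $\rho_{b \land b'} \subseteq \rho_{b'}$, hence $\rho_{b \land b'} \subseteq \rho_b \cap \rho_{b'}$. All the work is therefore in the reverse inclusion. Since $\rho_{b\land b'}$ is a restriction of $\rho_b$, it suffices to prove the following: whenever $(x,\eqr{a})$ lies in the domain of $\rho_b \cap \rho_{b'}$, the point $x$ lies in $\nbhd{\src(b\land b')}$.

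So fix $(x,\eqr{a})$ with $x \in \nbhd{\src(b)} \cap \nbhd{\src(b')}$ and $\rho_b(x,\eqr{a}) = \rho_{b'}(x,\eqr{a})$. Comparing second components, this says $ba \sim_{b(x)} b'a$, that is, $\tgt(ba \land b'a) \in b(x)$. Write $c = b \land b'$. By \cref{prop:basic_props}\cref{enum:mult_dist_over_meet} we have $ba \land b'a = ca$, and by \cref{prop:basic_props}\cref{enum:inv_of_product} we have $\tgt(ca) = caa^{-1}c^{-1}$. Hence
\[ caa^{-1}c^{-1} \in b(x). \]

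Next we pull this back along $b$. By the lemma that $\tau_{b^{-1}}$ inverts $\tau_b$, we have $x = b^{-1}(b(x))$. Unwinding the definition, $x$ contains $b^{-1}fb$ for every $f \in b(x)$; this uses that $b(x)$ is upward closed and that $b^{-1}(-)b$ is monotone. In particular
\[ b^{-1}caa^{-1}c^{-1}b \in x. \]
Since $c \leqslant b$, \cref{prop:basic_props}\cref{enum:a_inv_a_and_b} gives $b^{-1}c = c^{-1}c$, and, taking inverses, $c^{-1}b = c^{-1}c$. So the displayed element equals $c^{-1}c\,aa^{-1}\,c^{-1}c$. This is a product of commuting idempotents and lies below $c^{-1}c = \src(c)$. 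As $x$ is upward closed, $\src(b\land b') \in x$, as required.

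The one genuinely substantive step is this pullback of the target condition through $b$. The point to be careful about is that membership in $b(x)$ transfers to $x$ via $b^{-1}(-)b$, which follows directly from the preceding lemmas. Once the domain condition is established, $\rho_{b\land b'}(x,\eqr{a})$ is defined and agrees with $\rho_b(x,\eqr{a})$ by monotonicity. This yields $\rho_b \cap \rho_{b'} \subseteq \rho_{b\land b'}$ and completes the proof.
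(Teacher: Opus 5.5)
Your proof is correct and follows essentially the same route as the paper. You reduce to showing $x \in \nbhd{\src(b\land b')}$, obtain $b^{-1}(b\land b')aa^{-1}(b\land b')^{-1}b \in x$, and bound it by $\src(b\land b')$ using $b^{-1}(b\land b') = \src(b\land b')$. The only cosmetic difference is the order of steps: the paper first transports the equality of pairs back along $\rho_{b^{-1}}$ and then distributes $b^{-1}$ over the meet, whereas you distribute at $b(x)$ and pull back the single idempotent $\tgt((b\land b')a)$ via $b^{-1}(-)b$.
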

	\begin{proof}
		We aim to show that $\rho_b \cap \rho_{b'} = \rho_{b \land b'}$ for all $b , b' \in A$.  Since $\rho$ is monotone, one inclusion $\rho_{b \land b'} \subseteq \rho_b \cap \rho_{b'}$ is immediate.  To prove the converse, it suffices to show that the domain of $\rho_{b \land b'}$ contains the domain of $\rho_b \cap \rho_{b'}$.  Recall that an element $(x,\eqr{a})$ is in the domain of $\rho_b \cap \rho_{b'}$ if it is in the domain of $\rho_b$ and $\rho_{b'}$ and, moreover, $\rho_b(x,\eqr{a}) = \rho_{b'}(x,\eqr{a})$, that is, $(b(x),\eqr{ba}) = (b'(x),\eqr{b'a})$. 
		
		Suppose we are given such a pair $(x,\eqr{a})$.  Using that $(x, \eqr{a}) = \rho_{b^{-1}}\rho_b(x,\eqr{a})$, we have that $(x,\eqr{b^{-1}ba}) = (x,\eqr{b^{-1}b'a})$, and so $x \in \nbhd{\tgt(b^{-1}ba \land b^{-1}b'a)}$, or equivalently, by \cref{prop:basic_props}\cref{enum:mult_dist_over_meet}, $x \in\nbhd{ \tgt(b^{-1}(b \land b')a)}$; that is, we have $b^{-1}(b \land b')aa^{-1}(b \land b')^{-1}b \in x$.  Applying the equality $b^{-1}(b \land b') = (b \land b')^{-1}(b \land b')$ from \cref{prop:basic_props}\cref{enum:a_inv_a_and_b}, we obtain 
		\begin{align*}
			b^{-1}(b \land b')aa^{-1}(b \land b')^{-1}b & = (b \land b')^{-1}(b \land b')aa^{-1}(b \land b')^{-1}(b \land b') \\
			& = aa^{-1}(b \land b')^{-1}(b \land b')
			\leqslant (b \land b')^{-1}(b \land b').
		\end{align*}
		Thus $x \in \nbhd{\src(b \land b')}$, i.e.\ $(x, \eqr{a}) $ is in the domain of $\rho_{b\land b'}$ as desired.
	\end{proof}
	\begin{lem}\label{lem:rho_preserves_joins}
		The semigroup homomorphism $\rho \colon A \to \I(Y)$ preserves those joins that exist in $A$.
	\end{lem}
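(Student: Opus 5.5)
Let $b = \bigvee_I b_i$ be a join that exists in $A$; by the remark after \cref{df:psdgrp}, $\sett{b_i}{i \in I}$ is jointly compatible. The goal is $\rho_b = \bigcup_I \rho_{b_i}$ in $\I(Y)$. Since $\rho$ is monotone (\cref{coro:monotone}), each $\rho_{b_i} \subseteq \rho_b$. So the family $\sett{\rho_{b_i}}{i\in I}$ is compatible in $\I(Y)$, being restrictions of the single partial bijection $\rho_b$, and $\bigcup_I \rho_{b_i} \subseteq \rho_b$. Since every $\rho_{b_i}$ is a restriction of $\rho_b$, equality follows as soon as the domains agree. So I only need: if $(x,\eqr{a})$ lies in the domain of $\rho_b$, then it lies in the domain of some $\rho_{b_i}$.

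The domain condition depends only on the first coordinate: $(x,\eqr{a})$ is in the domain of $\rho_c$ exactly when $x \in \nbhd{\src(c)}$. So suppose $\src(b) \in x$. By \cref{prop:basic_props}\cref{enum:src_of_join}, $\src(b) = \bigvee_I \src(b_i)$ as a join of idempotents in the frame $E(A)$. Since $x$ is a completely prime filter, $\src(b_{i_0}) \in x$ for some $i_0$, that is, $x \in \nbhd{\src(b_{i_0})}$. Hence $(x,\eqr{a})$ is in the domain of $\rho_{b_{i_0}}$, and it is sent to the same value as under $\rho_b$.

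I expect no real obstacle. The one point to check is that $\src(b) = \bigvee \src(b_i)$ is computed in $E(A)$, so that complete primeness applies. The join in $A$ of idempotents is idempotent and is also their join in $E(A)$, so this holds. The nullary case $b = 0$ is included: the domain of $\rho_0$ is empty because $0 \notin x$, as $x$ is completely prime with respect to the empty join.
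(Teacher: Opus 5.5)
Your proof is correct and matches the paper's own argument: monotonicity gives $\bigcup_I \rho_{b_i} \subseteq \rho_{\bigvee_I b_i}$, and the reverse inclusion of domains follows from $\src(\bigvee_I b_i) = \bigvee_I \src(b_i)$ together with the complete primeness of $x$. Your extra checks, that this join is computed in $E(A)$ and that the empty join is covered, are valid details the paper leaves implicit.
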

	\begin{proof}
		Let $\{b_i \mid i \in I\} \subseteq A$ be a jointly compatible subset of $A$, and so the join $\bigvee_I b_i$ exists in $A$.  We aim to show that $\rho_{\bigvee_I b_i} = \bigcup_I \rho_{b_i}$.  As $\rho$ is monotone, one inclusion $\bigcup_I \rho_{b_i} \subseteq \rho_{\bigvee_I b_i}$ is immediate.  It remains to show that every element $(x,\eqr{a})$ in the domain of $\rho_{\bigvee_I b_i}$ is in the domain of $\rho_{b_{i_0}}$, for some $i_0 \in I$.
		%and moreover $\rho_{b_{i_0}}(x,\eqr{a}) = \rho_{\bigvee_I b_i}(x,\eqr{a})$.  
		
		Let $(x,\eqr{a})$ be in the domain of $\rho_{\bigvee_I b_i}$.  Recall from \cref{prop:basic_props}\cref{enum:src_of_join} that $\src(\bigvee_I b_i) = \bigvee_I b_i^{-1}b_i$, and so $\bigvee_I b_i^{-1}b_i \in x$.  As $x$ is completely prime, $b_{i_0}^{-1}b_{i_0} \in x$ for some $i_0 \in I$, and so $(x,\eqr{a})$ is in the domain of $\rho_{b_{i_0}}$.  
	\end{proof}
	The previous results, \cref{prop:rho_is_semigrp_homo}, \cref{lem:rho_preserves_meets} and \cref{lem:rho_preserves_joins}, did not require that $A$ has enough points; we only invoke this hypothesis now for the last step.
	\begin{lem}\label{prop:rho_preserves_strict_ineq}
		If $A$ has enough points, then $\rho \colon A \to \I(Y)$ preserves the strict natural order.
	\end{lem}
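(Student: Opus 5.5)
Suppose $b < b'$ in $A$. Monotonicity (\cref{coro:monotone}) already gives $\rho_b \subseteq \rho_{b'}$, so it remains to find a point of $Y$ where $\rho_{b'}$ is defined but $\rho_b$ is not. Since $\rho_b$ is the restriction of $\rho_{b'}$, this means finding $(x,\eqr{a}) \in Y$ with $x \in \nbhd{\src(b')}$ but $x \notin \nbhd{\src(b)}$. So the problem reduces to the frame $E(A)$: we need a point $x$ with $\src(b') \in x$ and $\src(b) \notin x$, and then some $a$ with $\tgt(a) \in x$ so that the fibre over $x$ is non-empty. The second part is easy: take $a = 1$, since $\tgt(1) = 1 \in x$ for every point. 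Thus $(x,\eqr{1}) \in Y$ always.

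For the first part, first check that $\src(b) < \src(b')$ strictly. Monotonicity of $\src$ gives $\src(b) \leqslant \src(b')$. If $\src(b) = \src(b')$, then from $b \leqslant b'$ we get $b = b'\src(b) = b'\src(b') = b'$, a contradiction. This uses the standard fact that $a \leqslant c$ implies $a = c\,a^{-1}a$, which follows from \cref{prop:basic_props}\cref{enum:a_inv_a_and_b}: $a = aa^{-1}a$, and $c\,a^{-1}a = c\,c^{-1}a$, with $cc^{-1}a = a$ whenever $a = fc$ for an idempotent $f$.

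Now $\src(b) \neq \src(b')$ are distinct elements of the spatial frame $E(A)$. By \cref{prop:equiv_spatiality} there is a point $x$ separating them. Since $\src(b) \leqslant \src(b')$ and points are upward closed, the only possibility is $\src(b') \in x$ and $\src(b) \notin x$. Then $(x,\eqr{1})$ lies in the domain of $\rho_{b'}$ but not in that of $\rho_b$, so $\rho_b \neq \rho_{b'}$ and hence $\rho_b < \rho_{b'}$.

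The only real content is reducing strictness in $A$ to strictness of sources, and this is immediate. The main point to watch is that the fibre over the separating point is non-empty, which the choice $a = 1$ guarantees.
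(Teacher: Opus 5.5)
Your proof is correct and follows the paper's route: you separate $\src(b)<\src(b')$ by a point $x$ of the spatial frame $E(A)$, then exhibit an element of $Y$ over $x$ that lies in the domain of $\rho_{b'}$ but not in that of $\rho_b$. The differences are cosmetic: the paper uses the witness $(x,\eqr{{b'}^{-1}})$ where you use $(x,\eqr{1})$, both valid since $\src(b')\in x$ and $1\in x$, and the paper asserts $\src(b)<\src(b')$ without justification, whereas you verify it via $b=b'\src(b)$.
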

	\begin{proof}
		Suppose that $a < b$, so that $a^{-1}a < b^{-1}b$.  Then since the frame of idempotents $E(A)$ has enough points, there is some point $x$ of $E(A)$ that separates $a^{-1}a$ and $b^{-1}b$, i.e.\ $x \in b^{-1}b$ and $x \not \in a^{-1}a$.  Thus, $(x,\eqr{b^{-1}})$ is a pair contained in the domain of $\rho_b$, but not in the domain of $\rho_a$.  Hence, $\rho_a \subsetneq \rho_b$ as desired.
	\end{proof}
	\begin{coro}\label{prop:rho_is_injective}
		If $A$ has enough points, the pseudogroup homomorphism $\rho \colon A \to \I(Y)$ is injective.
	\end{coro}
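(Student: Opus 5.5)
This corollary follows directly by combining results already established. By \cref{prop:rho_is_semigrp_homo}, \cref{lem:rho_preserves_meets} and \cref{lem:rho_preserves_joins}, the assignment $\rho \colon A \to \I(Y)$ preserves multiplication, the unit, binary meets and all existing joins. It is therefore a pseudogroup homomorphism, and in particular a meet semigroup homomorphism between the meet semigroups $A$ and $\I(Y)$. Note that $A$ has binary meets by \cref{prop:consequences}\cref{enum:binary_meets_in_psdgrp}, and in $\I(Y)$ meets are intersections of graphs.

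The plan is to invoke \cref{lem:strict_ineq_implies_inject}. Its only remaining hypothesis is that $\rho$ preserves the strict natural order. Under the assumption that $A$ has enough points, this is exactly \cref{prop:rho_preserves_strict_ineq}. Injectivity of $\rho$ is then immediate.

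I expect no real obstacle here, since the substantive work sits in \cref{prop:rho_preserves_strict_ineq}. There, one uses that $a<b$ forces $\src(a)<\src(b)$. This holds because $a=b\,\src(a)$, so equality of sources would give $a=b$. Spatiality (\cref{prop:equiv_spatiality}) then supplies a separating point. For the present corollary, all that remains is to check that the hypotheses of \cref{lem:strict_ineq_implies_inject} are met, which is done above. Together with the preceding results, this completes the proof of \cref{thm:rho_is_psdgrp_embedding}, and hence the sufficiency direction of \cref{mainthm:enough_pts}.
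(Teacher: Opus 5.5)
Your proposal is correct and is the paper's proof: the paper likewise combines \cref{lem:rho_preserves_meets} (so $\rho$ is a meet semigroup homomorphism) and \cref{prop:rho_preserves_strict_ineq} with \cref{lem:strict_ineq_implies_inject}. Your side remark that $a<b$ forces $\src(a)<\src(b)$, because $a=b\,\src(a)$, is also correct and fills in a step the paper leaves implicit.
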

	\begin{proof}
		By combining \cref{lem:rho_preserves_meets} and \cref{prop:rho_preserves_strict_ineq} with \cref{lem:strict_ineq_implies_inject}.
	\end{proof}
	Combined together, \cref{prop:rho_is_semigrp_homo}, \cref{lem:rho_preserves_meets}, \cref{lem:rho_preserves_joins} and \cref{prop:rho_is_injective} constitute a proof of \cref{thm:rho_is_psdgrp_embedding}.  Now the two statements of \cref{thm:rho_is_psdgrp_embedding} and \cref{prop:psdgrp_emb_implies_enough_pts} together entail \cref{mainthm:enough_pts}, our characterisation of pseudogroups of transformations.
	\begin{ex}\label{ex:rho_for_frame}
		If $A$ is a pseudogroup in which every element is idempotent, i.e.\ $A$ is a frame, then in fact the set $Y$ described in \cref{df:underlying_set} and the set $X$ of points of $E(A) = A$ are isomorphic.  It is easily checked that, for each $x \in X$ and $a \in A$, $a \sim_x a$ if and only if $a \in x$, and $a \sim_x a'$ for all $a, a' \in x$; that is to say, there is a unique equivalence class for the partial equivalence relation $\sim_x$, and so $X \cong Y$.
		
		Clearly, modulo this isomorphism, the representations $\tau \colon A \to \I(X)$ and $\rho \colon A \to \I(Y)$ from \cref{prop:tau_is_homomorphism} and \cref{thm:rho_is_psdgrp_embedding} are identical; the representation acts by sending an element $a \in A$ to the restriction of the identity on $X$ to the subset of those points $x \in \nbhd{a}$.  In other words, the representation is the composition of the canonical frame homomorphism $A \to \P(X)$ from a frame to the powerset frame on its points with the inclusion $\P(X) \subseteq \I(X)$ of the idempotents (\cite[\S II.4-5]{picado_pultr}).  Recall that the map $A \to \P(X)$ is injective if and only if $A$ is spatial.
	\end{ex}
	\begin{ex}\label{ex:rho_for_group}
		Consider now the pseudogroup $G + \{0\}$ derived from a group $G$, as described in \cref{ex:G+0}.  Recall that, in this setting, there is a unique point $\ast$ of the frame of idempotents.  The partial equivalence relation $\sim_\ast$ on $G + \{0\}$ is defined on the subset $G \subseteq G + \{0\}$, on which it is trivial, i.e.\ $g \sim_x h$ if and only if $g = h$.  Thus, the set $Y$ defined in \cref{df:underlying_set} is isomorphic to $G$.
		
		It follows that $\rho \colon G + \{0\} \to \I(G)$ sends a group element $g \in G$ to the total bijection $\rho_g \colon G \to G$ given by $\rho_g(h) = gh$; that is, the restriction of $\rho$ to $G \subseteq G + \{0\}$ is precisely the representation of $G$ by total bijections on a set found in Cayley's theorem.  The remaining element, $0 \in G + \{0\}$, is sent by $\rho$ to the empty partial bijection on $G$.
	\end{ex}
	\subsection{Faithful representation of distributive inverse semigroups}\label{subsec:dist}
	The derived operations on partial symmetries captured by the notion of pseudogroup are infinitary in nature, in that we can take joins of compatible subsets of any arity.  However, there is also a natural finitary variant, where we only allow for finitary joins.  This is captured by the notion of an \emph{distributive inverse semigroups}, which first appeared in \cite[\S 2]{kaarli_marki}.  We recall here a modified version of the definition, before applying \cref{mainthm:enough_pts} to show that every such distributive inverse semigroup admits a faithful representation by partial bijections.
	\begin{df}\label{df:dist_inv_smgrp}
		A \emph{distributive inverse semigroup} $A$ is an inverse semigroup satisfying the following:
		\begin{enumerate}
			\item $A$ has both a zero element and a unit,
			\item $A$ has all binary meets with respect to the natural order,
			\item and each \emph{compatible pair}, i.e.\ elements $a, b \in A$ where $ab^{-1}$ and $a^{-1}b$ are idempotent, has a join in the natural order, and multiplication distributes over these joins on the left and right.
		\end{enumerate}
		A homomorphism of distributive inverse semigroups $A \to B$ is a semigroup homomorphism that preserves the above structure.
	\end{df}
	\begin{rem}
		Note that \cite[\S 2]{kaarli_marki} does not ask for binary meets in their definition of a distributive inverse semigroup, while the definition found in \cite[\S 2.6]{lawson_txtbk} asks only for a zero element and binary joins of compatible pairs.  The theory we develop can be made to work in this more general setting, but for convenience we assume existence of binary meets, etc.
	\end{rem} 
	In particular, every pseudogroup is a distributive inverse semigroup, and pseudogroup homomorphisms are distributive inverse semigroup homomorphisms, i.e.\ there is a forgetful functor from pseudogroups to distributive inverse semigroups.  As shown in \cite{lawson_lenz}, this forgetful functor has a left adjoint: there is a free completion to a pseudogroup for any distributive inverse semigroup.
	\begin{prop}[Proposition 3.1 \cite{lawson_lenz}]\label{prop:ideal_compl}
		For every distributive inverse semigroup $A$, there is an injective homomorphism of distributive inverse semigroups $\sigma \colon A \hookrightarrow \Idl(A)$.
	\end{prop}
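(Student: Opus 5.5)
We take $\Idl(A)$ to be the set of \emph{compatible order ideals} of $A$: subsets $I \subseteq A$ that are downward closed in the natural order, contain $0$, are closed under joins of compatible pairs, and are jointly compatible. The map $\sigma$ will be the principal-ideal embedding $\sigma(a) = {\downarrow} a$. The proof has two parts: first, $\Idl(A)$ is a pseudogroup; second, $\sigma$ is an injective homomorphism of distributive inverse semigroups.

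For the first part, we define the product of ideals by $IJ = {\downarrow}\sett{\bigvee F}{F \subseteq \sett{ij}{i \in I, j \in J} \text{ finite and compatible}}$, which is the ideal generated by the pointwise products, and the inverse by $I^{-1} = \sett{i^{-1}}{i \in I}$. We then check the following in order.
\begin{enumerate}
\item Associativity, and $IJ$ is again a compatible ideal. Compatibility of products of elements drawn from compatible sets is a standard inverse semigroup calculation.
\item The idempotents are exactly the ideals contained in $E(A)$. Under the identification $I \mapsto I$, these form the frame of ideals of the distributive lattice $E(A)$.
\item Joins of jointly compatible families of ideals are the ideals generated by their unions.
\item Multiplication distributes over these joins. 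This holds because any element of $(\bigvee I_k)J$ lies below a finite compatible join of products coming from finitely many $I_k$, and then finite distributivity in $A$ applies.
\end{enumerate}
Since \cite[Lemma 2.5.1]{lawson_txtbk} says that an inverse semigroup is a pseudogroup exactly when its idempotents form a frame (and the needed joins exist and distribute), checking the idempotents in step (ii) suffices for the frame condition.

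For the second part, we check the properties of $\sigma$ one at a time.
\begin{enumerate}
\item $\sigma$ is injective because principal ideals determine their generator.
\item $\sigma$ preserves multiplication: ${\downarrow}a \cdot {\downarrow}b = {\downarrow}(ab)$, since every product $a'b'$ with $a' \leqslant a$ and $b' \leqslant b$ lies below $ab$, and finite compatible joins of such products stay below $ab$.
\item $\sigma$ preserves $0$ and $1$, since ${\downarrow}0 = \{0\}$ and ${\downarrow}1$ is the identity ideal.
\item $\sigma$ preserves binary meets, since ${\downarrow}a \cap {\downarrow}b = {\downarrow}(a \land b)$ and intersection is the meet of ideals.
\item $\sigma$ preserves joins of compatible pairs: the ideal generated by ${\downarrow}a \cup {\downarrow}b$ is ${\downarrow}(a \vee b)$, because it must contain $a \vee b$ and everything in it lies below $a \vee b$.
\end{enumerate}

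The main obstacle is showing that the ideal product is well behaved. Concretely, we must show that the set generated by finite compatible joins of products is jointly compatible and downward closed, and that distributivity holds in $\Idl(A)$. The key lemma here is that if $F$ and $G$ are compatible sets, then $FG$ is compatible. We would first try to prove this directly, as in \cite[Lemma 1.4.11]{lawson_txtbk}, using $(ab)^{-1}(cd) = b^{-1}(a^{-1}c)d$, which is a product of the form $b^{-1}ed$ with $e$ idempotent, and then use the compatibility of $b$ and $d$. If the direct check becomes unwieldy, the fallback is to cite \cite[Proposition 3.1]{lawson_lenz}, which is precisely the statement being proved here.
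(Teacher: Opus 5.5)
Your proposal is correct and follows essentially the paper's route: the paper uses the same $\Idl(A)$ (compatible, downward-closed, $\vee$-closed subsets) with $\sigma(a) = {\downarrow}a$, cites \cite[Proposition 3.1]{lawson_lenz} for the pseudogroup structure and the embedding (your fallback), and checks by direct inspection that ${\downarrow}1$ is the unit and that ${\downarrow}a \cap {\downarrow}b = {\downarrow}(a \land b)$, just as in your items (iii) and (iv). Your requirement that ideals contain $0$ is worth keeping, because it excludes $\emptyset$, which would otherwise be the zero of $\Idl(A)$ and prevent $\sigma(0) = \{0\}$ from being the zero.
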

	The elements of the pseudogroup $\Idl(A)$ consist of the downward closed, jointly compatible subsets $S \subseteq A$ which are $\lor$-\emph{closed}, in the sense that if $a, b \in S$ then $a \lor b \in S$ too. Such a subset $S \subseteq A$ is called an \emph{ideal}, in analogy with the ideals encountered in order theory (\cite[Definition 2.20]{davey_priestley}).  The embedding $\sigma$ sends an element $a \in A$ to the down segment ${\downarrow} a = \sett{b \in A}{b \leqslant a} \subseteq A$ of $a$ in the natural order.  Although the fact is not explicitly mentioned in \cite[\S 3.1]{lawson_lenz} or \cite[\S 2.7]{lawson_txtbk}, in which the ideal construction is discussed, it follows by direct inspection that the unit $1 \in A$ is preserved, i.e.\ ${\downarrow}1$ is a unit for $\Idl(A)$, and similarly $\sigma$ preserves binary meets, i.e.\ ${\downarrow} a \cap {\downarrow} b = {\downarrow}(a \land b)$. (Thus, even though our distributive inverse semigroups possess more structure than those considered in \cite{lawson_lenz} or \cite{lawson_txtbk}, the same construction still applies.)

	Observe that the frame of idempotents $E(\Idl(A))$ is also the \emph{ideal completion} of the distributive lattice $E(A)$, i.e.\ $E(\Idl(A)) = \Idl(E(A))$ (this follows from the identification of idempotents of $\Idl(A)$ found in the proof of \cite[Theorem 2.5.6]{lawson_txtbk}), and so $E(\Idl(A))$ is a \emph{coherent frame} in the terminology of \cite[\S II.3.2]{johnstone_stone}.  Assuming a weak choice principle, at least as strong as the ultrafilter lemma, the frame $\Idl(E(A))$ is spatial (\cite[Theorem II.3.4]{johnstone_stone}).  Thus, by \cref{mainthm:enough_pts}, $\Idl(A)$ admits a pseudogroup embedding $\Idl(A) \hookrightarrow \I(X)$ for some set $X$.  Composing this embedding $\Idl(A) \hookrightarrow \I(X)$ with the ideal completion $A \hookrightarrow \Idl(A)$ from \cref{prop:ideal_compl} yields the following.
	\begin{coro}\label{coro:repr_dist_invsgrp}
		Every distributive inverse semigroup $A$ admits a faithful representation by partial bijections $\rho \colon A \hookrightarrow \I(X)$ on some set $X$.
	\end{coro}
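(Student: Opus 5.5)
The plan is to compose two embeddings, exactly as indicated in the paragraph preceding the statement. First, I apply \cref{prop:ideal_compl} to obtain the ideal completion $\sigma \colon A \hookrightarrow \Idl(A)$, which sends $a$ to ${\downarrow}a$. I then check, by direct inspection, that $\sigma$ preserves the full structure of \cref{df:dist_inv_smgrp}, namely multiplication, zero, unit, binary meets and joins of compatible pairs. Multiplication and compatible binary joins are covered by \cite{lawson_lenz}. The unit requires showing that ${\downarrow}1$ is a unit of $\Idl(A)$. Every ideal $S$ satisfies $S\cdot{\downarrow}1 = S$, because $s = s\cdot 1$ and $S$ is downward closed. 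For meets, the identity ${\downarrow}a \cap {\downarrow}b = {\downarrow}(a\land b)$ is immediate once one knows that meets in $\Idl(A)$ are computed as intersections of ideals. I would verify that last point: the intersection of two ideals is again downward closed, compatible and $\lor$-closed, and it is the largest ideal contained in both.

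Second, I check that $\Idl(A)$ has enough points. The idempotents of $\Idl(A)$ are the ideals consisting of idempotents, so $E(\Idl(A)) \cong \Idl(E(A))$, the ideal frame of the distributive lattice $E(A)$. This is a coherent frame. By the prime ideal theorem (ultrafilter lemma), coherent frames are spatial \cite[Theorem II.3.4]{johnstone_stone}. \cref{thm:rho_is_psdgrp_embedding} (equivalently \cref{mainthm:enough_pts}) then gives an injective pseudogroup homomorphism $\rho' \colon \Idl(A) \hookrightarrow \I(X)$, where $X$ is the set constructed in \cref{df:underlying_set} for the pseudogroup $\Idl(A)$.

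Finally, I set $\rho = \rho' \circ \sigma$. It is injective as a composite of injections. It is a homomorphism of distributive inverse semigroups because $\rho'$, being a pseudogroup homomorphism, preserves multiplication, the unit, binary meets and all joins, and in particular preserves zero (the empty join) and binary compatible joins, while $\sigma$ preserves each of these operations by the first step.

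The main obstacle is the identification $E(\Idl(A)) \cong \Idl(E(A))$ together with the verification that meets and the unit are preserved by $\sigma$, since \cite{lawson_lenz} does not treat these explicitly. The key point is that an ideal $S$ of $A$ is idempotent in $\Idl(A)$ exactly when $S \subseteq E(A)$. One direction holds because an idempotent ideal lies below ${\downarrow}1$, and the elements below $1$ are idempotents. For the other, if $S \subseteq E(A)$ then $S\cdot S = S$, using that idempotents commute and that $e = ee$. Finite joins of idempotents within $A$ then correspond to the frame structure on $\Idl(E(A))$. The spatiality step depends on a choice principle, so I would state the result under the ultrafilter lemma, as the paper's preceding remark does.
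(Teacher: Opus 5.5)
Your proposal is correct and is essentially the paper's own argument: compose the ideal completion $\sigma \colon A \hookrightarrow \Idl(A)$ with the embedding $\Idl(A) \hookrightarrow \I(X)$ from the main theorem, using that $E(\Idl(A)) = \Idl(E(A))$ is a coherent frame and hence spatial under the ultrafilter lemma. Your explicit checks that ${\downarrow}1$ is the unit, that meets of ideals are intersections, and that the idempotent ideals are exactly those contained in $E(A)$ fill in the ``direct inspection'' that the paper leaves to the reader.
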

	An immediate consequence of \cref{coro:repr_dist_invsgrp} is the following, which formalises the common intuition of the practising semigroup theorist -- if an equation holds for all partial bijections, then it holds for all elements in a pseudogroup/distributive inverse semigroup.
	\begin{thm}\label{coro:part_bij_are_fin_complete}
		Let $\phi(z_1, \dots , z_n)$ and $\psi(z_1, \dots , z_n)$ be terms expressible in the language $\{\,\cdot\,, 0, 1, \land , \lor,{\, }^{-1}\}$, where $z_1, \dots , z_n$ are free variables.  Suppose that for any set $X$, the pseudogroup satisfies the sentence 
		\[\forall \, f_1 , \dots , f_n \in \I(X) . \, \phi(f_1, \dots , f_n) \leqslant \psi(f_1, \dots , f_n).\] 
		Then any distributive inverse semigroup $A$ also satisfies the sentence 
		\[\forall \, a_1, \dots , a_n \in A. \, \phi(a_1, \dots , a_n) \leqslant \psi(a_1, \dots , a_n).\]
	\end{thm}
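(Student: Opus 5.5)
The plan is to transport the inequality along the faithful representation of \cref{coro:repr_dist_invsgrp}. Let $A$ be a distributive inverse semigroup and let $\rho \colon A \hookrightarrow \I(X)$ be the injective homomorphism of distributive inverse semigroups given there.

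The first point to settle is what the terms mean. In $A$ the join $\lor$ is only a partial operation, defined on compatible pairs. So I read the sentence for $A$ as ranging over tuples $a_1,\dots,a_n$ for which every subterm $s \lor t$ of $\phi$ and $\psi$ is evaluated on a compatible pair. Equivalently, both sides are defined. This is the only reading under which the statement makes sense. In $\I(X)$ the same convention applies, with $\lor$ given by union of compatible partial bijections.

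\textbf{Step 1.} I show that $\rho$ commutes with term evaluation. The claim is that for every term $\theta$ of the language and every tuple $\bar a$ on which $\theta(\bar a)$ is defined, $\theta(\rho(a_1),\dots,\rho(a_n))$ is defined and
\[\rho(\theta(a_1,\dots,a_n)) = \theta(\rho(a_1),\dots,\rho(a_n)).\]
This is a straightforward induction on $\theta$.
\begin{itemize}
\item $\rho$ preserves multiplication, $0$, $1$ and binary meets by definition.
\item $\rho$ preserves inverses, since any semigroup homomorphism between inverse semigroups does: $\rho(a^{-1})$ satisfies the defining equations for $\rho(a)^{-1}$, and inverses are unique.
\item For a join $s\lor t$, the pair $s(\bar a), t(\bar a)$ is compatible. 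Compatibility is expressed by the equations $xy^{-1}$ and $x^{-1}y$ being idempotent, and these equations are preserved by $\rho$. So the images form a compatible pair in $\I(X)$, and $\rho$ preserves their join by hypothesis.
\end{itemize}

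\textbf{Step 2.} I show that $\rho$ reflects the natural order. Since $\rho$ preserves meets and is injective, we have $a \leqslant b$ iff $a \land b = a$ iff $\rho(a)\land\rho(b) = \rho(a)$ iff $\rho(a) \leqslant \rho(b)$. Here the meet in $\I(X)$ is the intersection $\cap$.

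\textbf{Step 3.} I conclude. Given a tuple $\bar a$ in $A$, the hypothesis applied to the tuple $f_i = \rho(a_i)$ in $\I(X)$ gives $\phi(\rho\bar a) \leqslant \psi(\rho\bar a)$. By Step 1 this says $\rho(\phi(\bar a)) \leqslant \rho(\psi(\bar a))$. By Step 2 it follows that $\phi(\bar a)\leqslant\psi(\bar a)$.

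The only delicate step is Step 1, specifically the handling of the partial operation $\lor$. The key fact there is that compatibility is equationally defined and hence preserved. Everything else is routine once \cref{coro:repr_dist_invsgrp} is available. The real content, namely spatiality of the coherent frame $\Idl(E(A))$ via the ultrafilter lemma together with \cref{mainthm:enough_pts}, is already packaged into that corollary.
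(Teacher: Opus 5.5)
Your proposal is correct and follows the paper's proof: you transport the inequality along the faithful representation $\rho\colon A\hookrightarrow\I(X)$ of \cref{coro:repr_dist_invsgrp}, use that $\rho$ commutes with term evaluation, and use that an injective meet-preserving homomorphism reflects the natural order. Your explicit handling of the partial operation $\lor$ (compatibility is preserved because it is equationally defined) and your meet-based justification of order reflection fill in details that the paper leaves implicit.
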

	\begin{proof}
		By \cref{coro:repr_dist_invsgrp}, any distributive inverse semigroup $A$ admits faithful representation by partial bijections $\rho \colon A \hookrightarrow \I(X)$, i.e.\ $\rho$ is an injective map that preserves the operations $\{\,\cdot\,,0,1,\land,\lor,{\, }^{-1}\}$.  As a result, $\rho(\phi(a_1, \dots , a_n)) = \phi(\rho(a_1), \dots , \rho(a_n))$, for all $a_1, \dots , a_n \in A$, and similarly for $\psi$.  Therefore, by assumption, we have that
		\begin{align*}
			\rho(\phi(a_1, \dots , a_n)) & = \phi(\rho(a_1), \dots , \rho(a_n)) \\
			 & \leqslant  \psi(\rho(a_1), \dots , \rho(a_n)) = \rho(\psi(a_1, \dots , a_n)).
		\end{align*}
		Since $\rho $ is injective, and hence an order embedding, we deduce that $\phi(a_1, \dots , a_n) \leqslant \psi(a_1, \dots , a_n)$ as desired.
	\end{proof}
	\begin{rem}
		The distributive inverse semigroups described in \cref{df:dist_inv_smgrp} can be axiomatised by an \emph{essentially algebraic theory} (see \cite[\S 3.D]{adamek_rosicky}; essentially algebraic theories are equivalent to the \emph{`cartesian'} theories of \cite[Definition D1.3.4]{elephant}).  \cref{coro:part_bij_are_fin_complete} therefore expresses that the essentially algebraic theory of distributive inverse semigroups is determined by pseudogroups of the form $\I(X)$, just as the variety of distributive lattices is generated by powerset lattices (\cite[Theorem 10.21]{davey_priestley}).
	\end{rem}
	As an example application of \cref{coro:part_bij_are_fin_complete}, we deduce certain inequalities in a pseudogroup that we will use in the next section.  Although the inequalities below have (choice-free) algebraic proofs, it is nonetheless simpler, and more intuitive, to deduce their validity by looking at partial bijections on a set.
	\begin{coro}\label{coro:ineq_for_idempotents}
		For any pseudogroup $A$ and $a, b \in A$, we have the following inequalities
		\begin{align*}
			(a \land 1 ) \land ( b \land 1) & \leqslant \src(a \land b), & (a \land 1) \land \src(a \land b) & \leqslant (b \land 1), \\
			(a \land 1) \land (b \land 1) & \leqslant \tgt(a \land b), & (a \land 1) \land \tgt(a \land b) & \leqslant (b \land 1).
		\end{align*}
		in $(E(A),\leqslant)$.
	\end{coro}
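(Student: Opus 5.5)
The plan is to apply \cref{coro:part_bij_are_fin_complete}. Each of the four inequalities is built from $\cdot$, ${}^{-1}$, $1$ and $\land$ only: $\src(a\land b) = (a\land b)^{-1}(a\land b)$, $\tgt(a \land b) = (a\land b)(a\land b)^{-1}$, and the expressions $a \land 1$, $b \land 1$ are meets with the unit. Every pseudogroup is a distributive inverse semigroup, so it is enough to check the four inequalities in $\I(X)$ for an arbitrary set $X$ and arbitrary partial bijections $f, g$ of $X$. Since all terms involved are idempotent on both sides, the resulting inequalities in $A$ are inequalities in $(E(A),\leqslant)$, as the statement requires.

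In $\I(X)$ I will use the dictionary of \cref{ex:part_bijs}, identifying idempotents with subsets of $X$:
\begin{itemize}
\item $f \cap \id_X$ is the identity restricted to the fixed-point set $\mathrm{Fix}(f) = \sett{x}{f(x) = x}$;
\item $f \cap g$ is the restriction of $f$ to the agreement set $D = \sett{x \in \mathrm{dom} f \cap \mathrm{dom} g}{f(x) = g(x)}$;
\item $\src(f\cap g)$ is $D$, and $\tgt(f \cap g)$ is $f(D)$.
\end{itemize}

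Now check each inequality.
\begin{enumerate}
\item If $x \in \mathrm{Fix}(f)\cap\mathrm{Fix}(g)$, then $f(x) = x = g(x)$, so $x \in D$. This gives the first inequality.
\item For the same $x$, we have $x = f(x) \in f(D)$, which gives the third inequality.
\item If $x \in \mathrm{Fix}(f) \cap D$, then $g(x) = f(x) = x$, so $x \in \mathrm{Fix}(g)$. This gives the second inequality.
\item If $x \in \mathrm{Fix}(f)\cap f(D)$, write $x = f(y)$ with $y \in D$. Then $f(x) = x = f(y)$, and injectivity of $f$ gives $x = y \in D$. Hence $g(x) = f(x) = x$, which gives the fourth inequality.
\end{enumerate}

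There is no real obstacle here. The fourth inequality is the only one needing a small argument, namely the injectivity step above. The one point to be careful about is that \cref{coro:part_bij_are_fin_complete} relies on \cref{coro:repr_dist_invsgrp}, hence on the ultrafilter lemma. The paper already notes that choice-free algebraic proofs also exist, so this dependence is acceptable.
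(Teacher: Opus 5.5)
Your proposal is correct and follows the paper's proof: reduce via \cref{coro:part_bij_are_fin_complete} to partial bijections on a set, then verify the corresponding inclusions of fixed-point and agreement sets. The only difference is in the two $\tgt$ inequalities. The paper obtains them from the $\src$ ones by substituting $a^{-1}, b^{-1}$ for $a, b$ (using $a \land 1 = a^{-1} \land 1$), whereas you check them directly in $\I(X)$ with your injectivity argument, which works equally well.
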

	\begin{proof}
		By \cref{coro:part_bij_are_fin_complete}, it suffices to check these inequalities hold for all partial bijections $a, b \in \I(X)$ for a set $X$.  Since the terms involved are all idempotents, which in this case means a restriction of the identity on $X$, we can identify a term with its source as a subset of $X$, and so to show an inequality it suffices to show an inclusion between subsets.  The top-left inequality becomes the obvious inclusion:
		\begin{align*}
			 (a \cap \id_X) \cap (b \cap \id_X) = & \   \sett{x \in X}{x \in \src(a) , \, a(x) = x} \cap \sett{x \in X}{x \in \src(b), \, b(x) = x}, \\
			= & \ \sett{x \in X }{x \in \src(a), \, x \in \src(b), \, a(x) = x = b(x)}, \\
			\subseteq & \  \sett{x \in X}{x \in \src(a), \, x \in \src(b), \, a(x) = b(x)} = \src(a \cap b),
		\end{align*}
		while the top-right inequality is equally obvious:
		\begin{align*}
			& \ (a \cap \id_X) \cap \src(a \cap b) \\
			=& \ \sett{x \in X}{x \in \src(a), \, a(x) = x} \cap \sett{x \in X}{x \in \src(a), \, x \in \src(b), \, a(x) = b(x)}, \\
			= & \ \sett{x \in X}{x \in \src(a), \, x \in \src(b) , \, a(x) = b(x) = x}, \\
			\subseteq & \ \sett{x \in X}{x \in \src(b), \, b(x) = x} = ( b \cap \id_X).
		\end{align*}
		The bottom two inequalities follow from the top two inequalities by substituting $a^{-1}$ for $a$ and $b^{-1}$ for $b$, using the fact that, as an idempotent, $(a \land 1) = (a \land 1)^{-1} = (a^{-1} \land 1)$, and similarly $(b \land 1) = (b^{-1} \land 1)$. 
	\end{proof}
	\section{Faithful representation of pseudogroups}\label{sec:localic_rep}
	When the frame of idempotents $E(A)$ of a pseudogroup $A$ is not spatial, we cannot perform the same argument from \cref{subsec:subquotient}.  But, if we embrace point-free topology, as embodied by locale theory, there is no real loss: we can still faithfully represent any pseudogroup using partial homeomorphisms of a locale.  The idea is that we perform the same construction as in \cref{df:underlying_set}, except without specific reference to the points of $E(A)$, although the intuition will remain the same.

	We will explicitly construct a \emph{presentation} of the locale $Y$ in question, and describe partial homeomorphisms of $Y$ in terms of this presentation.  To facilitate our exposition, we therefore open by recalling both aspects of frame and locale theory that we will use in \cref{subsec:locale_of_subqts} -- the logical perspective on frames, as Lindenbaum-Tarski algebras for an infinitary propositional logic, and the topological perspective, where frames are treated earnestly as a point-free alternative to topological spaces.
	\subsection{Frame presentations}\label{subsec:frame_pres}
	As (infinitary) algebraic structures, frames can be presented in terms of \emph{generators} and \emph{relations}.  By the generators $G$, we mean any set of variable names -- these play the role of a prebasis from topology.  A frame presented by the set of generators $G$, with no relations, is the uniquely determined frame $[G]$ with the universal property that frame homomorphisms $v^\ast \colon [G] \to A$ correspond to functions $\underline{v} \colon G \to A$; for example, the free frame on one generator is concretely the frame of opens on the Sierpi\'nski space. (See \cite{johnstone_inv} for a comparison on various constructions of $[G]$.)
	
	A set of relations $R$ consists of equations $\phi = \psi$ between terms $\phi, \psi$ which have been constructed using generators from $G$ and the frame operations $\{0,1,\land,\bigvee\}$, and so both $\phi$ and $\psi$ can be interpreted as elements in the free frame $[G]$.  Frame homomorphisms from the presented frame $[G \mid R]$ to another frame $A$ correspond to functions $\underline{v} \colon G \to A$ such that the extended map $v^\ast \colon [G] \to A$ sends $\phi$ and $\psi$ to the same element in $A$.  Thus, $[G \mid R]$ is concretely the quotient frame of $[G]$ by the relations $R$.  Note that an inequality between terms can equivalently be expressed by the equality $\phi \land \psi = \phi$, and so for convenience we will often treat inequalites between terms as relations as well.
	
	A frame presentation can be understood as a theory of \emph{propositional geometric logic}, a logic which allows for finite conjunctions and infinitary disjunctions.  The set of generators $G$ is the set of propositional variables, i.e.\ the language of the theory, and the relations are the axioms (see \cite[\S D1]{elephant} for the complete syntax of propositional geometric logic).  The presented frame $[G \mid R]$ is then the Lindenbaum-Tarski algebra for the propositional theory encoded by $(G,R)$.  For this reason, we will write the inequalities in our set of relations using the logical entailment symbol `$\vdash$'.  From this perspective, a frame homomorphism $[G_1\mid R_1] \to [G_2 \mid R_2]$ represents an \emph{interpretation} of theories, where the generators in $G_1$ are assigned an interpretation as geometric formulae over the new language $G_2$, as represented by a function $\underline{v} \colon G_1 \to [G_2]$.  Thus, to show that such an interpretation $\underline{v} \colon G_1 \to [G_2]$ yields a well-defined frame homomorphism $[G_1 \mid R_1] \to [G_2 \mid R_2]$, we must show that, for each relation $\phi \vdash \psi$ contained in $R_1$, the image $v^\ast(\phi) \vdash v^\ast(\psi)$ is derivable from the theory $R_2$ (using the rules of geometric logic, see \cite[Definition D1.3.1]{elephant}).  This is how we will present the constructions in \cref{subsec:locale_of_subqts} -- as formal derivations in the sequent calculus of propositional geometric logic (though with many of the trivial deductions truncated).
	\begin{ex}
		Let $X$ be a set.  The frame $\P(X)$ can be presented by adding a generator $[x]$ for each element $x \in X$ and the relations $[x] \land [y] \vdash \bot$, for all pairs $x \neq y$, and $\top \vdash \bigvee_X [x]$.  Here, the generator $[x]$ represents the singleton open $\{x\} \in \P(X)$.
	\end{ex}
	\begin{ex}\label{ex:locale_self_prstn}
		Let $A$ be any frame.  Then $A$ can be trivially presented in terms of itself as follows.  For each element $a \in A$, we introduce a generator $\nbhd{a}$, subject to the following relations:
		\begin{enumerate}
			\item whenever $a \leqslant b$ in $A$, we add the relation $\nbhd{a} \vdash \nbhd{b}$,
			\item we add the relations $\nbhd{a} \land \nbhd{b} \vdash \nbhd{(a \land b)}$, for all $a, b \in A$, and $\top \vdash \nbhd{1}$, 
			\item and finally the relations $\nbhd{\left(\bigvee_I a_i\right)} \vdash \bigvee_I \nbhd{a}_i$, for all subsets $\sett{a_i}{i \in I} \subseteq A$, and $\nbhd{0} \vdash \bot$,
		\end{enumerate}
		These relations ensure that, for instance, the meet of two generators is equivalent to their meet in $A$, and similarly for the other frame operations.  Thus, the presented frame is isomorphic to $A$.
	\end{ex}
	\subsection{Locales}\label{subsec:locale_prelims}
	Recall that each a continuous map $f \colon X \to Y$ between spaces yields a frame homomorphism $f^\ast \colon \opens(Y) \to \opens(X)$.  For this reason, it is natural to consider the formal opposite of the category of frames and frame homomorphisms which, to emphasise the change in perspective, we call the category of \emph{locales} and \emph{locale morphisms}.  Within the category of locales, many of our intuitions from topology hold, with the additional benefit that the axiom of choice can generally be avoided (see, for example, the localic version of Tychonoff's theorem \cite{johnstone_tychonoff}).  See the later chapters of \cite{picado_pultr} or \cite{johnstone_stone} for this philosophy in action.  Because of this close comparison with topology, we call an isomorphism in the category of locales (equivalently, frames) a \emph{homeomorphism}.
	
	Moreover, many concepts from topology are readily generalised to locales.  For instance, the regular monomorphisms in the category of locales are called \emph{sublocales}, and are analogous to subspaces; concretely, a sublocale inclusion $B \subseteq A$ corresponds to a surjective frame homomorphism $A \twoheadrightarrow B$.  Each element of a locale $a \in A$ induces an \emph{open sublocale} $\langle a \rangle \subseteq A$, generalising the notion of an open subspace, and there is an isomorphism between $A$ and the lattice of open sublocales $B \subseteq A$, where the latter are ordered by sublocale inclusion (\cite[\S II.6]{picado_pultr}).  If we are given a presentation $[G \mid R]$ for the locale $A$, so that the element $a \in A$ corresponds to a term $\phi \in [G]$, then a presentation for the open sublocale $\langle a \rangle \subseteq A$ is easily obtained by simply adding the relation $\top \vdash \phi$ to the presentation of $A$.
	\subsection{The locale of subquotients}\label{subsec:locale_of_subqts}
	We now explicitly describe a presentation for a locale $Y$ and a pseudogroup embedding $\rho \colon A \hookrightarrow \H(Y)$, thus completing the proof of \cref{mainthm:no_pts}.  By construction, those points of $Y$ that do exist correspond to the pairs $(x,\eqr{a})$ found in \cref{df:underlying_set}.
	\begin{df}\label{df:presentation_of_Y}
		Let $Y$ denote the locale with the following presentation.
		\begin{enumerate}
			\item\label{enum:relations_from_EA} First, we include a copy of the trivial presentation of $E(A)$, i.e.\ we add a generator $\nbhd{e}$ for each idempotent $e \in E(A)$, and the relations from \cref{ex:locale_self_prstn}, e.g.\ $\nbhd{e} \land \nbhd{f} \vdash \nbhd{(e \land f)}$, etc.
			\item\label{enum:relations_for_eqr} Next, we add for each element $a \in A$ a generator $\eqr{a}$, and the relations $\eqr{a} \land \eqr{a}' \vdash \nbhd{\tgt(a\land a')}$ and $\eqr{a} \land \nbhd{\tgt(a \land a')} \vdash \eqr{a}'$, for all pairs $a, a' \in  A$.
			\item\label{enum:relations_big_join} Finally, we add the relation $\nbhd{e} \vdash \bigvee_{\tgt(a) \leqslant e} \eqr{a}$, for each $e \in E(A)$.
		\end{enumerate}
	\end{df}
	Note that, since $Y$ contains a copy of the trivial presentation of $E(A)$, the identity assignment $\nbhd{e} \mapsto \nbhd{e}$ automatically yields a frame homomorphism which we denote by $q^\ast \colon E(A) \to Y$ (so that the corresponding locale morphism is denoted by $q \colon Y \to E(A)$).  We will later show in \cref{prop:q_has_section} that $q^\ast$ is injective.
	\begin{df}
		For an element $b \in A$, we define a homeomorphism
		\[
		\rho_b \colon \lrangle{\nbhd{\src(b)}} \to \lrangle{\nbhd{\tgt(b)}}
		\]
		between the open sublocales $\lrangle{\nbhd{\src(b)}}, \lrangle{\nbhd{\tgt(b)}} \subseteq Y$ by setting the action of the inverse image $\rho_b^\ast$ on generators of $\lrangle{\nbhd{\tgt(b)}}$ as:
		\begin{align*}
			\nbhd{e} & \mapsto \nbhd{(b^{-1}eb)}, \\
			\eqr{a} & \mapsto \eqr{b^{-1}a},
		\end{align*}
		for all $e \in E(A)$ and $a \in A$.
	\end{df}
	\begin{prop}
		For each $b \in A$, the map $\rho_b$ is a well-defined homeomorphism of locales.
	\end{prop}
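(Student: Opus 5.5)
To prove that $\rho_b$ is a well-defined homeomorphism, I would work entirely with presentations, as set out in \cref{subsec:frame_pres}. By \cref{subsec:locale_prelims}, the open sublocale $\lrangle{\nbhd{\tgt(b)}}$ is presented by the relations of \cref{df:presentation_of_Y} together with $\top \vdash \nbhd{\tgt(b)}$, and similarly for $\lrangle{\nbhd{\src(b)}}$. So two things are needed. First, the assignment on generators must send every relation of $\lrangle{\nbhd{\tgt(b)}}$ to a sequent derivable in the theory of $\lrangle{\nbhd{\src(b)}}$; this gives a frame homomorphism $\rho_b^\ast$. Second, the composites $\rho_{b^{-1}}^\ast\rho_b^\ast$ and $\rho_b^\ast\rho_{b^{-1}}^\ast$ must be the identity on generators, which makes $\rho_b$ an isomorphism with inverse $\rho_{b^{-1}}$. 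By the symmetry $b \leftrightarrow b^{-1}$, it suffices to handle one of each.

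\emph{Relations of type \cref{enum:relations_from_EA}.} These reduce to the map $e \mapsto b^{-1}eb$ on $E(A)$, which I check is monotone and preserves binary meets, joins and $0$. Meets are preserved because $b^{-1}eb\,b^{-1}fb = b^{-1}efbb^{-1}b = b^{-1}efb$, using that idempotents commute. Joins and $0$ are preserved by distributivity of multiplication over joins. The relation $\top \vdash \nbhd{1}$ becomes $\top \vdash \nbhd{\src(b)}$, which holds in $\lrangle{\nbhd{\src(b)}}$. The extra relation $\top \vdash \nbhd{\tgt(b)}$ maps to $\top \vdash \nbhd{(b^{-1}bb^{-1}b)} = \nbhd{\src(b)}$, which also holds.

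\emph{Relations of type \cref{enum:relations_for_eqr}.} By \cref{prop:basic_props}\cref{enum:inv_of_product}\cref{enum:mult_dist_over_meet} we have $\tgt(b^{-1}a \land b^{-1}a') = \tgt(b^{-1}(a\land a')) = b^{-1}\tgt(a\land a')b$. Hence the image of each such relation is literally an instance of the same relation scheme for the pair $b^{-1}a, b^{-1}a'$.

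\emph{Relation \cref{enum:relations_big_join}.} I expect this to be the main obstacle. The image of $\nbhd{e} \vdash \bigvee_{\tgt(a)\leqslant e}\eqr{a}$ is
\[
\nbhd{(b^{-1}eb)} \vdash \textstyle\bigvee_{\tgt(a) \leqslant e} \eqr{b^{-1}a}.
\]
I would first apply the relation \cref{enum:relations_big_join} for the idempotent $b^{-1}eb$ to get $\nbhd{(b^{-1}eb)} \vdash \bigvee_{\tgt(c)\leqslant b^{-1}eb}\eqr{c}$. It then suffices to show that $\eqr{c} \vdash \eqr{b^{-1}bc}$ in the presence of $\nbhd{\src(b)}$, and that $a = bc$ is an admissible index. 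Admissibility holds because $\tgt(bc) = b\,\tgt(c)\,b^{-1} \leqslant bb^{-1}ebb^{-1} \leqslant e$.

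For the entailment, I use the second relation of \cref{enum:relations_for_eqr} with the pair $c,\ b^{-1}bc$. Since $c \land b^{-1}bc = b^{-1}bc$, we have $\tgt(c \land b^{-1}bc) = \src(b)\tgt(c)$. From $\eqr{c} \vdash \nbhd{\tgt(c)}$ (the first relation with $a = a' = c$), together with $\top \vdash \nbhd{\src(b)}$ and the meet relations, we obtain $\eqr{c} \vdash \nbhd{(\src(b)\tgt(c))}$, and hence $\eqr{c} \vdash \eqr{b^{-1}bc}$.

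\emph{Inverse.} For idempotents, $\rho_{b^{-1}}^\ast\rho_b^\ast(\nbhd{e}) = \nbhd{(\tgt(b)\,e)}$, which is equivalent to $\nbhd{e}$ in $\lrangle{\nbhd{\tgt(b)}}$. For the other generators, $\rho_{b^{-1}}^\ast\rho_b^\ast(\eqr{a}) = \eqr{bb^{-1}a}$. The computation in the previous step, with $b^{-1}$ in place of $b$, gives $\eqr{a} \vdash \eqr{bb^{-1}a}$. Conversely, from $\eqr{bb^{-1}a} \vdash \nbhd{\tgt(bb^{-1}a)}$ and $bb^{-1}a \land a = bb^{-1}a$, the second relation of \cref{enum:relations_for_eqr} yields $\eqr{bb^{-1}a} \vdash \eqr{a}$. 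So $\rho_{b^{-1}}^\ast\rho_b^\ast$ fixes all generators, and by symmetry so does $\rho_b^\ast\rho_{b^{-1}}^\ast$.
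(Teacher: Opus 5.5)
Your proposal is correct and follows essentially the same route as the paper. Both arguments check each relation of \cref{df:presentation_of_Y}, together with $\top \vdash \nbhd{\tgt(b)}$, under $\rho_b^\ast$; both handle the big-join relation via the entailment $\eqr{c} \vdash \eqr{b^{-1}bc}$ in $\lrangle{\nbhd{\src(b)}}$ together with $\tgt(bc) \leqslant e$; and both then verify that $\rho_{b^{-1}}^\ast\rho_b^\ast$ fixes the generators. Your version is slightly more explicit (for instance, in deriving $\eqr{c} \vdash \nbhd{\tgt(c)}$), and you correctly compute $\rho_{b^{-1}}^\ast\rho_b^\ast(\eqr{a}) = \eqr{bb^{-1}a}$, where the paper's first display writes $\eqr{b^{-1}ba}$.
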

	\begin{proof}
		First, we show that $\rho_b^\ast \colon \lrangle{\nbhd{\tgt(b)}} \to \lrangle{\nbhd{\src(b)}}$ is a well-defined frame homomorphism, i.e.\ the action of $\rho_b^\ast$ on generators respects relations.  We check each relation from \cref{df:presentation_of_Y} in turn.
		\begin{enumerate}
			\item It is straightforward to show that $\rho_b^\ast$ preserves those relations coming from the trivial presentation on $E(A)$, i.e.\ the relations in \cref{df:presentation_of_Y}\cref{enum:relations_from_EA}. For instance, for the relation $\nbhd{e} \vdash \nbhd{f}$ of $\lrangle{\nbhd{\tgt(b)}}$, where $e \leqslant f$, we have that $b^{-1}eb \leqslant b^{-1}fb$, and so the image of the relation, $\rho_b^\ast(\nbhd{e}) = \nbhd{(b^{-1}eb)} \vdash \nbhd{(b^{-1}fb)} = \rho_b^\ast(\nbhd{f})$, is satisfied in $\lrangle{\nbhd{\src(b)}}$.
			\item The relation $\eqr{a} \land \eqr{a}' \vdash \nbhd{\tgt(a \land a')}$ is automatically preserved, since we have that
			\begin{align*}
				\rho_b^\ast(\eqr{a}) \land \rho_b^\ast(\eqr{a'}) = \eqr{b^{-1}a} \land \eqr{b^{-1}a'} & \vdash \nbhd{\tgt(b^{-1}a \land b^{-1}a')}, \\
				& = \nbhd{((b^{-1}a \land b^{-1}a')(b^{-1}a \land b^{-1}a')^{-1})}, \\
				& = \nbhd{(b^{-1}(a \land a')(a \land a')^{-1}b)} = \rho_b^\ast(\nbhd{\tgt(a \land a')})
			\end{align*}
			in $\lrangle{\nbhd{\src(b)}}$.  Similarly, the relation $\eqr{a} \land \nbhd{\tgt(a \land a')} \vdash \eqr{a}'$ is also preserved by $\rho_b^\ast$ since, by the above calculation, we have that
			\[
			\rho_b^\ast(\eqr{a}) \land \rho_b^\ast(\nbhd{\tgt(a \land a')}) = \eqr{b^{-1}a} \land \nbhd{\tgt(b^{-1}a \land b^{-1}a')} \vdash \eqr{b^{-1}a'} = \rho_b^\ast(\eqr{a}').
			\]
			\item Next we consider the relation $\nbhd{e} \vdash \bigvee_{\tgt(a) \leqslant e} \eqr{a}$.  First observe that, since $\lrangle{\nbhd{\src(b)}}$ satisfies the relation $\top \vdash \nbhd{\src(b)}$, we have that
			\begin{align*}
				\eqr{a}' & \vdash \eqr{a}' \land \nbhd{\src(b)} \land \nbhd{\tgt(a')} \land \nbhd{\src(b)}, \\
				& \vdash \eqr{a}' \land \nbhd{(\src(b) \land \tgt(a') \land \src(b))}, \\
				& = \eqr{a}' \land \nbhd{(b^{1}ba'{a'}^{-1}b^{-1}b)}, \\
				& = \eqr{a}' \land \nbhd{\tgt(b^{-1}ba')}, \\
				& = \eqr{a}' \land \nbhd{\tgt(b^{-1}ba' \land a')}, \\
				& \vdash \eqr{b^{-1}ba'},
			\end{align*}
			is valid in $\lrangle{\nbhd{\src(b)}}$, for all $a' \in A$, where we have used that $b^{-1}ba' \leqslant a'$.  Note also that if $\tgt(a') = a'{a'}^{-1} \leqslant b^{-1}eb$, then $\tgt(ba') = ba'{a'}^{-1}b^{-1} \leqslant bb^{-1}ebb^{-1} \leqslant e$.  Thus, we have the relation
			\begin{align*}
			\rho_b^\ast(\nbhd{e}) = \nbhd{(b^{-1}eb)} & \textstyle \vdash \bigvee_{\tgt(a') \leqslant b^{-1}eb} \eqr{a}', \\
			 & \textstyle \vdash \bigvee_{\tgt(a') \leqslant b^{-1}eb} \eqr{b^{-1}ba'} , \\
			  & \textstyle = \bigvee_{\tgt(a') \leqslant b^{-1}eb} \rho_b^\ast(\eqr{ba'}) , \\
			  & \textstyle \vdash \bigvee_{\tgt(a) \leqslant e} \rho_b^\ast(\eqr{a}) 
			\end{align*}
			is satisfied in $\lrangle{\nbhd{\src(b)}}$, as desired.
			\item Finally, the relation $\top \vdash \nbhd{\tgt(b)}$ of $\lrangle{\nbhd{\tgt(b)}}$ is preserved since the relation $\top \vdash \nbhd{\src(b)} = \nbhd{(b^{-1}b)} = \nbhd{(b^{-1}\tgt(b)b)} = \rho_b^\ast(\nbhd{\tgt(b)})$ is satisfied in $\lrangle{\nbhd{\src(b)}}$.
		\end{enumerate}
		
		Next, we claim that $\rho_{b^{-1}}$ is the inverse to $\rho_b$.  By symmetry, it suffices to show that $\rho_{b^{-1}}^\ast \rho_b^\ast$ sends each generator of $\lrangle{\nbhd{\tgt(b)}}$ to an equivalent element.  For the generator $\nbhd{e}$, we have one relation $\rho_{b^{-1}}^\ast \rho_b^\ast(\nbhd{e}) = \nbhd{(bb^{-1}ebb^{-1})} \vdash \nbhd{e}$ immediately from the fact that $bb^{-1}ebb^{-1} \leqslant e$; the converse relation holds since, modulo the relations of $\lrangle{\nbhd{\tgt(b)}}$, we have that 
		\[\nbhd{e} \vdash \nbhd{e} \land \nbhd{\tgt(b)} \vdash \nbhd{(e \land \tgt(b))} = \nbhd{(ebb^{-1})} = \nbhd{(bb^{-1}ebb^{-1})}.\]
		For the generator $\eqr{a}$, using that $b^{-1}ba \land a = b^{-1}ba$, we have the derivation
		\begin{align*}
			\rho_{b^{-1}}^\ast \rho_b^\ast(\eqr{a}) = \eqr{b^{-1}ba} & \vdash \eqr{b^{-1}ba} \land \nbhd{\tgt(b^{-1}ba)}, \\
			& = \eqr{b^{-1}ba} \land \nbhd{\tgt(b^{-1}ba \land a)}, \\
			& \vdash \eqr{a},
		\end{align*}
		while the converse derivation is given by
		\begin{align*}
			\eqr{a} & \vdash \eqr{a} \land \nbhd{\tgt(a)} \land \nbhd{\tgt(b)}, \\
			& \vdash \eqr{a} \land \nbhd{(\tgt(a) \land \tgt(b))}, \\
			& = \eqr{a} \land \nbhd{(bb^{-1}aa^{-1}bb^{-1})}, \\
			& = \eqr{a} \land \nbhd{\tgt(bb^{-1}a)}, \\
			& = \eqr{a} \land \nbhd{\tgt(a \land bb^{-1}a)}, \\
			& \vdash \eqr{bb^{-1}a} = \rho_{{b}^{-1}}^\ast \rho_b^\ast(\eqr{a}),
		\end{align*}
		thus completing the proof.
	\end{proof}
	\begin{thm}\label{thm:localic_rho}
		For any pseudogroup $A$, the assignment $\rho \colon A \to \Homeo(Y)$ is an injective pseudogroup homomorphism.
	\end{thm}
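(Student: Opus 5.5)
I will mirror the structure of the proof of \cref{thm:rho_is_psdgrp_embedding}, replacing point-wise arguments by derivations in the presentation of \cref{df:presentation_of_Y}. Recall that $\H(Y)$ consists of the homeomorphisms between open sublocales of $Y$. I identify such a partial homeomorphism $\alpha$ with the pair consisting of its domain $\lrangle{u}$, for $u \in \opens(Y)$, and its inverse-image frame map. In $\H(Y)$, composition is composition of partial homeomorphisms, restricted to the appropriate open sublocale. The meet of $\alpha,\beta$ is the restriction of both to the largest open $w$ on which $\alpha^\ast$ and $\beta^\ast$ agree, i.e.\ the join of all opens $w \leqslant u \land v$ such that $\alpha^\ast(g)\land w = \beta^\ast(g) \land w$ for every generator $g$. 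Joins are obtained by gluing.

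\textbf{Semigroup homomorphism and unit.} For $b,b'\in A$, the domain of $\rho_b\rho_{b'}$ is the open $\rho_{b'}^\ast(\nbhd{\src(b)}) = \nbhd{(b'^{-1}bb^{-1}\cdots)}$, which after simplification is $\nbhd{\src(bb')}$. This is the same idempotent calculation as in \cref{prop:tau_is_homomorphism}. On generators, $\rho_{b'}^\ast\rho_b^\ast(\nbhd{e}) = \nbhd{(b'^{-1}b^{-1}ebb')}$ and $\rho_{b'}^\ast\rho_b^\ast(\eqr{a}) = \eqr{b'^{-1}b^{-1}a}$. These coincide with $\rho_{bb'}^\ast$, using $(bb')^{-1} = b'^{-1}b^{-1}$ from \cref{prop:basic_props}. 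For the unit, $\rho_1^\ast$ is the identity on generators, and $\nbhd{1} = \top$. Monotonicity then follows as in \cref{coro:monotone}.

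\textbf{Joins and meets.} For joins, let $\{b_i\}$ be compatible. Since $\src(\bigvee b_i)=\bigvee\src(b_i)$ and the relation $\nbhd{(\bigvee e_i)}\vdash\bigvee\nbhd{e_i}$ holds, the domain of $\rho_{\bigvee b_i}$ is the join of the domains of the $\rho_{b_i}$. Monotonicity gives that each $\rho_{b_i}$ is a restriction of $\rho_{\bigvee b_i}$, and gluing is unique, so $\rho$ preserves the join.

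For binary meets, I must show that the largest open on which $\rho_b^\ast$ and $\rho_{b'}^\ast$ agree lies below $\nbhd{\src(b\land b')}$; the reverse inequality follows from monotonicity. Let $w$ be such an open. Then $w \land \eqr{b^{-1}a} = w\land\eqr{b'^{-1}a}$ for all $a$. Using relation \cref{enum:relations_big_join} with $e=1$, I cover $w$ by the $\eqr{a}$'s, which by the translation argument can be taken of the form $\eqr{b^{-1}a}$. Then relation \cref{enum:relations_for_eqr} gives
\[ w\land\eqr{b^{-1}a} \vdash \nbhd{\tgt(b^{-1}a\land b'^{-1}a)} = \nbhd{\tgt((b\land b')^{-1}a)}, \]
and this lies below $\nbhd{\src(b\land b')}$, exactly as in the computation of \cref{lem:rho_preserves_meets}. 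I expect some care here: the comparison of $\eqr{\cdot}$ generators holds only locally, and I may need to apply $\rho_b^\ast$ instead to the generators $\eqr{a}$ with $\tgt(a)\leqslant\tgt(b)$. I also need to check that \cref{coro:ineq_for_idempotents} supplies the required idempotent inequalities, e.g.\ $(a\land1)\land(b\land 1)\leqslant \src(a\land b)$.

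\textbf{Injectivity.} This is the main obstacle. By \cref{lem:strict_ineq_implies_inject}, it suffices to show that $\rho$ preserves the strict order. If $a<b$, then $\src(a)<\src(b)$ in $E(A)$. The domains of $\rho_a$ and $\rho_b$ are the opens $q^\ast(\src(a))$ and $q^\ast(\src(b))$, so everything reduces to the injectivity of $q^\ast\colon E(A)\to Y$, i.e.\ \cref{prop:q_has_section}. I would prove it by constructing a frame homomorphism $s^\ast\colon Y\to E(A)$ with $s^\ast q^\ast=\id$, defined on generators by
\[ \nbhd{e}\mapsto e, \qquad \eqr{a}\mapsto a\land 1 . \]
This is the localic analogue of the section $x\mapsto (x,\eqr{1})$.

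The relations of \cref{enum:relations_from_EA} are clearly preserved. The relations of \cref{enum:relations_for_eqr} become exactly the inequalities $(a\land1)\land(a'\land1)\leqslant\tgt(a\land a')$ and $(a\land1)\land\tgt(a\land a')\leqslant a'\land1$ of \cref{coro:ineq_for_idempotents}; this is evidently why that corollary was stated. The hardest check is \cref{enum:relations_big_join}, namely $e\leqslant\bigvee_{\tgt(a)\leqslant e}(a\land1)$. It holds because $a=e$ is itself one of the indices, and $e\land1=e$. Hence $q^\ast$ is injective, and $\rho_a\neq\rho_b$ follows.
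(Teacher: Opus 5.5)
Your proposal is correct and follows the paper's proof essentially step for step: multiplication and unit are checked on domains and generators, joins come from $\src(\bigvee_I b_i)=\bigvee_I \src(b_i)$, meets from a covering argument using relations (ii) and (iii) of \cref{df:presentation_of_Y}, and injectivity from the section $s^\ast$ with $\nbhd{e}\mapsto e$, $\eqr{a}\mapsto a\land 1$, together with \cref{coro:ineq_for_idempotents} and \cref{lem:strict_ineq_implies_inject} (one harmless slip: the domain of $\rho_b\rho_{b'}$ should read $\nbhd{({b'}^{-1}b^{-1}bb')}=\nbhd{\src(bb')}$). Your meet step is, if anything, more explicit than the paper's displayed chain, since you visibly use the agreement relations on $w$ to get $w\land\eqr{c}\leqslant\eqr{b^{-1}bc}\land\eqr{{b'}^{-1}bc}\leqslant\nbhd{\tgt((b\land b')^{-1}bc)}\leqslant\nbhd{\src(b\land b')}$; your closing hedges there are therefore unnecessary, and \cref{coro:ineq_for_idempotents} is only needed for the section, not for meets.
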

	Once again, we break down the proof into digestible chunks.
	\begin{prop}\label{prop:locale_semigroup_homo}
		The assignment $\rho \colon A \to \Homeo(Y)$ is a unit-preserving semigroup homomorphism.
	\end{prop}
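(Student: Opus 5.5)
We must show $\rho_{bb'} = \rho_b \rho_{b'}$ as partial homeomorphisms of $Y$, and that $\rho_1 = \id_Y$. The composite of partial homeomorphisms $\rho_{b'} \colon \lrangle{\nbhd{\src(b')}} \to \lrangle{\nbhd{\tgt(b')}}$ and $\rho_b \colon \lrangle{\nbhd{\src(b)}} \to \lrangle{\nbhd{\tgt(b)}}$ has domain the open sublocale $\rho_{b'}^{-1}\bigl(\lrangle{\nbhd{\tgt(b')}} \cap \lrangle{\nbhd{\src(b)}}\bigr)$. The proof therefore splits into two steps. First, the domains agree. Second, the inverse image frame homomorphisms agree on generators. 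This mirrors \cref{prop:tau_is_homomorphism} and \cref{prop:rho_is_semigrp_homo}, with points replaced by generators.

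\textbf{Step 1: the domains.} The intersection $\lrangle{\nbhd{\tgt(b')}} \cap \lrangle{\nbhd{\src(b)}}$ is the open sublocale $\lrangle{\nbhd{(\tgt(b')\src(b))}}$, using the relation $\nbhd{e} \land \nbhd{f} \vdash \nbhd{(ef)}$. Its preimage under $\rho_{b'}$ is the open sublocale of $\lrangle{\nbhd{\src(b')}}$ given by
\[
\rho_{b'}^\ast\bigl(\nbhd{(\tgt(b')\src(b))}\bigr) = \nbhd{(b'^{-1}b'b'^{-1}b^{-1}bb')} = \nbhd{(b'^{-1}b^{-1}bb')} = \nbhd{\src(bb')}.
\]
Since $\src(bb') \leqslant \src(b')$, the open sublocale $\lrangle{\nbhd{\src(bb')}}$ of $Y$ lies inside $\lrangle{\nbhd{\src(b')}}$. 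Hence this preimage is exactly $\lrangle{\nbhd{\src(bb')}}$, the domain of $\rho_{bb'}$. The same computation with $b^{-1}$ and $b'^{-1}$ shows the codomains also agree, with both equal to $\lrangle{\nbhd{\tgt(bb')}}$.

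\textbf{Step 2: the generators.} A frame homomorphism out of a presented frame is determined by its values on generators. It therefore suffices to check that $\rho_{b'}^\ast\rho_b^\ast$, suitably restricted, and $\rho_{bb'}^\ast$ agree on each $\nbhd{e}$ and each $\eqr{a}$, as elements of $\lrangle{\nbhd{\src(bb')}}$. On generators we have
\[
\rho_{b'}^\ast\rho_b^\ast(\nbhd{e}) = \nbhd{(b'^{-1}b^{-1}ebb')} = \rho_{bb'}^\ast(\nbhd{e}), \qquad \rho_{b'}^\ast\rho_b^\ast(\eqr{a}) = \eqr{b'^{-1}b^{-1}a} = \rho_{bb'}^\ast(\eqr{a}),
\]
using $(bb')^{-1} = b'^{-1}b^{-1}$ from \cref{prop:basic_props}\cref{enum:inv_of_product}. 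The restriction maps between open sublocales are the quotients that add the relation $\top \vdash \nbhd{\cdot}$, and these send generators to generators. So the generator-level identities already give equality.

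\textbf{Unit.} Here $\rho_1$ has domain $\lrangle{\nbhd{1}} = Y$, because $\top \vdash \nbhd{1}$ is among the relations. Its inverse image sends $\nbhd{e} \mapsto \nbhd{e}$ and $\eqr{a} \mapsto \eqr{a}$, so $\rho_1 = \id_Y$.

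The main obstacle is being precise about composition of partial homeomorphisms of locales. Specifically, the composite's inverse image is $\rho_{b'}^\ast$ applied after restricting $\rho_b^\ast$ to the open sublocale $\lrangle{\nbhd{(\tgt(b')\src(b))}}$. One must also check that the various restriction quotients commute with the generator assignments. I would handle this by working throughout with the presentations: an open sublocale $\lrangle{\nbhd{e}}$ is obtained by adjoining $\top \vdash \nbhd{e}$, and all the maps involved are induced by the same assignments on generators. Agreement on generators, together with Step 1, then yields equality of the partial homeomorphisms.
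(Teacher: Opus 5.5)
Your proposal is correct and follows essentially the same route as the paper. Both arguments compute the domain of the composite as the open sublocale given by the pulled-back element $\nbhd{\src(bb')}$ (the paper writes this as $\rho_{b'}\rho_b = \rho_{b'b}$, which is the same statement relabelled), identify it with the domain of $\rho$ of the product, conclude by agreement on the generators $\nbhd{e}$ and $\eqr{a}$, and handle the unit via the relation $\top \vdash \nbhd{1}$. Your write-up is slightly more explicit than the paper's, since you intersect with the codomain $\lrangle{\nbhd{\tgt(b')}}$ before pulling back and name the inverse image map consistently.
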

	\begin{proof}
		Clearly, $\rho_1^\ast$ acts as the identity on generators, and its domain is the entirety of the locale $Y$ by the relation $\top \vdash \nbhd{1} = \nbhd{\src(1)}$.  Thus, the unit is preserved.
		
		Recall that the domain of $\rho_b$, that is the sublocale $\lrangle{\nbhd{\src(b)}}\subseteq Y$, is presented by adding the relation $\top \vdash \nbhd{\src(b)}$ to the presentation given in \cref{df:presentation_of_Y}, and similarly for $\rho_{b'}$.  The domain of $\rho_{b'} \rho_{b}$ is the (open) sublocale of $\lrangle{\nbhd{\src(b)}}$ induced by the inverse image of $\lrangle{\nbhd{\src(b')}}$, the domain of $\rho_{b'}$, under the locale morphism $\rho_{b^{-1}}$, which can be concretely presented by adding the relation $\top \vdash \rho_{{b}^{-1}}^\ast(\nbhd{\src(b')})$ to the presentation of $\lrangle{\nbhd{\src(b')}}$.  Using that $\rho_{{b}^{-1}}^\ast(\nbhd{\src(b')}) = \nbhd{(b^{-1}{b'}^{-1}b'b)} = \nbhd{\src(b'b)}$, we have that the domain of $\rho_{b'}\rho_b$ is contained in the sublocale $\lrangle{\nbhd{\src(b'b)}}$.  The converse inclusion is ensured since $\nbhd{\src(b'b)} \vdash \nbhd{\src(b)}$.  Thus, the domains of $\rho_{b'}\rho_b$ and $\rho_{b'b}$ coincide, and it is clear that they act identically on generators, and so $\rho_{b'}\rho_b = \rho_{b'b}$ as desired.
	\end{proof}
	Just as in \cref{coro:monotone}, \cref{prop:locale_semigroup_homo} entails that $\rho$ is monotone with respect to the partial orders.
	\begin{lem}\label{lem:localic_rho_preserves_meets}
		The semigroup homomorphism $\rho \colon A \to \Homeo(Y)$ preserves binary meets.
	\end{lem}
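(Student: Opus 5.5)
We must show $\rho_b \land \rho_{b'} = \rho_{b \land b'}$ in $\Homeo(Y)$. Since $\rho$ is monotone (the remark after \cref{prop:locale_semigroup_homo}), we already have $\rho_{b \land b'} \leqslant \rho_b$ and $\rho_{b\land b'} \leqslant \rho_{b'}$. So the work is the reverse inequality. As in \cref{lem:rho_preserves_meets}, the plan is to show that any open sublocale on which $\rho_b$ and $\rho_{b'}$ agree is contained in $\lrangle{\nbhd{\src(b \land b')}}$.

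The setup is as follows. A partial homeomorphism $\gamma \leqslant \rho_b,\rho_{b'}$ is, up to restriction, a common restriction of $\rho_b$ and $\rho_{b'}$ to an open sublocale $U = \lrangle{u}$, where $u \leqslant \nbhd{\src(b)} \land \nbhd{\src(b')}$. (The meet in $\Homeo(Y)$ is the restriction to the join of all such $u$, so it suffices to treat a single such $u$.) On $U$ the frame maps $\rho_b^\ast$ and $\rho_{b'}^\ast$, composed with restriction to $U$, coincide. Composing with $\rho_{b^{-1}}^\ast$ and using \cref{prop:locale_semigroup_homo}, we get that $\rho_{b^{-1}b}^\ast$ and $\rho_{b^{-1}b'}^\ast$ agree after restriction to $U$. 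Evaluating on a generator $\eqr{a}$ therefore gives, modulo $\top \vdash u$,
\[
\eqr{b^{-1}ba} \dashv\vdash \eqr{b^{-1}b'a}.
\]
This is the localic analogue of $(x,\eqr{b^{-1}ba}) = (x,\eqr{b^{-1}b'a})$.

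The main computation then follows. Using the relation $\eqr{c}\land\eqr{c}' \vdash \nbhd{\tgt(c\land c')}$ of \cref{df:presentation_of_Y}\cref{enum:relations_for_eqr}, together with \cref{prop:basic_props}\cref{enum:mult_dist_over_meet}, we obtain in $U$
\[
\eqr{b^{-1}ba} \vdash \nbhd{\tgt(b^{-1}(b\land b')a)} \vdash \nbhd{\src(b\land b')}.
\]
The last step is exactly the idempotent calculation already done in \cref{lem:rho_preserves_meets}, via \cref{prop:basic_props}\cref{enum:a_inv_a_and_b}. Next, since $u \vdash \nbhd{\src(b)}$, the derivation from the well-definedness proof of $\rho_b$ gives $\eqr{a} \land u \vdash \eqr{b^{-1}ba}$. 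Finally, the covering relation \cref{df:presentation_of_Y}\cref{enum:relations_big_join} with $e = 1$ gives $\top \vdash \bigvee_{a} \eqr{a}$. Putting these together,
\[
u \vdash \bigvee_a (\eqr{a} \land u) \vdash \bigvee_a \eqr{b^{-1}ba} \vdash \nbhd{\src(b \land b')},
\]
so $U \subseteq \lrangle{\nbhd{\src(b\land b')}}$. On this open, $\rho_{b\land b'}$ is a restriction of $\rho_b$, and hence agrees with $\gamma$. Therefore $\gamma \leqslant \rho_{b\land b'}$.

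I expect the main obstacle to be the first step. There we must make precise how meets of partial homeomorphisms in $\Homeo(Y)$ are formed, and justify that "agreeing on $U$" can be tested on the generators $\eqr{a}$ after composing with $\rho_{b^{-1}}$. This requires care with the domains of the composites, which are open sublocales presented by added relations, in the style of \cref{prop:locale_semigroup_homo}. The remaining steps are routine derivations in the presentation.
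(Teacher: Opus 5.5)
Your proof is correct, and its core matches the paper's. Both cover by the generators $\eqr{a}$, use that $\rho_b$ and $\rho_{b'}$ agree, apply $\eqr{c}\land\eqr{c'}\vdash\nbhd{\tgt(c\land c')}$, and bound the resulting idempotent by $\src(b\land b')$.

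The packaging differs in two useful ways.
\begin{enumerate}
\item The paper identifies the domain of $\rho_b\cap\rho_{b'}$ with the coequalizer sublocale, presented by adding $\rho_b^\ast(g)\dashv\vdash\rho_{b'}^\ast(g)$ for all generators $g$, and derives $\top\vdash\nbhd{\src(b\land b')}$ there. You instead show that every open lower bound $\gamma$ satisfies $\gamma\leqslant\rho_{b\land b'}$. That is exactly the definition of the meet in $\Homeo(Y)$, and it avoids reasoning about an equalizer sublocale that is not a priori open.
\item Your derivation makes the use of agreement explicit. As displayed, the paper's chain never invokes its agreement relations. Its last step bounds $\tgt(a\land a')$ by $\src(b\land b')$ for all $a,a'$ with $\tgt(a)\leqslant\src(b)$ and $\tgt(a')\leqslant\src(b')$, and this does not follow from the relations of $Y$ alone. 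For instance, take $G+\{0\}$ with $b\neq b'$ in $G$ and $a=a'=1$. The step only goes through for pairs $a=b^{-1}c$, $a'={b'}^{-1}c$ linked by the agreement relation, which is precisely what your argument supplies.
\end{enumerate}

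There is one slip to fix. Since $\rho_c^\ast(\eqr{a})=\eqr{c^{-1}a}$, evaluating $\rho_{b^{-1}b}^\ast$ and $\rho_{b^{-1}b'}^\ast$ at $\eqr{a}$ gives $\eqr{b^{-1}ba}\dashv\vdash\eqr{{b'}^{-1}ba}$ modulo $\top\vdash u$. It does not give $\eqr{b^{-1}ba}\dashv\vdash\eqr{b^{-1}b'a}$. You transcribed the point-set formula, which tracks direct images of fibre coordinates rather than inverse images of opens.

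The relation you wrote is still true: compose with $\rho_{{b'}^{-1}}$ instead and use $u\leqslant\nbhd{\src(b)}\land\nbhd{\src(b')}$. The correct relation works just as well, since $\tgt(b^{-1}ba\land{b'}^{-1}ba)=\tgt((b^{-1}\land{b'}^{-1})ba)\leqslant\tgt(b^{-1}\land{b'}^{-1})=\src(b\land b')$.

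You can also bypass \cref{prop:locale_semigroup_homo} here. Because $\gamma$ is the restriction of both $\rho_b$ and $\rho_{b'}$ to $\lrangle{u}$, applying $\gamma^\ast$ to the generator $\eqr{ba}$ gives $u\land\eqr{b^{-1}ba}=u\land\eqr{{b'}^{-1}ba}$ directly. So the domain bookkeeping you flagged as the main obstacle does not really arise.
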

	\begin{proof}
		Let $b, b' \in A$.  We aim to show that $\rho_b \cap \rho_{b'} = \rho_{b \land b'}$.  By the monotonicity of $\rho$, we have one inclusion $\rho_{b \land b'} \subseteq \rho_b \cap \rho_{b'}$, i.e.\ the domain of $\rho_{b \land b'}$ is included in the domain of $\rho_b \cap \rho_{b'}$ and the two maps agree when restricted to this sublocale.  It suffices to show the converse inclusion of domains.  In terms of frame presentations, we must show that a presentation of the domain of $\rho_b \cap \rho_{b'}$ proves the defining relation $\top \vdash \nbhd{\src(b \land b')}$ of the sublocale $\lrangle{\nbhd{\src(b \land b')}} \subseteq Y$.
		
		Recall that the domain of $\rho_b \cap \rho_{b'}$ is the largest sublocale of $Y$ on which $\rho_b$ and $\rho_{b'}$ agree.  Expressed concretely, the domain of $\rho_b \cap \rho_{b'}$ is the coequalizer of the frame homomorphisms
		\[
		\begin{tikzcd}[column sep=large]
			\lrangle{\nbhd{\tgt(b)}} \cap \lrangle{\nbhd{\tgt(b')}} \ar[shift left=2]{r}{\rho_b^\ast |_{\lrangle{\nbhd{\tgt(b')}}}} \ar[shift right=2]{r}[']{\rho_{b'}^\ast |_{\lrangle{\nbhd{\tgt(b)}}}} & \lrangle{\nbhd{\src(b)}} \cap \lrangle{\nbhd{\src(b')}},
		\end{tikzcd}
		\]
		which can be expressed by adding to the presentation of $Y$ the relations
		\begin{align*}
			\top & \vdash \nbhd{\src(b)} \land \nbhd{\src(b')}, \\
			\rho_{b}^\ast(\nbhd{e}) = \nbhd{(b^{-1}eb)} \dashv & \vdash \nbhd{({b'}^{-1}eb')} = \rho_{b'}^\ast(\nbhd{e}), \\
			\rho_{b}^\ast(\eqr{a}) = \eqr{b^{-1}a} \dashv & \vdash \eqr{{b'}^{-1}a} = \rho_{b'}^\ast(\eqr{a})
		\end{align*}
		for all $e \in E(A)$ and $a \in A$.  (Here, the double sequent $\dashv \, \vdash$ means that the sequent holds in both directions.)
		
		Then, using the above relations in addition to those in \cref{df:presentation_of_Y}, we have the following derivation:
		\begin{align*}
			\top & \vdash \nbhd{\src(b)} \land \nbhd{\src(b')}, \\
			& \vdash \textstyle  \bigvee \lrset{\eqr{a} \land \eqr{a}'}{\tgt(a) \leqslant \src(b), \, \tgt(a') \leqslant \src(b')}, \\
			& \vdash \textstyle \bigvee \lrset{\nbhd{\tgt(a \land a')}}{\tgt(a) \leqslant \src(b), \, \tgt(a') \leqslant \src(b')}, \\
			& \vdash \nbhd{\tgt(b^{-1}\land {b'}^{-1})} = \nbhd{\src(b \land b')},
		\end{align*}
		completing the proof.
	\end{proof}
	\begin{lem}\label{lem:localic_rho_preserves_joins}
		The semigroup homomorphism $\rho \colon A \to \Homeo(Y)$ preserves joins of compatible subsets.
	\end{lem}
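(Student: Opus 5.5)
I will mirror the proof of \cref{lem:rho_preserves_joins}, now phrased in terms of frame presentations. Let $\sett{b_i}{i \in I} \subseteq A$ be jointly compatible with join $b = \bigvee_I b_i$. I aim to show that $\rho_b$ is the join $\bigcup_I \rho_{b_i}$ in $\Homeo(Y)$.

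First, monotonicity of $\rho$ (the analogue of \cref{coro:monotone}, which follows from \cref{prop:locale_semigroup_homo}) gives $\rho_{b_i} \leqslant \rho_b$ for every $i$. Concretely, each $\rho_{b_i}$ is the restriction of $\rho_b$ to the open sublocale $\lrangle{\nbhd{\src(b_i)}} \subseteq \lrangle{\nbhd{\src(b)}}$. Hence the family $\sett{\rho_{b_i}}{i\in I}$ is compatible in $\Homeo(Y)$. Its join is the restriction of $\rho_b$ to the open sublocale corresponding to $\bigvee_I \nbhd{\src(b_i)}$. This uses the fact that $\Homeo(Y)$ is a pseudogroup whose joins are computed by gluing on the join of the domains, as for pseudogroups of transformations.

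It therefore remains to show that the domain of $\rho_b$ is contained in this union of domains. In terms of presentations, I must derive $\top \vdash \bigvee_I \nbhd{\src(b_i)}$ from the presentation of $\lrangle{\nbhd{\src(b)}}$. By \cref{prop:basic_props}\cref{enum:src_of_join} we have $\src(b) = \bigvee_I \src(b_i)$ in $E(A)$. The relations from \cref{ex:locale_self_prstn} included in \cref{df:presentation_of_Y}\cref{enum:relations_from_EA} then give
\[
\top \vdash \nbhd{\src(b)} = \nbhd{\left(\textstyle\bigvee_I \src(b_i)\right)} \vdash \textstyle\bigvee_I \nbhd{\src(b_i)}.
\]
This is exactly the point-free replacement for complete primeness of $x$ in \cref{lem:rho_preserves_joins}.

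The main obstacle is the second step: justifying that joins in $\Homeo(Y)$ are given by gluing along joins of open sublocales. This amounts to showing that a partial homeomorphism is determined by its restrictions to an open cover of its domain. I would argue this via the frame isomorphism between $\lrangle{\nbhd{\src(b)}}$ and its lattice of open sublocales. Two frame homomorphisms out of $\lrangle{\nbhd{\tgt(b)}}$ that agree after restricting to each $\lrangle{\nbhd{\src(b_i)}}$ agree on each generator meet $\nbhd{\src(b_i)}$. Joining over $i$ and using frame distributivity, they agree outright.
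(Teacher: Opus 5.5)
Your proof is correct and follows the paper's. Monotonicity gives $\bigcup_I \rho_{b_i} \subseteq \rho_{\bigvee_I b_i}$. The reverse inclusion of domains comes from $\src(\bigvee_I b_i) = \bigvee_I \src(b_i)$ together with the fact that the open sublocale of a join is the join of the open sublocales; you derive this directly from the relation $\nbhd{(\bigvee_I a_i)} \vdash \bigvee_I \nbhd{a_i}$, where the paper instead cites Picado--Pultr. Your extra argument that joins in $\Homeo(Y)$ are computed by gluing over a cover of the domain is a point the paper takes for granted, and your distributivity argument for it is sound.
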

	\begin{proof}
		Let $\sett{b_i}{i \in I} \subseteq A$ be a jointly compatible subset.  By monotonicity, one inclusion, $\bigcup_I \rho_{b_i} \subseteq \rho_{\bigvee_I b_i}$, is automatic, and so it remains to show that the domain of $\rho_{\bigvee_I b_i}$, i.e.\ the sublocale $\lrangle{\nbhd{\src\left(\bigvee_I b_i\right)}} \subseteq Y$, is contained in the domain of $\bigcup_I \rho_{b_i}$, i.e.\ the join $\bigcup_I \lrangle{\nbhd{\src(b_i)}}$, where the join is taken in the complete lattice of sublocales of $Y$ (see \cite[\S III, Theorem 3.2.1]{picado_pultr}).  This is immediate since
			\[ \textstyle
			\lrangle{\nbhd{\src\left(\bigvee_I b_i\right)}} = \lrangle{\nbhd{\bigvee_I \src(b_i)}} = \bigcup_I \lrangle{\nbhd{\src(b_i)}},
			\]
		where the first equality follows from \cref{prop:basic_props}\cref{enum:src_of_join}, while for the second equality we use \cite[\S III, Proposition 6.1.5]{picado_pultr}.
	\end{proof}
	\begin{prop}\label{prop:q_has_section}
		The locale morphism $q \colon Y \to E(A)$ has a section $s \colon E(A) \to Y$.
	\end{prop}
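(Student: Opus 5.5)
The plan is to define the section on the level of frames: a frame homomorphism $s^\ast \colon Y \to E(A)$ out of the presented frame $Y$ with $s^\ast q^\ast = \id_{E(A)}$. Intuitively, $s$ sends a point $x$ of $E(A)$ to the pair $(x,\eqr{1})$, i.e.\ the germ of the identity at $x$. A point $x$ should lie in the open $\eqr{a}$ of this image exactly when $a$ agrees with the identity near $x$, which suggests interpreting $\eqr{a}$ as the idempotent $a \land 1$. So, by the universal property of frame presentations recalled in \cref{subsec:frame_pres}, I will define $s^\ast$ on generators by
\[
\nbhd{e} \mapsto e, \qquad \eqr{a} \mapsto a \land 1,
\]
for all $e \in E(A)$ and $a \in A$. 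Here $a \land 1$ exists by \cref{prop:consequences}, and it is idempotent since it lies below $1$.

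Once $s^\ast$ is known to be a well-defined frame homomorphism, the identity $s^\ast q^\ast(e) = s^\ast(\nbhd{e}) = e$ is immediate. Hence $s^\ast q^\ast = \id$, and therefore $q \circ s = \id_{E(A)}$ as locale morphisms. As a by-product, this also shows that $q^\ast$ is injective, as promised earlier in the paper.

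The real work is to check that $s^\ast$ respects each relation of \cref{df:presentation_of_Y}, which I will take in order.
\begin{enumerate}
\item The relations of \cref{df:presentation_of_Y}\cref{enum:relations_from_EA} are sent to exactly the corresponding identities in the frame $E(A)$, so they hold trivially.
\item The relation $\eqr{a} \land \eqr{a}' \vdash \nbhd{\tgt(a \land a')}$ becomes $(a \land 1) \land (a' \land 1) \leqslant \tgt(a \land a')$. The relation $\eqr{a} \land \nbhd{\tgt(a\land a')} \vdash \eqr{a}'$ becomes $(a \land 1) \land \tgt(a \land a') \leqslant a' \land 1$. These are precisely the bottom two inequalities of \cref{coro:ineq_for_idempotents}, which I will cite directly.
\item The relation $\nbhd{e} \vdash \bigvee_{\tgt(a) \leqslant e} \eqr{a}$ becomes $e \leqslant \bigvee_{\tgt(a) \leqslant e} (a \land 1)$. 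This holds by taking $a = e$ in the join: $\tgt(e) = e$ and $e \land 1 = e$.
\end{enumerate}

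I expect the only potentially delicate step to be the relations in \cref{df:presentation_of_Y}\cref{enum:relations_for_eqr}, since they involve non-idempotent elements. However, they have been arranged in advance to reduce exactly to \cref{coro:ineq_for_idempotents}.

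I would also remark that \cref{coro:ineq_for_idempotents} was derived via \cref{coro:part_bij_are_fin_complete}, which uses the ultrafilter lemma. If a choice-free argument is wanted, I would instead verify these two inequalities algebraically, using $a \land 1 \leqslant a$ and \cref{prop:basic_props}.
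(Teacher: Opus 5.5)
Your proposal is correct and follows the paper's proof step for step: the same assignment $\nbhd{e}\mapsto e$, $\eqr{a}\mapsto a\land 1$, the same reduction of the relations in \cref{df:presentation_of_Y}\cref{enum:relations_for_eqr} to the $\tgt$-inequalities of \cref{coro:ineq_for_idempotents}, and the same witness $a=e$ for the big-join relation. Your remark that choice enters through \cref{coro:part_bij_are_fin_complete} is a fair point (the paper says choice-free algebraic proofs exist but does not give them), though it is not needed for the statement as posed.
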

	\begin{proof}
		We define a frame homomorphism $s^\ast \colon Y \to E(A)$ on generators as follows:
		\[
		\nbhd{e} \mapsto \nbhd{e}, \qquad \eqr{a} \mapsto \nbhd{(a \land 1)}.
		\]
		It is clear that, if $s^\ast$ yields a well-defined frame homomorphism, then $s^\ast q^\ast(\nbhd{e}) = \nbhd{e}$ for all $e \in E(A)$, and so $s$ defines a section of $q$.
		
		We must therefore show that $s^\ast$ yields a well-defined frame homomorphism, i.e.\ that $s^\ast$ respects the relations of $Y$.  For those relations derived from the trivial presentation of $E(A)$, i.e.\ the relations in \cref{df:presentation_of_Y}\cref{enum:relations_from_EA}, this is immediate.  The relations described in \cref{df:presentation_of_Y}\cref{enum:relations_for_eqr}, under the action $s^\ast$, become the inequalities $(a \land 1) \land (a' \land 1) \leqslant \tgt(a \land a')$ and $(a \land 1) \land \tgt(a \land a') \leqslant (a' \land 1)$ in $E(A)$, which we know to hold via \cref{coro:ineq_for_idempotents}.
		
		Finally, since $e = (e \land 1) = \tgt(e)$ for every idempotent $e \in E(A)$, we have that $e \leqslant \bigvee_{\tgt(a) \leqslant e} (a \land 1)$, demonstrating that $s^\ast$ preserves the relation in \cref{df:presentation_of_Y}\cref{enum:relations_big_join} and thus completing the proof.
	\end{proof}
	\begin{rem}
		Intuitively, the locale morphism $s \colon E(A) \to Y$ describes, in locale-theoretic terms, the map that sends a point $x$ of $E(A)$ to the pair $(x,\eqr{1})$, i.e.\ a point of the set constructed in \cref{df:underlying_set}.
	\end{rem}
	Hence, there is a frame homomorphism $s^\ast \colon Y \to E(A)$ for which the composite $s^\ast q^\ast \colon E(A) \to E(A)$ is the identity.  It follows that $q^\ast \colon E(A) \to Y$ is injective.
	\begin{coro}\label{coro:localic_rho_preserves_strict_order}
		The semigroup homomorphism $\rho \colon A \to \Homeo(Y)$ preserves the strict natural order.
	\end{coro}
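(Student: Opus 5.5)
The plan is to mirror the proof of \cref{prop:rho_preserves_strict_ineq}, with points of $E(A)$ replaced by the injectivity of $q^\ast$ obtained from \cref{prop:q_has_section}. Suppose $a < b$ in $A$. By monotonicity (the analogue of \cref{coro:monotone} following \cref{prop:locale_semigroup_homo}) we already have $\rho_a \subseteq \rho_b$. It remains to show $\rho_a \neq \rho_b$, and we do this by showing that their domains differ as sublocales of $Y$.

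First we check that $a < b$ forces $\src(a) < \src(b)$ in $E(A)$. Indeed $a \leqslant b$ gives $a = b\,\src(a)$. So if $\src(a) = \src(b)$ then $a = b\,b^{-1}b = b$, a contradiction. Hence $\src(a) < \src(b)$.

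Next we transfer this strict inequality into $Y$. The domain of $\rho_a$ is the open sublocale $\lrangle{\nbhd{\src(a)}}$, and the domain of $\rho_b$ is $\lrangle{\nbhd{\src(b)}}$. Under the isomorphism between the frame $Y$ and its lattice of open sublocales, these correspond to the elements $q^\ast(\src(a))$ and $q^\ast(\src(b))$ of the frame presented in \cref{df:presentation_of_Y}. By \cref{prop:q_has_section}, $s^\ast q^\ast = \id$, so $q^\ast$ is injective. Since $q^\ast$ is monotone, $q^\ast(\src(a)) < q^\ast(\src(b))$. Hence $\lrangle{\nbhd{\src(a)}} \subsetneq \lrangle{\nbhd{\src(b)}}$.

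The domains therefore differ, so $\rho_a \neq \rho_b$, and together with $\rho_a \subseteq \rho_b$ this gives $\rho_a < \rho_b$.

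The only delicate point is the bookkeeping for partial homeomorphisms of a locale. We must confirm that two elements of $\Homeo(Y)$ with distinct domain sublocales are distinct, and that open sublocales $\lrangle{u}$ and $\lrangle{v}$ coincide only when $u = v$ in $Y$. Both facts follow from the cited correspondence between a frame and its open sublocales (\cite[\S II.6]{picado_pultr}). The genuine content, injectivity of $q^\ast$, has already been supplied by the section $s$.
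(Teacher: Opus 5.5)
Your proposal is correct and takes essentially the same route as the paper: injectivity of $q^\ast$, obtained from the section $s$, turns $\src(a) < \src(b)$ into a strict inclusion $\lrangle{\nbhd{\src(a)}} \subsetneq \lrangle{\nbhd{\src(b)}}$ of the domain sublocales, and hence $\rho_a < \rho_b$. The only difference is that you explicitly justify $a < b \Rightarrow \src(a) < \src(b)$ via $a = b\,\src(a)$, whereas the paper simply asserts it.
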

	\begin{proof}
		Since $q^\ast \colon E(A) \to Y$ is injective, if $e < f$ in $E(A)$, then $\nbhd{e} < \nbhd{f}$ in $Y$, and so $\lrangle{\nbhd{e}} \subsetneq \lrangle{\nbhd{f}}$ as sublocales of $Y$.  If $b < b'$ in $A$, then $\src(b) < \src(b')$.  Thus, the domain of $\rho_b$ is strictly contained in the domain of $\rho_{b'}$, and so $\rho$ preserves the strict natural order.
	\end{proof}
	\begin{coro}\label{coro:localic_rho_is_inject}
		The pseudogroup homomorphism $\rho \colon A \to \Homeo(Y)$ is injective.
	\end{coro}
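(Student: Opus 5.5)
The plan is to argue exactly as in the set-theoretic case, \cref{prop:rho_is_injective}, by appealing to \cref{lem:strict_ineq_implies_inject}. That lemma needs two inputs: the domain and codomain must be meet semigroups, and $\rho$ must be a meet semigroup homomorphism that preserves the strict natural order.

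First, $A$ is a meet semigroup by \cref{prop:consequences}\cref{enum:binary_meets_in_psdgrp}. The pseudogroup $\Homeo(Y)$ of partial homeomorphisms of the locale $Y$ also has binary meets: $\rho_b \cap \rho_{b'}$ is realised by restricting to the largest sublocale of the common domain on which the two maps agree, namely the coequalizer considered in the proof of \cref{lem:localic_rho_preserves_meets}. Second, $\rho$ preserves multiplication and binary meets by \cref{prop:locale_semigroup_homo} and \cref{lem:localic_rho_preserves_meets}. Third, $\rho$ preserves the strict natural order by \cref{coro:localic_rho_preserves_strict_order}.

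With these three facts in hand the argument is immediate. Suppose $b \neq b'$. Then, say, $b \land b' < b$. Hence
\[\rho_b \cap \rho_{b'} = \rho_{b \land b'} \subsetneq \rho_b,\]
so $\rho_b \neq \rho_{b'}$. Combined with \cref{lem:localic_rho_preserves_joins}, this also completes the proof of \cref{thm:localic_rho}.

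The main point to check is that the meet $\rho_b \cap \rho_{b'}$ used in \cref{lem:localic_rho_preserves_meets} genuinely is the meet in the natural order of $\Homeo(Y)$. This holds because any partial homeomorphism lying below both $\rho_b$ and $\rho_{b'}$ is a restriction of each of them to a common sublocale on which they agree. By the universal property of the coequalizer, that sublocale factors through the domain of $\rho_b \cap \rho_{b'}$.

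Alternatively, one can bypass \cref{lem:strict_ineq_implies_inject} entirely and argue only about meets and strict containments of domains, as in its own proof, since that is all the argument actually uses.
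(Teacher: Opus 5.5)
Your proposal is correct and is the same argument as the paper's, which simply combines \cref{coro:localic_rho_preserves_strict_order}, \cref{lem:localic_rho_preserves_meets} and \cref{lem:strict_ineq_implies_inject}. Your extra check that $\rho_b \cap \rho_{b'}$ is the genuine meet in $\Homeo(Y)$ is a detail the paper leaves implicit; it goes through here because the agreement sublocale turns out to be the open sublocale $\lrangle{\nbhd{\src(b \land b')}}$.
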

	\begin{proof}
		By combining \cref{coro:localic_rho_preserves_strict_order}, \cref{lem:localic_rho_preserves_meets} and \cref{lem:strict_ineq_implies_inject}.
	\end{proof}
	Together, \cref{prop:locale_semigroup_homo}, \cref{lem:localic_rho_preserves_meets}, \cref{lem:localic_rho_preserves_joins} and \cref{coro:localic_rho_is_inject} complete the proof of \cref{thm:localic_rho}.  Thus, even when the pseudogroup $A$ lacks enough points, there is still a sense, in the context of point-free topology, that $A$ is a `pseudogroup of transformations'.
	% References
	\printbibliography
\end{document}